%
%
%
%
%
\documentclass[12pt,a4paper]{article}

\usepackage[leqno]{amsmath}
\usepackage{amssymb,amsthm,upref,amscd}
\usepackage[T1]{fontenc}
\usepackage{times}
\usepackage{cite}
\usepackage{amsfonts}
\usepackage[colorinlistoftodos]{todonotes}
\usepackage{setspace}

\setcounter{MaxMatrixCols}{30}%
\oddsidemargin0,5cm
\evensidemargin0,5cm
\setlength{\textwidth}{15cm}
\sloppy
\usepackage{color}
\usepackage{hyperref}
\usepackage{graphicx}
\usepackage{wrapfig}
\usepackage{setspace}

\usepackage[usenames,dvipsnames]{pstricks}
\usepackage{epsfig}

\newtheorem{thm}{Theorem}[section]
\newtheorem{lem}[thm]{Lemma}

\newtheorem*{ack}{Acknowledgments}

\theoremstyle{definition}

\theoremstyle{remark}
\newtheorem*{notation}{Notation}
\theoremstyle{remark}
\newtheorem{remark}[thm]{Remark}
\numberwithin{equation}{section}

\newcommand{\C}{{\mathbb C}}

\newcommand{\R}{{\mathbb R}}

\definecolor{blue}{rgb}{0,0,1}


\newcommand{\al}{\alpha}
\newcommand{\be}{\beta}

\newcommand{\de}{\delta}
\newcommand{\la}{\lambda}

\newcommand{\De}{\Delta}

\newcommand{\vphi}{\varphi}
\newcommand{\eps}{\varepsilon}
\newcommand{\pa}{\partial}

\newcommand{\beq[1]}{\begin{equation}\label{eq:#1}}
\newcommand{\eeq}{\end{equation}}

\begin{document}

\title{Multiple positive normalized solutions for nonlinear Schr\"odinger systems}


\author{\sc{Tianxiang Gou}
\and
\sc{Louis Jeanjean}}

\date{}
\maketitle

\begin{abstract}
We consider the existence of multiple positive solutions to the nonlinear Schr\"odinger systems set on $H^1(\R^N) \times H^1(\R^N)$,
\[
\left\{
\begin{aligned}
-\De u_1 &= \la_1 u_1 + \mu_1 |u_1|^{p_1 -2}u_1
            + \be r_1 |u_1|^{r_1-2}u_1|u_2|^{r_2}, \\
-\De u_2 &= \la_2 u_2 + \mu_2 |u_2|^{p_2 -2}u_2
            +  \be r_2 |u_1|^{r_1}|u_2|^{r_2 -2}u_2,
\end{aligned}
\right.
\]
under the constraint
\[
\int_{\R^N}|u_1|^2 \, dx = a_1,\quad \int_{\R^N}|u_2|^2 \, dx = a_2.
\]
Here $a_1, a_2 >0$ are prescribed, $\mu_1, \mu_2, \beta>0$, and the frequencies $\lambda_1, \lambda_2$ are unknown and will appear as Lagrange multipliers. Two cases are studied, the first when $N \geq 1, 2 < p_1, p_2 < 2 + \frac 4N, r_1, r_2 > 1, 2 + \frac 4N < r_1 + r_2 < 2^*$, the second when $ N \geq 1, 2+ \frac 4N < p_1, p_2 < 2^*,  r_1, r_2 > 1, r_1 + r_2 < 2 + \frac 4N.$ In both cases, assuming that $\beta >0$ is sufficiently small, we prove the existence of two positive solutions. The first one is a local minimizer for which we establish the compactness of the minimizing sequences and also discuss the orbital stability of the associated standing waves. The second solution is obtained through a constrained mountain pass and a constrained linking respectively.
\end{abstract}

{\bf Keywords}: Nonlinear Schr\"odinger systems; solitary waves; variational methods; normalized solutions; constrained mountain pass.

\section{Introduction}\label{intro}

In this paper, we are concerned with the existence of normalized solutions to some nonlinear Schr\"odinger systems. More precisely for $a_1>0, a_2 >0$ given, we look for the existence of $(\lambda_1, \lambda_2, u_1, u_2) \in \R^2 \times H^1(\R^N) \times H^1(\R^N)$ satisfying
\begin{equation}\label{sys1}
\left\{
\begin{aligned}
-\De u_1 &= \la_1 u_1 + \mu_1 |u_1|^{p_1 -2}u_1
            + \be r_1 |u_1|^{r_1-2}u_1|u_2|^{r_2}, \\
-\De u_2 &= \la_2 u_2 + \mu_2 |u_2|^{p_2 -2}u_2
            + \be  r_2 |u_1|^{r_1}|u_2|^{r_2 -2}u_2,
\end{aligned}
\right.
\end{equation}
and
\begin{equation}\label{sys2}
\int_{\R^N}|u_1|^2 \, dx = a_1,\quad \int_{\R^N}|u_2|^2 \, dx = a_2.
\end{equation}
Throughout the paper we asume that $\beta >0$ and $\mu_i >0$ for $i=1,2$. Various assumptions on $N$ and $p_i,r_i$ for $i = 1,2$ will be introduced but we shall always require $r_i >1, 2 < p_i <2^*$ for $i=1,2$ and $r_1 + r_2 < 2^*$.

The study of system \eqref{sys1} where $(\lambda_1, \lambda_2)$ are given has been, starting from the { pioneering} papers \cite{ACo, BuSi, LWe, MMP, Si},  the subject of a huge {literature} in the recent years. In our situation note that since \eqref{sys2} is added to \eqref{sys1}, namely the masses $a_1,a_2$ are prescribed, the frequencies $\lambda_1, \lambda_2$ are necessarily unknown of the problem. They will appear as Lagrange multipliers.  Actually the approches to solve \eqref{sys1} with $(\lambda_1, \lambda_2)$ given or to solve \eqref{sys1}-\eqref{sys2} turn out to be quite distinct.

The problem under consideration arises from the search of normalized standing waves to the following nonlinear Schr\"odinger systems
\begin{equation}\label{sys}
\begin{cases}
- i \pa_t \Psi_1 = \De \Psi_1 + \mu_1 |\Psi_1|^{p_1-2}\Psi_1
                   +\be r_1 |\Psi_1|^{r_1-2}\Psi_1|\Psi_2|^{r_2},\\
- i \pa_t \Psi_2 = \De \Psi_2 + \mu_2 |\Psi_2|^{p_2-2}\Psi_2
                   +\be r_2|\Psi_1|^{r_1}|\Psi_2|^{r_2-2}\Psi_2,
\end{cases} \text{in $\R \times \R^N$}
\end{equation}
modeling the Bose-Einstein condensates with multiple states, or the propagation of mutually incoherent waves packets in nonlinear optics, see \cite{AA, EGBB, F, T}. By standing waves we intend solutions to \eqref{sys} of the form
\begin{align*}
\Psi_1(t,x) = e^{-i\la_1 t} u_1(x), \quad \Psi_2(t,x) = e^{-i\la_2 t} u_2(x),
\end{align*}
for $(\lambda_1, \lambda_2) \in \R^2$, and $(u_1, u_2) \in  H^1(\R^N) \times H^1(\R^N)$. Clearly $(\Psi_1, \Psi_2)$ is a standing wave of \eqref{sys} if and only if $(\lambda_1, \lambda_2, u_1, u_2)$ is a solution \eqref{sys1}. Since the mass
is conserved along the trajectories of \eqref{sys}, i.e.
\[
\int_{\R^N} |\Psi_i(t,x)|^2 \, dx = \int_{\R^N} |u_i(x)|^2 \, dx \quad  \mbox{for all } t>0,
\]
for $i =1, 2,$ the study of the existence of normalized solutions is particularly relevant from a physical point of view.

Solutions to \eqref{sys1}-\eqref{sys2} will be obtained as critical points of the energy functional $J: H^1(\R^N) \times H^1(\R^N) \mapsto \R$ defined by
$$
J(u_1,u_2)
 := \frac12\int_{\R^N} |\nabla u_1|^2 + |\nabla u_2|^2 \, dx
     - \sum_{i = 1}^2 \frac{\mu_i}{p_i}\int_{\R^N} |u_i|^{p_i} \, dx
    -\beta \int_{\R^N} |u_1|^{r_1}|u_2|^{r_2} \, dx
$$
on the constraint $S(a_1,a_2):=S(a_1) \times S(a_2)$, where for any $a>0,$
$$
S(a):= \{u \in H^1(\R^N): \int_{\R^N}|u|^2 \, dx= a\}.
$$
Under the assumption $ N \geq 1, r_i >1,  2 <p_i < 2 + \frac{4}{N}$ for $i=1,2$ and $ r_1 + r_2 < 2 + \frac{4}{N}$
it is standard to show that $J$ is bounded from below and coercive on $S(a_1,a_2)$. Then one may search for a critical point of $J$ as a global minimizer of $J$ on $S(a_1,a_2)$ and more generally study the compactness of the associated minimizing sequences. It is also well known, under the assumption that the Cauchy problem is locally well posed, that if such sequences are, up to translation, compact then the set of global minimizers is orbitally stable. In that direction there had been a good amount of works, directly on \eqref{sys1}-\eqref{sys2} or on related problems, see in particular \cite{Bh,CCWei,NWa,Oh}, and the more complete result was recently obtained in \cite{GJ}.

On the contrary, when either $p_i > 2 + \frac{4}{N}$ for some $i=1,2$ or $r_1+r_2 > 2 + \frac{4}{N}$, the functional $J$ becomes unbounded from below on $S(a_1,a_2)$. To see this let us introduce for $t>0$ and $u \in H^1(\R^N)$ the scaling $u^t(x):=t^{\frac N2}u(tx)$. A direct calculation shows that if $u \in S(a)$, then $u^t \in S(a)$. Now the claim follows, see also Lemma \ref{negative}, by considering for an arbitrary $(u_1,u_2) \in S(a_1,a_2)$ the map (i.e. the dilation)
\begin{align}\label{underscale}
\begin{split}
t \mapsto J(u_1^t, u_2^t) &=  \frac{t^2}{2} \int_{\R^N} |\nabla u_1|^2 + |\nabla u_2|^2 \, dx
            -\sum_{i =1}^2  t^{(\frac{p_i}{2}-1)N}\frac{\mu_i}{p_i}  \int_{\R^N}|u_i|^{p_i} \, dx\\
            &-\beta  t^{(\frac{r_1+r_2}{2}-1)N}\int_{\R^N} |u_1|^{r_1}|u_2|^{r_2} \, dx.
\end{split}
\end{align}
When a global minimizer { does not exist}, finding a critical point for $J$ on $S(a_1,a_2)$ is more involved. In \cite{BJ} it is assumed that $N \leq 4$ with $2 <p_1 < 2 + \frac{4}{N} < p_2 < 2^*$, $2 + \frac{4}{N}< r_1 + r_2 < 2^*$, $r_2 >2$ and, under some restrictions on $(a_1,a_2)$, a solution $(\lambda_1, \lambda_2, u_1, u_2)$ of \eqref{sys1}-\eqref{sys2} with $\lambda_i <0$ and $u_i >0$ for $i=1,2$ is found.  In \cite{BJS} still for $N \leq 4$ assuming that $2 + \frac{4}{N} <p_i < 2^*$ and $2 + \frac{4}{N}< r_1 =r_2 < 2^*$ a solution is obtained either when $\beta >0$ is sufficiently small or sufficiently large. Let us also mention the recent work \cite{BS} where, under the condition that $\beta <0$, a positive solution is obtained under quite general assumptions, see also \cite{BS2} for a multiplicity result.

Actually the search of normalized solutions for functionals which are unbounded from below on the constraint and presents a lack of compactness (typically associated with the fact that the underlying equation or system is set on all $\R^N$) is still a widely unexplored field. In the case where the associated equation is autonomous and set on $\R^N$ we refer to \cite{BJ,BJS,BS,Jeanjean,BJL,BJ2,Bv}. Let us also mention the recent papers \cite{NTV,NoTaVe,PV} dealing with situations where the equation is set on a bounded domain or when a trapping potential is acting. The difficulties and techniques introduced in these last works  differ however significantly. On one hand more compactness is available, on the other hand it is not possible to use the dilations which play an essential role in \cite{BJ,BJS,BS,Jeanjean,BJL,BJ2,Bv}.

In this paper we extend the results of \cite{BJ,BJS,GJ,BS} as to cover two new ranges of parameters. Our assumptions are
\begin{itemize}
\item[$(H_0)$] $N \geq 1,$ $2 < p_1, p_2 < 2 + \frac{4}{N},$ $r_1, r_2 >1,$ $ 2 + \frac{4}{N}  < r_1 + r_2 < 2^*$ .
\end{itemize}
\begin{itemize}
\item[$(H_1)$] $ N \geq 1,$ $2 + \frac 4N < p_1, p_2 < 2^*,$ $ r_1, r_2 >1,$ $ r_1 + r_2 < 2 + \frac 4N.$
\end{itemize}
Our aim is to prove, assuming $\beta >0$ sufficiently small, that there exists under $(H_0)$ or $(H_1)$ two distinct positive solutions to \eqref{sys1}-\eqref{sys2}, namely solutions  $(\lambda_1, \lambda_2, u_1,u_2)$ where $u_1>0$ and $u_2>0$. Up to our knowledge it is the first time a multiplicity result for positive solutions is obtained for \eqref{sys1}-\eqref{sys2} when $\beta >0$. Note however that, when $\beta <0$ the existence of infinitely many positive solutions was established in \cite{BS2}.

In order to state our results let us introduce, for $\rho >0,$
$$
\mathcal{B}(\rho):=\{(u_1, u_2) \in H^1(\R^N)\times H^1(\R^N): \int_{\R^N}|\nabla u_1|^2 + |\nabla u_2|^2 \, dx < \rho\}.
$$
We shall prove that under either $(H_0)$ or $(H_1)$ for any $\rho >0$
\begin{equation}\label{i1}
\inf J(u_1,u_2) <0 \quad \mbox{for} \quad (u_1,u_2) \in S(a_1,a_2) \cap \mathcal{B}(\rho)
\end{equation}
and that there exists a $\beta_0 = \beta_0(a_1,a_2) >0$ and $\rho_0 = \rho_0(a_1,a_2) >0$ such that
\begin{equation}\label{i2}
\inf J(u_1,u_2) >0 \quad \mbox{for} \quad (u_1,u_2) \in S(a_1,a_2) \cap \partial \mathcal{B}(\rho_0)
\end{equation}
for any $0 < \beta \leq \beta_0$.

{Together \eqref{i1} and \eqref{i2}, suggest that $J$ may have, on $S(a_1,a_2)$, a  local minimum} and thus it is natural, for $0 < \beta \leq \beta_0$, to introduce the minimization problem
\begin{equation}\label{i3}
m(a_1,a_2) := \inf J(u_1, u_2) <0  \quad \mbox{for} \quad (u_1, u_2) \in S(a_1, a_2) \cap \mathcal{B}(\rho_0).
\end{equation}
We shall prove that any minimizing sequence for \eqref{i3} is, up to translation, compact and in particular this will imply the existence of a positive solution of \eqref{sys1}-\eqref{sys2} at this energy level. To prove the compactness of the minimizing sequences, instead of trying directly to check the strict inequalities proposed by P. L. Lions \cite{Li1,Li2} we make use of the rearrangement introduced by M. Shibata \cite{Sh2} as presented in \cite[Lemma A.1]{Ik}. This was already the approach in  \cite{GJ} but here we need to adapt it to the case where the {global minimum} is replaced by a local one.  A new difficulty arises from the fact that in general the sum of two elements in $\mathcal{B}(\rho_0)$ does not belong to $\mathcal{B}(\rho_0)$ and this makes things more technical when discussing a possible dichotomy. Note that a similar difficulty was recently encountered in \cite{BBJV} but we propose here an alternative approach.

As already observed $(H_0)$ or $(H_1)$ imply that for any $(u_1,u_2) \in S(a_1,a_2)$, $J(u_1^t,u_2^t) \to - \infty$ as $t \to \infty$ and $(u_1^t,u_2^t) \notin \mathcal{B}(\rho_0)$ for $t>0$ large enough. This property along with \eqref{i1}-\eqref{i2} suggests that there { may exist} other critical points. Actually under $(H_0)$ a second positive solution will be obtained by a mountain-pass argument while under $(H_1)$ a linking type procedure, inspired by \cite{BJS}, will be used. Let us now state our main results.
\begin{thm}\label{thm1}
Let $a_1, a_2 >0$ be given and assume that $(H_0)$ holds. Then there exist $\beta_0=\beta_0(a_1, a_2) >0$ and $\rho_0 = \rho_0(a_1, a_2)>0$ such that, for any $0 < \beta \leq \beta_0$,
\begin{itemize}
\item[(i)] every minimizing sequence of \eqref{i3} is compact, up to translation, in $H^1(\R^N) \times H^1(\R^N)$. In particular,
{there exists a solution $(\lambda_1^v, \lambda_2^v,v_1, v_2)$ to \eqref{sys1}-\eqref{sys2} with $(v_1, v_2) \in \mathcal{B}(\rho_0)$ having both components positive and such that $J(v_1, v_2) <0.$}
\item[(ii)]  If $2 \leq N \leq 4$ or  $N \geq 5$ with either
         $p_i \leq r_1 + r_2 - \frac{2}{N}$, $i=1,2$ or $|p_1-p_2| \leq \frac{2}{N}$, {there exists a second solution $(\lambda_1^u, \lambda_2^u, u_1, u_2)$
to \eqref{sys1}-\eqref{sys2} with  $(u_1, u_2)$ having both components positive and such that  $J(u_1, u_2)>0$.}
\end{itemize}
{Moreover, both $\lambda_1^v, \lambda_2^v <0$ and  $\lambda_1^u, \lambda_2^u <0$  if $N \leq 4$}.
\end{thm}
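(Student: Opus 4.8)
The plan is to treat the two items of Theorem~\ref{thm1} separately, and in each case to obtain the solution via a Palais--Smale sequence at a well-chosen minimax level, for which compactness has to be recovered by hand because the system is set on all of $\R^N$.

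\medskip

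\textbf{Step 1: Geometry of $J$ on $S(a_1,a_2)$.} First I would prove the estimates \eqref{i1} and \eqref{i2}. For \eqref{i1}, given any $(u_1,u_2) \in S(a_1,a_2) \cap \mathcal{B}(\rho)$, one uses the subcritical scaling $u_i^t(x) = t^{N/2}u_i(tx)$ with $t \to 0^+$: since $2 < p_i < 2 + \tfrac4N$ for $i=1,2$ and $r_1+r_2 < 2^*$ under $(H_0)$ (resp. $r_1+r_2 < 2+\tfrac4N$ under $(H_1)$), the gradient term $\tfrac{t^2}{2}\|\nabla u\|_2^2$ decays faster than at least one of the negative terms (under $(H_0)$, the $L^{p_i}$ terms dominate; under $(H_1)$, the coupling term dominates), so $J(u_1^t,u_2^t)<0$ for $t$ small, and $(u_1^t,u_2^t) \in \mathcal{B}(\rho)$ for $t$ small since the gradient shrinks. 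For \eqref{i2}, on $\partial\mathcal{B}(\rho_0)$ one has $\|\nabla u_1\|_2^2 + \|\nabla u_2\|_2^2 = \rho_0$; by Gagliardo--Nirenberg, each $\int |u_i|^{p_i}$ is bounded by $C a_i^{(1-\gamma_i)p_i/2}\rho_0^{\gamma_i p_i/2}$ with $\gamma_i p_i/2 < 1$ under $(H_0)$ (resp. $>1$ under $(H_1)$), and $\int|u_1|^{r_1}|u_2|^{r_2} \le C a^{\cdots}\rho_0^{(r_1+r_2)/2}$ with exponent $>1$ under $(H_0)$ (resp. $<1$ under $(H_1)$). In either case one can choose $\rho_0$ small (under $(H_0)$, so the $p_i$-terms don't beat $\tfrac{\rho_0}{2}$) and then $\beta_0$ small (to control the coupling term), so that $J \ge \tfrac{\rho_0}{4} > 0$ on $\partial\mathcal{B}(\rho_0)$; the $(H_1)$ case is symmetric in the roles of the exponents. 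This is routine but must be done carefully so that $\beta_0, \rho_0$ depend only on $a_1,a_2$.

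\medskip

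\textbf{Step 2: The local minimizer (item (i)).} Here I would define $m(a_1,a_2)$ as in \eqref{i3} and take a minimizing sequence $(u_{1,n}, u_{2,n})$. By the Ekeland variational principle on the (open) set $S(a_1,a_2)\cap\mathcal{B}(\rho_0)$ one may assume it is a Palais--Smale sequence for $J|_{S(a_1,a_2)}$; it is bounded since it lies in $\mathcal{B}(\rho_0)$, and the $L^2$-masses are fixed. The Lagrange multiplier structure gives $-\Delta u_{i,n} = \lambda_{i,n}u_{i,n} + (\text{subcritical})$ with $\lambda_{i,n}$ bounded; passing to a subsequence, $\lambda_{i,n} \to \lambda_i^v$ and $(u_{1,n},u_{2,n}) \rightharpoonup (v_1,v_2)$ in $H^1 \times H^1$. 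To upgrade weak to strong convergence I would follow \cite{GJ}: symmetrize using the Shibata rearrangement from \cite[Lemma A.1]{Ik} (which preserves $L^2$-norms, does not increase $\|\nabla\cdot\|_2^2$, and does not decrease the relevant nonlinear integrals), reducing to radially symmetric decreasing sequences, and then rule out dichotomy via a strict subadditivity inequality for $m(\cdot,\cdot)$. The subadditivity argument is where the main obstacle lies: unlike the global-minimizer case, the constraint set $\mathcal{B}(\rho_0)$ is not stable under the natural splitting $(u_1,u_2) = (u_1',u_2') + (u_1'',u_2'')$, since $\|\nabla(u'+u'')\|_2^2$ can exceed $\|\nabla u'\|_2^2 + \|\nabla u''\|_2^2$. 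I would handle this by exploiting that $m(a_1,a_2)<0$ while, by \eqref{i2}, any split-off mass living near $\partial\mathcal{B}(\rho_0)$ has nonnegative energy, so the ``escaping'' piece must actually stay deep inside $\mathcal{B}(\rho_0)$; combined with a careful comparison to $m(a_1',a_2')$ for $a_i' \le a_i$ this forces no loss of mass. Once strong $H^1$-convergence holds, $(v_1,v_2) \in S(a_1,a_2)\cap\mathcal{B}(\rho_0)$ solves \eqref{sys1}--\eqref{sys2} with $J(v_1,v_2) = m(a_1,a_2) < 0$; positivity of both components follows by replacing $u_{i,n}$ with $|u_{i,n}|$ from the start (which does not increase $J$ and is still in the constraint), applying the strong maximum principle, and the sign of $\lambda_i^v$ when $N\le4$ comes from the Pohozaev/Nehari-type identities satisfied by the solution together with $J(v_1,v_2)<0$.

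\medskip

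\textbf{Step 3: The second (mountain-pass) solution, item (ii).} For the second solution under $(H_0)$ I would set up a constrained mountain pass between the local minimizer at level $m(a_1,a_2) < 0$ and a point $(v_1^T, v_2^T)$ outside $\mathcal{B}(\rho_0)$ with $J(v_1^T,v_2^T) < 0$ (obtained by the dilation $t \to \infty$, using that $r_1+r_2 > 2+\tfrac4N$ makes $J(u_1^t,u_2^t) \to -\infty$), so that the barrier \eqref{i2} of height $\ge \tfrac{\rho_0}{4}$ on $\partial\mathcal{B}(\rho_0)$ forces a positive mountain-pass level $c(a_1,a_2) \ge \tfrac{\rho_0}{4} > 0$. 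The key technical device, as in \cite{Jeanjean,BJ}, is to work on the augmented functional $\widetilde J(s,u_1,u_2) := J((u_1)^{e^s},(u_2)^{e^s})$ on $\R \times S(a_1,a_2)$, whose mountain-pass geometry is the same but which produces a Palais--Smale sequence satisfying in addition the approximate Pohozaev identity $\partial_s\widetilde J \to 0$; this extra relation controls $\|\nabla u_{i,n}\|_2$ and shows the sequence is bounded even though $J$ is unbounded below. Compactness at the level $c(a_1,a_2)$ is the hard part: the PS sequence is bounded, has bounded Lagrange multipliers, and converges weakly to some $(u_1,u_2)$, but one must exclude vanishing (impossible since $c>0$ and the weak limit would then be trivial, contradicting the approximate Pohozaev identity which forces $\|\nabla u_{i,n}\|_2$ bounded below) and dichotomy; here the technical conditions in item (ii) ($2\le N\le4$, or $N\ge5$ with $p_i \le r_1+r_2-\tfrac2N$ or $|p_1-p_2|\le\tfrac2N$) enter precisely to ensure a strict subadditivity of the mountain-pass level that rules out splitting. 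Finally $\lambda_i^u < 0$ for $N \le 4$ follows again from the Pohozaev identity combined with $c(a_1,a_2) > 0$, and positivity of the components is ensured by running the whole minimax argument with $|u_i|$ in place of $u_i$ and then invoking the maximum principle.
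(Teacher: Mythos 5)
Your part (i) follows the paper's general route (local minimization, Shibata rearrangement as in \cite{GJ,Ik}, with the annulus estimate of Lemma \ref{mpgeo1} handling the fact that $\mathcal{B}(\rho_0)$ is not stable under splitting), but the step where you ``rule out dichotomy via a strict subadditivity inequality for $m(\cdot,\cdot)$'' is a genuine gap: no such strict inequality is proved, and it is precisely what the paper avoids trying to establish. What the paper proves (Lemma \ref{another-sub-add}(iii)) is only the \emph{weak} inequality $m(a_1,a_2)\leq m(d_1,d_2)+m(a_1-d_1,a_2-d_2)$, obtained by gluing two competitors with the rearrangement $\{\cdot,\cdot\}^{\star}$ and using that $J\geq 0$ on $S(d_1,d_2)\cap[\mathcal{B}(2\rho_0)\setminus\mathcal{B}(\rho_0)]$ for all submasses. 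Strictness is then recovered only \emph{a posteriori} for the two dichotomy profiles $(v_1,v_2)$, $(w_1,w_2)$: one first shows each is a minimizer for its own masses, replaces them by their Schwarz rearrangements so that they are positive, radial, decreasing $C^2$ solutions of \eqref{sys1}, and only then does the strict gradient inequality in Lemma \ref{Ikoma}(iv) (which requires this regularity and monotonicity) give $J(v_1,v_2)+J(w_1,w_2)>J(\{\tilde v_1,\tilde w_1\}^{\star},\{\tilde v_2,\tilde w_2\}^{\star})$, hence the contradiction $m(a_1,a_2)>m(b_1,b_2)+m(a_1-b_1,a_2-b_2)$. Without this detour (or an actual proof of strict subadditivity, which is not available here) your dichotomy argument does not close.

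The larger gap is in part (ii). The minimax geometry and the augmented-functional Palais--Smale sequence with $Q(u_1^n,u_2^n)\to 0$ match the paper, but your compactness mechanism is not the right one and is not substantiated. First, the paper works in $H^1_{rad}\times H^1_{rad}$ (this is where $N\geq 2$ enters, and you never restrict to radial functions), so vanishing and dichotomy in $L^q$, $2<q<2^*$, are not the issue; the only possible loss is of $L^2$-mass in the weak limit, and one first shows $J(u_1,u_2)=\gamma(a_1,a_2)$ using $Q(u_1^n,u_2^n)\to0$, $Q(u_1,u_2)=0$ and the compact embedding. For $2\leq N\leq 4$ mass loss is excluded by the Liouville result (Lemma \ref{liouville}): if some $\lambda_i\geq 0$ then that component vanishes, forcing $J(u_1,0)=\gamma(a_1,a_2)>0$ while $Q(u_1,0)=0$ and $p_1<2+\frac4N$ force $J(u_1,0)<0$; then $\lambda_1,\lambda_2<0$ and Lemma \ref{strongconv} give strong $H^1$ convergence. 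You never invoke Liouville, and ``Pohozaev identity combined with $c>0$'' does not yield the sign of both multipliers. For $N\geq 5$ the hypotheses $p_i\leq r_1+r_2-\frac2N$ or $|p_1-p_2|\leq\frac2N$ are \emph{not} used for any ``strict subadditivity of the mountain-pass level'' (none is proved or obviously true); they enter only through Lemmas \ref{lem123}--\ref{lem124}, to show the fiber map $t\mapsto J(u_1^t,u_2^t)$ has at most two stationary points, whence $J(u_1,u_2)=\max_{t>0}J(u_1^t,u_2^t)$ at the weak limit. Compactness is then obtained by contradiction: if $\|u_i\|_2^2<a_i$ for some $i$, one adds back the missing mass through negative-energy scalar states (available by \eqref{i4} since $p_i<2+\frac4N$) glued to $(u_1,u_2)$ via the Shibata rearrangement, producing a path $g\in\Gamma$ with $\max_{t}J(g(t))<J(u_1,u_2)=\gamma(a_1,a_2)$, contradicting the minimax characterization. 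This construction, which is where the stated restrictions in (ii) actually operate, is absent from your sketch, so the $N\geq5$ case (and in fact the identification of the limit's energy) is not reached by your argument.
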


\begin{thm}\label{thm2}
Let $a_1, a_2 >0$ be given and assume that $(H_1)$ holds. Then there exist $\beta_0=\beta_0(a_1, a_2) >0$ and $\rho_0 = \rho_0(a_1, a_2)>0$ such that, for any $0 < \beta \leq \beta_0$,
\begin{itemize}
\item[(i)] {If $1 \leq N \leq 4$ or $N \geq 5$ with ${r_i \geq \big(\frac{r_1 +r_2}{2}-1\big) N}$ for $i=1,2$,} every minimizing sequence of \eqref{i3} is compact, up to translation, in $H^1(\R^N) \times H^1(\R^N)$. In particular, {there exists a solution $(\lambda_1^v, \lambda_2^v,v_1, v_2)$ to \eqref{sys1}-\eqref{sys2} with $(v_1, v_2) \in \mathcal{B}(\rho_0)$ having both components positive and such that $J(v_1, v_2) <0.$}
\item[(ii)] If $2 \leq N \leq 4$, there exists a second solution {$(\lambda_1^u, \lambda_2^u, u_1, u_2)$ to \eqref{sys1}-\eqref{sys2} with  $(u_1, u_2)$ having both components positive and such that  $J(u_1, u_2)>0$.}
\end{itemize}
{ Moreover, both  $\lambda_1^v, \lambda_2^v <0$ and  $\lambda_1^u, \lambda_2^u <0$  if $N \leq 4$}.
\end{thm}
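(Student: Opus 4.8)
The plan is to prove Theorem~\ref{thm2} in three stages: first to establish the geometry \eqref{i1}--\eqref{i2}, then to extract the first (local minimizer) solution together with the compactness statement~(i), and finally to run a linking scheme above the level \eqref{i2} for~(ii). I would begin with \eqref{i1}--\eqref{i2}. Under $(H_1)$ the exponents appearing in the dilation \eqref{underscale} satisfy $\big(\tfrac{p_i}{2}-1\big)N>2>\big(\tfrac{r_1+r_2}{2}-1\big)N>0$; hence for a fixed $(u_1,u_2)\in S(a_1,a_2)$ and $t\to0^+$ the gradient norm $t^2\big(\|\nabla u_1\|_2^2+\|\nabla u_2\|_2^2\big)$ tends to $0$, while $J(u_1^t,u_2^t)<0$ for $t$ small, the coupling term $-\be\,t^{(\frac{r_1+r_2}{2}-1)N}\int_{\R^N}|u_1|^{r_1}|u_2|^{r_2}\,dx$ dominating the other two; this gives \eqref{i1} for every $\rho>0$. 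For \eqref{i2}, a H\"older step followed by the Gagliardo--Nirenberg inequality bounds $\int_{\R^N}|u_i|^{p_i}\,dx$ by $C_i\|\nabla u_i\|_2^{(\frac{p_i}{2}-1)N}$ and $\int_{\R^N}|u_1|^{r_1}|u_2|^{r_2}\,dx$ by $C\big(\|\nabla u_1\|_2^2+\|\nabla u_2\|_2^2\big)^{\frac12(\frac{r_1+r_2}{2}-1)N}$, with $C_i,C$ depending on $a_1,a_2$; hence on $\partial\mathcal{B}(\rho_0)$ one gets $J\ge\tfrac12\rho_0-\be\,C\rho_0^{\sigma}-C'\rho_0^{\tau}$ with $\sigma:=\tfrac12\big(\tfrac{r_1+r_2}{2}-1\big)N<1$ and $\tau:=\min_{i}\tfrac12\big(\tfrac{p_i}{2}-1\big)N>1$. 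Fixing $\rho_0$ small and then $\be_0$ small makes the right-hand side positive, so $m(a_1,a_2)<0$ is well defined and any minimizing sequence for \eqref{i3} eventually lies in the interior of $\mathcal{B}(\rho_0)$.

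For~(i), I would take a minimizing sequence for \eqref{i3} and apply to it the Shibata rearrangement of \cite{Sh2}, in the form of \cite[Lemma~A.1]{Ik} used in \cite{GJ}: it preserves the masses, does not increase any $\|\nabla\cdot\|_2$ (so one stays in $\mathcal{B}(\rho_0)$), leaves each $\int|u_i|^{p_i}$ unchanged and does not decrease $\int|u_1|^{r_1}|u_2|^{r_2}$, hence yields a new minimizing sequence that is radially symmetric and nonincreasing. Passing to a weak limit $(v_1,v_2)$, the compact embedding $H^1_{rad}(\R^N)\hookrightarrow L^q(\R^N)$, $2<q<2^*$, carries every nonlinear term to the limit, and $m(a_1,a_2)<0$ excludes vanishing. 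The crucial step is to rule out loss of $L^2$-mass at infinity; I would do this through a strict subadditivity inequality $m(a_1,a_2)<m(b_1,b_2)+m(a_1-b_1,a_2-b_2)$ for the local problem, obtained with the dilation above, the new difficulty with respect to \cite{GJ} being that a split piece need not lie in $\mathcal{B}(\rho_0)$; when $N\ge5$ the hypothesis $r_i\ge\big(\tfrac{r_1+r_2}{2}-1\big)N$ is exactly what makes this scaling argument close. One then gets strong convergence, hence a minimizer $(v_1,v_2)$ lying in the interior of $\mathcal{B}(\rho_0)$, thus a constrained critical point of $J$ on $S(a_1,a_2)$ with multipliers $(\la_1^v,\la_2^v)$; the same subadditivity, fed into the usual concentration--compactness argument, gives compactness up to translation of \emph{every} minimizing sequence. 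Working from the start with $|u_i|$ in place of $u_i$ (each term of $J$ is even in each component) and invoking the strong maximum principle gives $v_i>0$, and $\la_i^v<0$ when $N\le4$ follows from a spectral argument on the linearization of \eqref{sys1} at $(v_1,v_2)$, whose essential spectrum starts at $0$.

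For~(ii), under $(H_1)$ the map $t\mapsto J(v_1^t,v_2^t)$, as well as the two partial dilations $t\mapsto J(v_1^t,v_2)$ and $t\mapsto J(v_1,v_2^t)$, tends to $-\infty$ and eventually leaves $\mathcal{B}(\rho_0)$, so there are two essentially independent directions along which $J$ is unbounded below. Following the linking scheme of \cite{BJS}, I would work on the augmented space obtained by adjoining the two dilation parameters and build a linking between the barrier $S(a_1,a_2)\cap\partial\mathcal{B}(\rho_0)$, on which $J\ge\de>0$ by \eqref{i2}, and a suitable two-parameter family built from $(v_1,v_2)$ whose boundary is pushed far below $0$; a topological degree argument then produces a min--max value $c\ge\de>0$, while evaluating $J$ along the family gives $c<\infty$, and since $c>0>m(a_1,a_2)$ the solution found at level $c$ is automatically distinct from $(v_1,v_2)$. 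The augmented min--max simultaneously yields a Palais--Smale sequence $(u_{1,n},u_{2,n})$ for $J$ on $S(a_1,a_2)$ at level $c$ along which $P(u_{1,n},u_{2,n})\to0$, where
\[
P(u_1,u_2):=\|\nabla u_1\|_2^2+\|\nabla u_2\|_2^2-N\sum_{i=1}^2\mu_i\Big(\tfrac12-\tfrac1{p_i}\Big)\|u_i\|_{p_i}^{p_i}-N\be\Big(\tfrac{r_1+r_2}{2}-1\Big)\int_{\R^N}|u_1|^{r_1}|u_2|^{r_2}\,dx .
\]
Combining $J\to c$ and $P\to0$ bounds the sequence in $H^1(\R^N)\times H^1(\R^N)$. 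I would then pass to a weak limit solving \eqref{sys1} with some multipliers, and here the restriction $2\le N\le4$ is used — both to obtain the negativity of the multipliers (the spectral argument above) and to carry out the compactness analysis — upgrading to strong convergence so that \eqref{sys2} holds as well; positivity of both components follows as in~(i).

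The step I expect to be the genuine obstacle is the compactness analysis, in both parts: in~(i), excluding the escape of $L^2$-mass when the natural subadditivity splitting can push one piece out of the admissible ball $\mathcal{B}(\rho_0)$ — a phenomenon absent from the global-minimum setting of \cite{GJ}; and in~(ii), recovering strong convergence of the linking Palais--Smale sequence on the whole of $\R^N$, which relies on the strict negativity of the Lagrange multipliers and is precisely where the restriction $N\le4$ is forced.
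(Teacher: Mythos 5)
Your overall architecture (local geometry, rearrangement-based compactness, a linking above the level \eqref{i2}) matches the paper's, but at the two decisive points your mechanism is not the one that works, and the gaps are genuine. For part (i): under $(H_1)$ the functional restricted to a single component is never negative on small balls, i.e.\ $m(d,0)=m(0,d)=0$ (Lemma \ref{negative} requires \emph{both} masses nonzero under $(H_1)$), so the subadditivity $m(a_1,a_2)\le m(b_1,b_2)+m(a_1-b_1,a_2-b_2)$ cannot be made strict when the escaping piece carries mass in only one component; consequently a ``strict subadditivity obtained with the dilation'' cannot rule out loss of $L^2$-mass, and your assertion that for $N\ge5$ the hypothesis $r_i\ge\big(\tfrac{r_1+r_2}{2}-1\big)N$ ``is exactly what makes this scaling argument close'' misplaces where that hypothesis acts. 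In the paper the dichotomy is excluded (Lemma \ref{conv1}) by the Shibata rearrangement applied to the two limit profiles, with the annulus estimate \eqref{l32} ($J\ge0$ on $\mathcal{B}(2\rho_0)\setminus\mathcal{B}(\rho_0)$ for all submasses) resolving precisely the difficulty you flag but do not solve, namely that a recombined pair may leave $\mathcal{B}(\rho_0)$; this only yields $L^p$-compactness, $2<p<2^*$. The full masses are then recovered not by subadditivity but by the signs of the Lagrange multipliers: Lemma \ref{la} gives, say, $\la_1<0$, and if $\la_2\ge0$ one gets a contradiction for $1\le N\le4$ via the Liouville Lemma \ref{liouville} (forcing $v_2=0$, hence $J(v_1,0)>0$ against $J\le m(a_1,a_2)<0$), while for $N\ge5$ one tests the second equation with $v_2$ and combines with the Pohozaev identity $Q(v_1,v_2)=0$ (see \eqref{test-lagrange}--\eqref{v}); it is exactly there that $r_2\ge\big(\tfrac{r_1+r_2}{2}-1\big)N$ is used, together with a smaller choice of $\rho_0$ in Lemma \ref{mpgeo1}. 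Your appeal to a ``spectral argument'' for $\la_i^v<0$ is not a proof, and in this problem the multiplier signs are the engine of the mass recovery, not a final remark.

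For part (ii) the gap is of the same nature: a linking whose barrier is $S(a_1,a_2)\cap\partial\mathcal{B}(\rho_0)$ only gives a level $c\ge\delta>0$, and this is not enough to conclude. After the Liouville step ($\la_2\ge0\Rightarrow u_2=0$, $Q(u_1,0)=0$) the weak limit could be the semitrivial pair $(u_1,0)$, whose energy $l(N,a_1,\mu_1,p_1)$ is itself positive under $(H_1)$, so $c>0$ does not exclude it; nor does $N\le4$ by itself give $\la_2<0$, contrary to what you assert. The paper's linking is built instead on the product family $(t_1*w_1,t_2*w_2)$ of dilated scalar ground states $w_{a_i,\mu_i+\beta,p_i}$, linked with the product of Pohozaev manifolds, and the smallness of $\beta$ (condition \eqref{defb0}) is used to force the min--max level $c(a_1,a_2)$ strictly above $\max\{l(N,a_1,\mu_1,p_1),\,l(N,a_2,\mu_2,p_2)\}$, see \eqref{gam}; this strict level comparison is what rules out the semitrivial limit, yields $\la_2<0$, hence $u_2\in S(a_2)$ and the strong convergence. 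Without an analogue of this level separation (and it is unclear how to produce one from a family built on the local minimizer $(v_1,v_2)$ and a barrier at $\partial\mathcal{B}(\rho_0)$), your scheme does not close.
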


\begin{remark} $ $
i) The value of $\beta_0 >0$ in Theorems \ref{thm1} and \ref{thm2} can be explicitly computed in terms of  $N,p_i,a_i,r_i$. Our results are not perturbative. In addition for any $\beta >0$ we can assume that $\beta \leq \beta_0$ at the expense of taking $a_1 >0$ and $a_2 >0$ sufficiently small because $\beta_0(a_1, a_2) \to \infty$ as $a_1, a_2 \to 0$, for more details, see Lemma \ref{mpgeo1}.

ii) The existence of the solution in Theorem \ref{thm1}-\ref{thm2} $(ii)$ is under the condition $N \geq 2$.  This is because we search it in the {radially symmetric} space $H_{rad}^1(\R^N) \times H^1_{rad}(\R^N)$ where the compact embedding $H^1_{rad}(\R^N)\hookrightarrow L^p(\R^N)$ for $2 <p < 2^*$ holds only when $N \geq 2$.

iii)  We conjecture that Theorem \ref{thm1} (ii) is true assuming only $(H_0)$ and we refer to Remark \ref{extension} for a discussion in that direction.
\end{remark}

\begin{remark}\label{symetry}
{When $N \geq 2,$ by  Schwarz-symmetrization arguments it is possible to obtain the additional property that solutions obtained in Theorems \ref{thm1} and \ref{thm2} are radially symmetric and decreasing with respect to a common point. More generally it could be of interest to know if any positive solution to \eqref{sys1}-\eqref{sys2} has this property.  Assuming that $\lambda_1$ and $\lambda_2$ are strictly negative and that  $r_1, r_2 \geq 2$  this likely follows from \cite[Theorem 1]{BuSi}. However the general case seems to be open.}
\end{remark}

\begin{remark}\label{perturbative}
{ By Theorem \ref{thm1}, for any $0 < \beta < \beta_0$, there exists a solution $(\lambda_1^{\beta}, \lambda_2^{\beta}, v_1^{\beta},v_2^{\beta})$  to \eqref{sys1}-\eqref{sys2} where $(v_1^{\beta},v_2^{\beta})$ is a local minimizer to \eqref{i3}. Adapting some arguments of the proof of Theorem \ref{thm1} it can be shown that $(v_1^{\beta},v_2^{\beta})$ converges towards
$(v_1^0, v_2^0)$ in $H^1(\R^N) \times H^1(\R^N)$ as $\beta \to 0^+$. Furthermore, for $i=1,2$, $v_i^0$ satisfies  the equation
    \begin{align} \label{equ1}
    -\Delta v_i^0 =\lambda_i^0 v_i^0 + |v_i^0|^{p_i-2} v_i^0.
    \end{align}
for some $\lambda_i^0 <0$. Thus the existence result of Theorem \ref{thm1} could likely be obtained as a consequence of the Implicit Function Theorem. However, as already noted, Theorem \ref{thm1} is not perturbative and also our second solution cannot, of course, be obtained in this way. Indeed, as $\beta \to 0^+$ it is readily checked that the norm to $(u_1^{\beta},u_2^{\beta})$ goes to infinity. The same comments  hold concerning Theorem \ref{thm2} but this time reversing the  role of the solutions $(\lambda_1^{v}, \lambda_2^{v}, v_1,v_2)$ and $(\lambda_1^{u}, \lambda_2^{u}, u_1,u_2).$}
\end{remark}

The proofs of Theorems \ref{thm1}(ii) and \ref{thm2}(ii) follows the general strategy developed in the papers \cite{BJ,BJS,BS} which all deal with the search of constrained critical points which are not minimizers. First one needs to identify a possible critical level. This is done by introducing a minimax structure of mountain pass type when $(H_0)$ and of linking type under $(H_1)$. Secondly one has to show that there exists a bounded Palais-Smale sequence, say $\{(u_1^n,u_2^n)\} \subset S(a_1,a_2)$ at this energy level. This step relies, as in \cite{BJ,BJS,BS}, on the presence of a natural constraint of Pohozaev type on which the functional is coercive. To take advantage of this constraint we rely here on the approach, first introduced in \cite{Jeanjean}, which consists in the addition of an artificial variable directly within the variational procedure. It is now a standard technique in problems where dilatations are allowed. At this point one can assume that $(u_1^n,u_2^n) \rightharpoonup (u_1,u_2)$ weakly in $H^1(\R^N) \times H^1(\R^N)$. The last step consists in showing the strong convergence. The key point being the convergence in $L^2(\R^N) \times L^2(\R^N) $ since we need ($u_1,u_2) \in S(a_1,a_2).$ It is this step which induces the main limitations in \cite{BJ,BJS,BS}. To insure the strong convergence one relies on the use of { a Liouville's type result}, see Lemma \ref{liouville}, which only applies when $N \leq 4$. Also the proofs in \cite{BJ,BJS,BS} use the property that the scalar problem
\begin{align}\label{eqw0}
-\Delta w - \lambda w = \mu |w|^{p-2}w,
\end{align}
has a unique positive solution $u \in S(a)$ for $2 <p<2^*, \mu>0$.

In this paper we start to relax  these two restrictions. Theorem \ref{thm1} allows to consider the case $N \geq 5$ and no direct uniqueness assumption is imposed on \eqref{eqw0}. Our second solution is found through a mountain-pass argument, more precisely we first prove that, for any $0 < \beta < \beta_0$, there exists a $ 0 < \bar{\rho} = \bar{\rho}(a_1,a_2) < \rho_0$ such that
\begin{align*}
\gamma(a_1, a_2): = \inf_{g \in \Gamma} \max_{t \in [0, 1]} J(g(t))> \max \{J(g(0)), J(g(1))\},
\end{align*}
where
\begin{align*}
\Gamma := \{g \in C([0, 1], S(a_1,a_2)): g(0) \in \mathcal{B}(\bar{\rho}) , g(1) \notin \overline{{\mathcal{B}(\rho_0)}} \mbox{ with } J(g(1)) < 0\}.
\end{align*}
Having obtained a bounded Palais-Smale sequence at the level $\gamma(a_1,a_2)$ we denote by $(u_1,u_2)$ its weak limit. An appropriate choice of the Palais-Smale sequence insures that
\begin{align}\label{iden}
J(u_1, u_2) \leq \gamma(a_1,a_2).
\end{align}
When $N \leq 4$ the fact that $(u_1, u_2) \in S(a_1,a_2)$ is directly obtained by the Liouville's argument. When $N \geq 5$  we argue by contradiction. If
either $\bar{a_1}:= ||u_1||_2^2 < a_1$ or $\bar{a_2}:= ||u_2||_2^2 < a_2$ we manage to construct a path $g \in \Gamma$ on which the maximum of $J$ is strictly below $J(u_1,u_2)$. By the characterization of $\gamma(a_1,a_2)$ we thus get
$$
\gamma(a_1, a_2) \leq \max_{0 \leq t \leq 1}J(g(t)) < J(u_1, u_2),
$$
in contradiction with \eqref{iden}. The construction of this path $g \in \Gamma$ relies on the property that for
$2 < p < 2 + \frac 4N$
\begin{equation}\label{i4}
 \inf_{u \in S(a)}I(u) <0, \quad \mbox{where} \quad I(u):= \frac 12 \int_{\R^N}|\nabla u|^2 \,dx - \frac{\mu}{p} \int_{\R^N}|u|^p \, dx.
\end{equation}
Starting from \eqref{iden} and under the assumptions of Theorem \ref{thm1}(ii), we first construct a path $\overline{g}$ lying in $S(\bar{a_1},\bar{a_2})$ on which the maximum of $J$  is below $\gamma(a_1,a_2)$. Then thanks to \eqref{i4} we transform $\overline{g}$ into a path $g \in \Gamma$ which satisfies
$$\max_{t \in [0,1]}J(g(t)) < \max_{t \in [0,1]}J(\bar{g}(t)) \leq J(u_1,u_2)$$
by ``adding some masses" somehow in the spirit of \cite{JS} but using here again the rearrangement results of \cite{Sh2}.

In Theorem \ref{thm2} (ii), to look for a second solution, we establish a linking structure for $J$ restricted to the constraint.  Since $p_i > 2 + \frac{4}{N}$ for $i=1,2$, \eqref{i4} does not hold and our proof relies as in \cite{BJ, BJS, BS} on the Liouville argument inducing the restriction $N \leq 4$. However in Theorem \ref{thm2} (i) we manage to consider situations where $N \geq 5$ at the expense of a restriction on the range of $r_1$ and $r_2$.

We now set
$$
G(a_1,a_2) := \{(u_1, u_2) \in S(a_1, a_2) \cap \mathcal{B}(\rho_0): J(u_1, u_2) = m(a_1, a_2)\}.
$$
Note that under the assumptions $(H_0)$ or $(H_1)$  it is unknown if \eqref{sys} is locally well posed. The point being that when $1 <r_i < 2$ for  $i=1,2$ the interaction part is not { Lipschitz}
continuous and in particular the uniqueness may fail. For a general discussion in that direction we refer to \cite{NgTiDeSh}. As a consequence our last result which states the orbital stability  of the set of standing waves associated to $G(a_1,a_2)$ is only valid {under the condition that the local existence of the Cauchy problem to \eqref{sys} holds}. Since its proof, having at hand the compactness of the minimizing sequences, relies on the classical arguments of \cite{CL}, we do not provide it.

\begin{thm}\label{th:sta}
{Assume that the local existence of the Cauchy problem in \eqref{sys} holds. Then under the assumptions of Theorems \ref{thm1}(i) and \ref{thm2}(i)} the set $G(a_1, a_2)$ is orbitally stable, i.e. for any $\eps>0$, there exists $\de>0$
so that if the initial condition $(\psi_1(0), \psi_2(0))$ in system \eqref{sys} satisfies
\begin{equation*}
\inf_{(u_1, u_2)\in G(a_1, a_2)}
\|(\psi_1(0), \psi_2(0)) - (u_1,u_2)\|\leq \de,
\end{equation*}
then
\begin{equation*}
\sup_{t \geq 0} \inf_{(u_1, u_2) \in G(a_1, a_2)}
\|(\psi_1(t), \psi_2(t)) - (u_1, u_2)\| \leq \eps,
\end{equation*}
where $(\psi_1(t), \psi_2(t))$ is the solution of system \eqref{sys}
corresponding to the initial condition $(\psi_1(0), \psi_2(0))$ and $\|\cdot\|$ denotes the norm in Sobolev space $H^1(\R^N) \times H^1(\R^N)$.
\end{thm}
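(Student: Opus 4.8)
The statement to prove is Theorem~\ref{th:sta}, the orbital stability of $G(a_1,a_2)$, which by the authors' own admission follows the classical scheme of Cazenave–Lions~\cite{CL} once the compactness of minimizing sequences (Theorems~\ref{thm1}(i) and \ref{thm2}(i)) is in hand. Let me sketch how I would carry out that argument.

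\medskip

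\noindent\textbf{Plan of proof.} The plan is to argue by contradiction. First I would record the two facts that make the scheme work: $(a)$ $J$ and the two mass functionals $u\mapsto \|u_i\|_2^2$ are conserved along the flow of \eqref{sys} (on the time interval of existence), and $(b)$ by Theorems~\ref{thm1}(i)/\ref{thm2}(i) every minimizing sequence for $m(a_1,a_2)$ in $S(a_1,a_2)\cap \mathcal{B}(\rho_0)$ is, up to translation, relatively compact in $H^1(\R^N)\times H^1(\R^N)$, and in particular $G(a_1,a_2)\neq\emptyset$ and is compact up to translations. A preliminary point requiring a word of care, since $\mathcal{B}(\rho_0)$ is only an open ball in the gradient norm and not flow-invariant, is that a trajectory starting sufficiently $H^1$-close to $G(a_1,a_2)$ cannot leave $\mathcal{B}(\rho_0)$ in finite time: this is exactly where \eqref{i2} is used. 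Indeed, on $S(a_1,a_2)\cap\partial\mathcal{B}(\rho_0)$ one has $J>0$, while $m(a_1,a_2)<0$ by \eqref{i3}; so by continuity in $t$ of $\|(\psi_1(t),\psi_2(t))\|$ and conservation of $J$, a trajectory with $J(\psi_1(0),\psi_2(0))$ close to $m(a_1,a_2)<0$ and $(\psi_1(0),\psi_2(0))\in\mathcal{B}(\rho_0)$ stays trapped inside $\mathcal{B}(\rho_0)$ for all $t$ in its existence interval; combined with the conservation laws this also gives an a priori $H^1$-bound, hence the solution is global in time.

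\medskip

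\noindent\textbf{The contradiction argument.} Suppose the conclusion fails. Then there exist $\eps_0>0$, a sequence of initial data $(\psi_1^n(0),\psi_2^n(0))$ with
\[
\inf_{(u_1,u_2)\in G(a_1,a_2)}\|(\psi_1^n(0),\psi_2^n(0))-(u_1,u_2)\|\to 0,
\]
and times $t_n\ge 0$ such that
\[
\inf_{(u_1,u_2)\in G(a_1,a_2)}\|(\psi_1^n(t_n),\psi_2^n(t_n))-(u_1,u_2)\|>\eps_0.
\]
By the continuity of $(u_1,u_2)\mapsto\big(\|u_i\|_2^2, J(u_1,u_2)\big)$ one may, replacing $(\psi_1^n(0),\psi_2^n(0))$ by its normalization, arrange that $(\psi_1^n(0),\psi_2^n(0))\in S(a_1,a_2)$ with $J(\psi_1^n(0),\psi_2^n(0))\to m(a_1,a_2)$, at the cost of an $o(1)$ error in the $H^1$ distance; by the trapping step above each trajectory stays in $\mathcal{B}(\rho_0)$ and is global. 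Then set $w^n:=(\psi_1^n(t_n),\psi_2^n(t_n))$. By conservation of mass and energy, $w^n\in S(a_1,a_2)\cap\mathcal{B}(\rho_0)$ and $J(w^n)\to m(a_1,a_2)$, i.e. $\{w^n\}$ is a minimizing sequence for \eqref{i3}. Applying Theorem~\ref{thm1}(i) (resp.~\ref{thm2}(i)) there exist $y_n\in\R^N$ such that $w^n(\cdot-y_n)\to(u_1^*,u_2^*)$ strongly in $H^1(\R^N)\times H^1(\R^N)$ for some $(u_1^*,u_2^*)\in G(a_1,a_2)$. But $G(a_1,a_2)$ is invariant under translations, so $(u_1^*(\cdot+y_n),u_2^*(\cdot+y_n))\in G(a_1,a_2)$ and
\[
\inf_{(u_1,u_2)\in G(a_1,a_2)}\|w^n-(u_1,u_2)\|\le \|w^n-(u_1^*(\cdot+y_n),u_2^*(\cdot+y_n))\|=\|w^n(\cdot-y_n)-(u_1^*,u_2^*)\|\to 0,
\]
contradicting the choice of $t_n$. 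Hence the stability assertion holds.

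\medskip

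\noindent\textbf{Main obstacle.} With the compactness theorems granted, there is no genuinely hard analytic step left: the only places needing attention are $(i)$ the use of the local well-posedness hypothesis and the conservation laws to make sense of the flow and to propagate masses and energy — which is why the statement is conditional on local existence — and $(ii)$ the verification that trajectories starting near $G(a_1,a_2)$ remain inside the ball $\mathcal{B}(\rho_0)$ so that $\{w^n\}$ is an admissible minimizing sequence for the \emph{local} problem \eqref{i3} rather than a free minimizing sequence; this is the one genuinely new wrinkle compared with the Cazenave–Lions setting and is handled precisely by the strict sign separation $m(a_1,a_2)<0<\inf_{S(a_1,a_2)\cap\partial\mathcal{B}(\rho_0)}J$ from \eqref{i2}–\eqref{i3}. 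Since all of this is routine given the earlier results, I would, as the authors do, omit the details and simply refer to \cite{CL}.
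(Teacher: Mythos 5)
The paper gives no written proof of Theorem \ref{th:sta}: the authors state that, once the compactness of minimizing sequences (Theorems \ref{thm1}(i) and \ref{thm2}(i)) is available, the result follows from the classical argument of \cite{CL}, and your proposal is exactly that argument, including the one genuinely new ingredient required by the local-minimization setting, namely the trapping of trajectories inside $\mathcal{B}(\rho_0)$ via conservation of $J$ and the energy barrier \eqref{i2} coming from Lemma \ref{mpgeo1}. So in substance you are doing what the paper intends.

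One step should nevertheless be repaired. In the contradiction argument you ``replace $(\psi_1^n(0),\psi_2^n(0))$ by its normalization'' so as to put the data exactly on $S(a_1,a_2)$. The contradiction hypothesis (distance of the trajectory from $G(a_1,a_2)$ exceeding $\eps_0$ at time $t_n$) concerns the trajectories emanating from the \emph{original} data, and since no uniform-in-time continuous dependence on the data is available (only local well-posedness is assumed), this hypothesis does not transfer to the trajectories of the normalized data; as written the step is not justified. The standard fix stays entirely within your scheme: keep the original data, note by continuity that $\|\psi_i^n(0)\|_2^2 \to a_i$ and $J(\psi_1^n(0),\psi_2^n(0)) \to m(a_1,a_2)$, use conservation together with the barrier of Lemma \ref{mpgeo1} (stated for masses $d_i \leq a_i$, and extending by continuity of the constants $K_j$ to masses slightly above $a_i$) to trap the trajectories in $\mathcal{B}(\rho_0)$ and conclude global existence, and only at the end rescale $w^n := (\psi_1^n(t_n),\psi_2^n(t_n))$ to $\tilde w^n \in S(a_1,a_2)$. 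Since $\|\tilde w^n - w^n\| \to 0$ and $J(\tilde w^n) \to m(a_1,a_2) < 0$ while $J \geq 0$ on $S(a_1,a_2) \cap [\mathcal{B}(2\rho_0)\setminus\mathcal{B}(\rho_0)]$, the sequence $\tilde w^n$ is eventually an admissible minimizing sequence for \eqref{i3}, and Theorems \ref{thm1}(i)/\ref{thm2}(i) together with the translation invariance of $G(a_1,a_2)$ yield the contradiction exactly as you wrote it.
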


Let us {point out} that we do know situations where the local existence holds. For example when $N=1$ and $r_1=r_2 :=r$ with $2 \leq r < 3$, see \cite{NgTiDeSh}.

Our Theorem \ref{th:sta} is a contribution to the recently studied question of proving that a local minimizer which is not global minimizer and for  a nonlinearity which is $L^2$-supercritical may lead to an orbitally stable standing wave. In that direction we are just aware of \cite{BBJV,BJ2}.

The paper is organized as follows. In Section \ref{pre}, we establish some preliminaries. Section \ref{pro1} is devoted to the proof of Theorems \ref{thm1}(i) and \ref{thm2}(i). In Section \ref{pro2} we give the proofs of Theorems \ref{thm1}(ii) and \ref{thm2}(ii). Finally in an appendix we establish a key technical result, Lemma \ref{lem124}.

\begin{ack}
{\rm This work has been carried out in the framework of the Project NONLOCAL (ANR-14-CE25-0013), funded by the French National Research Agency (ANR), T. Gou is supported by the China Scholarship Council.}
\end{ack}

\begin{notation}
In this paper we denote for any $1\leq p<\infty,$ by $L^p(\R^N)$  the usual Lebesgue space with norm
$\|u\|_p^p := \int_{\R^N}|u|^p\,dx,$
and by $H^1(\R^N)$ the usual Sobolev space endowed with the norm
$\|u\|^2 := \int_{\R^N}|\nabla u|^2+|u|^2 \, dx.$
We denote by $'\rightarrow'$ and $'\rightharpoonup'$ strong convergence and weak convergence, respectively.
\end{notation}


\section{Preliminary results}\label{pre}
First of all, observe that the energy functional $J$ is well-defined in $H^1(\R^N) \times H^1(\R^N)$ thanks to the H\"{o}lder inequality,
$$
\int_{\R^N} |u_1|^{r_1}|u_2|^{r_2} \, dx
   \le \|u_1\|_{r_1q}^{r_1}\|u_2\|_{r_2q'}^{r_2}
   < \infty,
$$
for some $1 < q < 2^*, q' =\frac{q}{q-1}$ with $2 \leq r_1 q, r_2 q' \leq 2^*$. Recalling the Gagliardo-Nirenberg inequality, for $u \in  H^1(\R^N)$, $2 \leq p \leq 2^* $,
\begin{align}\label{ga1}
\|u\|_p \le C(N,p)\|\nabla u\|_2^{\al(p)} \|u\|_2^{1-\al(p)}\quad
 \text{where } \al(p)=\frac{N(p-2)}{2p},
\end{align}
we get for $(u_1, u_2) \in S(a_1, a_2)$,
\begin{align} \label{ga2}
\begin{split}
\int_{\R^N} |u_1|^{r_1}|u_2|^{r_2} \, dx &
  \le \|u_1\|_{r_1q}^{r_1}\|u_2\|_{r_2q'}^{r_2} \\
 & \le C a_1^{\frac{(1- \al(r_1q))r_1}{2}}a_2^{\frac{(1- \al(r_2q))r_2}{2}}\|\nabla u_1\|_2^{\frac{N(r_1q-2)}{2q}}
  \|\nabla u_2\|_2^{\frac{N(r_2q'-2)}{2q'}}
\end{split}
\end{align}
with $C=C(N,r_1,r_2,q)$.

We now recall the rearrangement introduced by Shibata \cite{Sh2} as presented in \cite{Ik}. Let $u$ be a Borel measurable function on $\R^N$. It is said to vanish at infinity if the level set $|\{x \in \R^N: |u(x)|>t\}|< \infty$ for every $t>0$.
Here $|A|$ stands for the $N$-dimensional Lebesgue measure of a Lebesgue mesurable  set $A \subset \R^N$.
Considering two Borel mesurable functions $u,v$ which vanish at infinity in $\R^N$, we define for $t>0$,
$A^{\star}(u, v; t):= \{x \in \R^N: |x|<r\}$ where $r>0$ is chosen so that
$$
|B(0, r)| = |\{x \in \R^N: |u(x)|>t\}| + |\{x \in \R^N: |v(x)|>t\}|,
$$
and $\{u, v\}^{\star}$ by
\begin{equation}\label{rearr}
\{u, v\}^{\star}(x):= \int_{0}^{\infty} \chi_{A^{\star}(u, v; t)} (x) \, dt,
\end{equation}
where $\chi_A(x)$ is a characteristic function of the set $A \subset \R^N$.

\begin{lem} \cite [Lemma A.1]{Ik} \label{Ikoma}
\begin{itemize}
\item[(i)] The function $\{u, v\}^{\star}$ is radially symmetric, non-increasing and lower semi-continuous.
           Moreover, for each $t > 0$ there holds $\{ x \in \R^N : \{u, v\}^{\star} > t\} = A^{\star}(u, v; t)$.
\item[(ii)] Let $\Phi : [0, \infty) \rightarrow [0, \infty)$ be non-decreasing, lower semi-continuous, continuous at $0$
           and $\Phi(0) = 0$. Then $\{\Phi(u),\Phi(v)\}^{\star} = \Phi(\{u, v\}^{\star})$.
\item[(iii)] $\|\{u, v\}^{\star}\|_p^p = \|u\|_p^p + \|v\|_p^p$ \, for $ 1 \leq p < \infty$.
\item[(iv)] If $u, v \in H^1(\R^N)$, then $\{u, v\}^{\star} \in  H^1(\R^N)$
            and $\|\nabla \{u ,v\}^{\star}\|_2^2 \leq \|\nabla u\|_2^2 + \|\nabla v\|_2^2$.
            In addition, if $u, v \in (H^1(\R^N) \cap C^1(\R^N)) \setminus \{0\}$ are radially symmetric, positive and non-increasing, then
            $$
            \int_{\R^N} |\nabla\{u ,v\}^{\star}|^2 \, dx < \int_{\R^N} |\nabla u|^2 + \int_{\R^N} |\nabla v|^2\, dx.
            $$
\item[(v)] Let $u_1, u_2, v_1, v_2 \geq 0$ be Borel measurable functions which vanish at infinity, then
            $$
            \int_{\R^N} (u_1u_2 + v_1v_2) \,dx \leq \int_{\R^N} \{u_1, v_1\}^{\star}\{u_2, v_2\}^{\star} \, dx.
            $$
\end{itemize}
\end{lem}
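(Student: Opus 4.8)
The plan is to reduce every assertion to the behaviour of distribution functions. For a function $w$ vanishing at infinity write $\mu_w(t):=|\{x\in\R^N:|w(x)|>t\}|$. By construction $A^\star(u,v;t)$ is the open ball centred at the origin with $|A^\star(u,v;t)|=\mu_u(t)+\mu_v(t)$; call its radius $r(t)$. Since $t\mapsto\mu_u(t)+\mu_v(t)$ is non-increasing and right-continuous, so is $t\mapsto r(t)$. This single fact will drive (i), (ii), (iii) and (v).

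For (i), from \eqref{rearr} one computes $\{u,v\}^\star(x)=|\{t>0:|x|<r(t)\}|=\sup\{t>0:|x|<r(t)\}$, the set of such $t$ being an interval $(0,\cdot)$ by monotonicity of $r$. This expression is visibly radially symmetric and non-increasing; its super-level set $\{\{u,v\}^\star>t\}=\bigcup_{s>t}B(0,r(s))$ is open, giving lower semicontinuity, and right-continuity of $r$ identifies this union with $B(0,r(t))=A^\star(u,v;t)$. For (ii), with the generalized inverse $\Phi^{-1}(t):=\sup\{s\ge0:\Phi(s)\le t\}$ one has $\{|\Phi(w)|>t\}=\{|w|>\Phi^{-1}(t)\}$ up to null sets for a.e. $t>0$, hence $A^\star(\Phi(u),\Phi(v);t)=A^\star(u,v;\Phi^{-1}(t))$; substituting into \eqref{rearr} and changing variables gives $\{\Phi(u),\Phi(v)\}^\star=\Phi(\{u,v\}^\star)$, the hypotheses on $\Phi$ at $0$ handling the limiting behaviour. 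For (iii), the layer-cake formula $\|w\|_p^p=p\int_0^\infty t^{p-1}\mu_w(t)\,dt$ together with $\mu_{\{u,v\}^\star}(t)=|A^\star(u,v;t)|=\mu_u(t)+\mu_v(t)$ from (i) yields $\|\{u,v\}^\star\|_p^p=\|u\|_p^p+\|v\|_p^p$. For (v), expand each product by layer-cake, $\int_{\R^N}u_1u_2\,dx=\int_0^\infty\!\int_0^\infty|\{u_1>s\}\cap\{u_2>t\}|\,ds\,dt$ and likewise for $v_1v_2$ and for the right-hand side (where the super-level sets of the two rearrangements are concentric balls by (i)). Since $|\{u_1>s\}\cap\{u_2>t\}|\le\min(\mu_{u_1}(s),\mu_{u_2}(t))$, similarly for $v$, $|B\cap B'|=\min(|B|,|B'|)$ for concentric balls, and $\min(a,b)+\min(c,d)\le\min(a+c,b+d)$, the integrands compare pointwise in $(s,t)$ and the inequality follows.

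The substantial point is (iv). The plan is to identify $\{u,v\}^\star$ with the Schwarz symmetrization onto $\R^N$ of the function $w:=u\sqcup v$ on the disjoint union $M:=\R^N\sqcup\R^N$: both have distribution function $\mu_u+\mu_v$. Since $M$ is a disjoint union of two copies of Euclidean space, a set $E=E_1\sqcup E_2\subset M$ satisfies, for a dimensional constant $c_N$, $\mathrm{Per}_M(E)=\mathrm{Per}(E_1)+\mathrm{Per}(E_2)\ge c_N\big(|E_1|^{\frac{N-1}{N}}+|E_2|^{\frac{N-1}{N}}\big)\ge c_N|E|^{\frac{N-1}{N}}$, so the isoperimetric profile of $M$ dominates that of $\R^N$. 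The Pólya–Szegő inequality (proved as usual by smooth approximation and the coarea formula, using exactly this domination) then gives $\{u,v\}^\star\in H^1(\R^N)$ and $\|\nabla\{u,v\}^\star\|_2^2\le\|\nabla w\|_{L^2(M)}^2=\|\nabla u\|_2^2+\|\nabla v\|_2^2$. For the strict inequality when $u,v\in(H^1(\R^N)\cap C^1(\R^N))\setminus\{0\}$ are radial, positive and non-increasing: for every $t$ in a set of positive measure both $\{|u|>t\}$ and $\{|v|>t\}$ are non-empty balls, so the corresponding super-level set $E$ in $M$ is a genuine disjoint union of two balls, whose perimeter $\mathrm{Per}(E_1)+\mathrm{Per}(E_2)$ is strictly larger than that of the single ball of the same total volume (strictly, since $(a+b)^{\frac{N-1}{N}}<a^{\frac{N-1}{N}}+b^{\frac{N-1}{N}}$ for $a,b>0$). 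Propagating this strict deficit through the coarea identity upgrades Pólya–Szegő to a strict inequality.

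I expect (iv), and within it the equality-case/strictness analysis, to be the only genuine obstacle; (i)–(iii) and (v) are bookkeeping with distribution and super-level functions.
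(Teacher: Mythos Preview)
The paper does not prove this lemma; it is quoted verbatim from Ikoma \cite[Lemma~A.1]{Ik} and used as a black box. So there is no ``paper's own proof'' to compare against. Your sketch is a correct self-contained argument and follows what is essentially the standard route: reduce (i)--(iii) and (v) to identities and inequalities for distribution functions via the layer-cake representation, and obtain (iv) by interpreting $\{u,v\}^\star$ as the Schwarz rearrangement onto $\R^N$ of $u\sqcup v$ on $\R^N\sqcup\R^N$, then invoking P\'olya--Szeg\H{o} through the isoperimetric comparison $\mathrm{Per}(E_1)+\mathrm{Per}(E_2)\ge c_N(|E_1|+|E_2|)^{(N-1)/N}$.

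Two small points. In (ii), since $\Phi$ is non-decreasing and lower semicontinuous it is left-continuous, and with $\Phi^{-1}(t):=\inf\{s\ge 0:\Phi(s)>t\}$ (which coincides with your $\sup\{s:\Phi(s)\le t\}$) one has the exact set identity $\{\Phi(w)>t\}=\{w>\Phi^{-1}(t)\}$; the hedge ``up to null sets for a.e.\ $t$'' is unnecessary and slightly obscures that (ii) is a pointwise equality of functions. In (iv), propagating the strict perimeter deficit through the coarea/Cauchy--Schwarz argument to a strict P\'olya--Szeg\H{o} inequality requires that the relevant level sets carry nonzero gradient (so that $-\mu'(t)$ is finite and the Cauchy--Schwarz step is nondegenerate); under the stated $C^1$, radial, positive, non-increasing hypothesis this is routine, but it is worth saying explicitly where the $C^1$ regularity is used.
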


As an application of Lemma \ref{Ikoma}, we obtain the following result.

\begin{lem} \label{Ikoma1}
 Assume $r_1, r_2>1,$ and $r_1+r_2 < 2^*$. Let $u_1, u_2, v_1, v_2$ be Borel measurable functions which vanish at infinity, then
$$
   \int_{\R^N}|u_1|^{r_1}|u_2|^{r_2} + |v_1|^{r_1}|v_2|^{r_2} \, dx
   \leq \int_{\R^N}\left(\{|u_1|, |v_1|\}^{\ast} \right)^{r_1} \left(\{|u_2|, |v_2|\}^{\ast}\right)^{r_2} \, dx.
$$
\end{lem}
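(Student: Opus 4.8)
The plan is to apply Lemma \ref{Ikoma} part (v) with a suitable choice of the four functions and a suitable exponentiation. The key observation is that for a product $|u_1|^{r_1}|u_2|^{r_2}$ with $r_1, r_2 > 1$, one wants to write it as a genuine product $f_1 f_2$ of two nonnegative functions, apply the rearrangement inequality of Lemma \ref{Ikoma}(v), and then use the ``commutation with composition'' property Lemma \ref{Ikoma}(ii) to pull the powers back out. Concretely, set $\Phi_i(s) := s^{r_i}$ for $i=1,2$; these are non-decreasing, continuous (hence lower semi-continuous) on $[0,\infty)$, with $\Phi_i(0)=0$, so Lemma \ref{Ikoma}(ii) applies.

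First I would write, using Lemma \ref{Ikoma}(v) applied to the nonnegative functions $|u_1|^{r_1}, |u_2|^{r_2}, |v_1|^{r_1}, |v_2|^{r_2}$ (each of which vanishes at infinity because $u_i, v_i$ do — raising to a positive power only shrinks superlevel sets),
\[
\int_{\R^N} |u_1|^{r_1}|u_2|^{r_2} + |v_1|^{r_1}|v_2|^{r_2} \, dx
\leq \int_{\R^N} \{|u_1|^{r_1}, |v_1|^{r_1}\}^{\star}\,\{|u_2|^{r_2}, |v_2|^{r_2}\}^{\star} \, dx.
\]
Then, by Lemma \ref{Ikoma}(ii) with $\Phi = \Phi_i$,
\[
\{|u_i|^{r_i}, |v_i|^{r_i}\}^{\star} = \{\Phi_i(|u_i|), \Phi_i(|v_i|)\}^{\star} = \Phi_i\big(\{|u_i|, |v_i|\}^{\star}\big) = \big(\{|u_i|, |v_i|\}^{\star}\big)^{r_i}
\]
for $i=1,2$. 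Substituting this identity into the previous inequality gives exactly the claimed bound. The hypotheses $r_i > 1$ and $r_1 + r_2 < 2^*$ are not actually needed for this pointwise/integral inequality per se — they are the standing assumptions ensuring all the integrals are finite and that the rearrangement framework is the relevant one — so I would simply carry them along.

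The only point requiring a small amount of care, and the one I would single out as the main (minor) obstacle, is checking the hypotheses of Lemma \ref{Ikoma}(v) and (ii) for the composed functions: namely that $|u_i|^{r_i}$ still vanishes at infinity (immediate, since $\{|u_i|^{r_i} > t\} = \{|u_i| > t^{1/r_i}\}$ has finite measure), and that $\Phi_i(s) = s^{r_i}$ satisfies the regularity hypotheses of part (ii), including continuity at $0$ — true since $r_i > 0$. There is no genuine analytic difficulty here; the lemma is a clean corollary of Lemma \ref{Ikoma}, and the proof is essentially the two displayed lines above together with these verifications.
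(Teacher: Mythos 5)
Your proof is correct, and it follows the same skeleton as the paper's: apply Lemma \ref{Ikoma}(v) to the nonnegative functions $|u_1|^{r_1}, |u_2|^{r_2}, |v_1|^{r_1}, |v_2|^{r_2}$ and then commute the powers with the rearrangement, i.e.\ use the identity $\{|u_i|^{r_i},|v_i|^{r_i}\}^{\star} = \bigl(\{|u_i|,|v_i|\}^{\star}\bigr)^{r_i}$. The only difference is how that identity is justified: you obtain it at once from Lemma \ref{Ikoma}(ii) with $\Phi_i(s)=s^{r_i}$ (checking the hypotheses, which indeed hold for any $r_i>0$, and noting that $|u_i|^{r_i}$ still vanishes at infinity), whereas the paper does not invoke (ii) here but instead verifies the identity by hand from the definition \eqref{rearr}, via the substitution $t \mapsto t^{r}$ in the layer-cake integral and the level-set description in Lemma \ref{Ikoma}(i). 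Your route is a legitimate and slightly shorter shortcut — the paper's computation is in effect a self-contained proof of exactly the special case of (ii) that you cite — and your remark that the hypotheses $r_i>1$, $r_1+r_2<2^*$ play no role in the inequality itself is consistent with the paper, whose argument likewise only uses $r>0$ (through monotonicity of $s\mapsto s^{1/r}$ and finiteness of the level sets).
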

\begin{proof}
In view of the property $(v)$ in Lemma \ref{Ikoma}, to prove the lemma it suffices to show that
$$
\left(\{|u|, |v|\}^{\ast} \right)^{r} (x) =\{|u|^r, |v|^r\}^{\ast} (x), \,\, \mbox{for any} \,\, x \in \R^N,
$$
where $r>1$, and $u, v$ are Borel measurable functions which vanish at infinity. For any fixed $x \in \R^N$, in view of the definition \eqref{rearr}, by making the change of variable and using the property $(i)$ in Lemma \ref{Ikoma}, we have that
\begin{align*}
\{|u|^r, |v|^r\}^{\ast}(x)&=\int_{0}^{\infty} \chi_{A^{\star}(|u|^r, |v|^r; t)} (x) \, dt
                          =\int_{0}^{\infty} \chi_{A^{\star}(|u|, |v|; t^{\frac{1}{r}})} (x) \, dt \\
                          &=r \int_{0}^{\infty} \chi_{A^{\star}(|u|, |v|; t)} (x) t^{r-1} \, dt \\
                          &= r \int_{0}^{\infty} \chi_{\{y \in \R^N: \{|u|, |v|\}^{\ast}(y) >t\}} (x)  t^{r-1} \, dt \\
                          &= r \int_{0}^{\{|u|, |v|\}^{\ast}(x)} t^{r-1} \, dt \\
                          &=\left(\{|u|, |v|\}^{\ast} \right)^{r}(x).
\end{align*}
This  ends the proof.
\end{proof}

\begin{lem}\cite[Lemma A.2]{Ik} \label{liouville}
Suppose $ p \in ]1,\frac{N}{N-2}]$ when $N \geq 3$ and $p \in]1,\infty[$ when $N=1,2$. Let $u\in L^p(\R^N)$ be a smooth nonnegative function satisfying $-\De u \ge 0$ in $\R^N$. Then $u \equiv 0$ .
\end{lem}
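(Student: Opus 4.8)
The plan is to exploit the superharmonicity of $u$ together with the integrability $u\in L^p(\R^N)$: first show by a cut-off argument that $u$ is in fact harmonic, and then conclude directly from the mean value property that $u\equiv 0$.

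\smallskip
\noindent\emph{Step 1 (a cut-off estimate).} Fix $\varphi\in C_c^\infty(\R^N)$ radial, non-increasing, with $0\le\varphi\le1$, $\varphi\equiv1$ on $B(0,1)$ and $\varphi\equiv0$ outside $B(0,2)$, and set $\varphi_R(x):=\varphi(x/R)$; then $\varphi_R\equiv1$ on $B(0,R)$, $\Delta\varphi_R$ is supported in $B(0,2R)\setminus B(0,R)$ and $\|\Delta\varphi_R\|_\infty\le CR^{-2}$. Testing $-\Delta u\ge0$ against the nonnegative function $\varphi_R$ and integrating by parts twice (legitimate since $\varphi_R$ has compact support and $u$ is smooth) gives
\[
0\le\int_{\R^N}(-\Delta u)\,\varphi_R\,dx=-\int_{\R^N}u\,\Delta\varphi_R\,dx\le\frac{C}{R^2}\int_{B(0,2R)\setminus B(0,R)}u\,dx .
\]
By H\"older's inequality the last integral is at most $\big(\int_{|x|>R}u^p\,dx\big)^{1/p}|B(0,2R)|^{1-1/p}\le C\varepsilon_R R^{N(1-1/p)}$, where $\varepsilon_R:=\big(\int_{|x|>R}u^p\,dx\big)^{1/p}\to0$ as $R\to\infty$ because $u\in L^p(\R^N)$. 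Hence
\[
0\le\int_{\R^N}(-\Delta u)\,\varphi_R\,dx\le C\,\varepsilon_R\,R^{\,N-2-N/p}.
\]

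\smallskip
\noindent\emph{Step 2 ($u$ is harmonic, then $u\equiv0$).} The exponent $N-2-N/p$ is $\le0$: for $N\ge3$ this is exactly the hypothesis $p\le N/(N-2)$, and for $N=1,2$ it holds for every finite $p>1$. Therefore the right-hand side above tends to $0$ as $R\to\infty$, so $\int_{\R^N}(-\Delta u)\,\varphi_R\,dx\to0$; since $\varphi_R\ge\chi_{B(0,R')}$ for $R\ge R'$, this forces $\int_{B(0,R')}(-\Delta u)\,dx=0$ for every $R'$, hence $\int_{\R^N}(-\Delta u)\,dx=0$. As $-\Delta u$ is continuous and nonnegative, $-\Delta u\equiv0$, i.e.\ $u$ is harmonic on $\R^N$. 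Now for any $x\in\R^N$ the mean value property and H\"older's inequality give $u(x)=|B(x,R)|^{-1}\int_{B(x,R)}u\,dy\le\|u\|_p\,|B(x,R)|^{-1/p}\to0$ as $R\to\infty$, whence $u(x)=0$ and $u\equiv0$.

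\smallskip
The only subtle point is the endpoint exponent $p=N/(N-2)$ for $N\ge3$: there the crude bound $\int_{B(0,2R)\setminus B(0,R)}u\,dx\le C\|u\|_p R^{N(1-1/p)}$ is merely $O(1)$, which is why in Step 1 one must keep the vanishing tail $\varepsilon_R$ in place of the full norm $\|u\|_p$. (An alternative, more potential-theoretic route would be to work with the spherical averages of $u$, which are non-increasing by superharmonicity, deduce from $u\in L^p$ via Jensen's inequality that they tend to $0$, and compare $u$ with the Newtonian potential of $-\Delta u$; the restriction on $p$ enters there because $|x|^{2-N}\notin L^p$ near infinity precisely when $p\le N/(N-2)$.) Everything else is routine.
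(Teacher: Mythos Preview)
The paper does not supply a proof of this lemma at all: it is simply quoted as \cite[Lemma A.2]{Ik}, so there is no ``paper's own proof'' to compare against. Your argument is correct and self-contained. The cut-off estimate in Step~1 is clean, and your handling of the endpoint $p=N/(N-2)$ by keeping the tail term $\varepsilon_R=\big(\int_{|x|>R}u^p\big)^{1/p}$ rather than the full $\|u\|_p$ is exactly the right refinement. Step~2 correctly deduces harmonicity from the nonnegativity of $-\Delta u$ and the vanishing of its integral over every ball, and the conclusion via the mean value property is standard. One very minor point of presentation: in the inequality $-\int u\,\Delta\varphi_R\le CR^{-2}\int_{B(0,2R)\setminus B(0,R)}u$, you are implicitly using $u\ge0$ to pass to $|\Delta\varphi_R|$; it would do no harm to say so explicitly.
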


\begin{lem}\label{Lieb}
Assume $r_1, r_2 > 1, r_1 + r_2 \leq 2^* .$ If $(u_1^n, u_2^n)\rightharpoonup (u_1, u_2)$ in $H^1(\R^N)\times H^1(\R^N),$
then up to a subsequence,
$$
\int_{\R^N}|u_1^n|^{r_1}|u_2^n|^{r_2} - |u_1^n-u_1|^{r_1}|u_2^n-u_2|^{r_2} \, dx
= \int_{\R^N}|u_1|^{r_1}|u_2|^{r_2}\,dx+o(1).
$$
\end{lem}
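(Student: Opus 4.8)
This is a Brezis–Lieb type splitting result for the mixed interaction term $|u_1|^{r_1}|u_2|^{r_2}$, and the plan is to mimic the classical Brezis–Lieb lemma argument. First I would record the pointwise convergence: passing to a subsequence, $u_1^n \to u_1$ and $u_2^n \to u_2$ a.e.\ in $\R^N$ (using the compact embedding $H^1 \hookrightarrow L^q_{loc}$ for $2 \le q < 2^*$ and a diagonal argument). Set $w_i^n := u_i^n - u_i$, so $w_i^n \to 0$ a.e. The goal is then to show
$$
\int_{\R^N} \Big( |u_1^n|^{r_1}|u_2^n|^{r_2} - |w_1^n|^{r_1}|w_2^n|^{r_2} - |u_1|^{r_1}|u_2|^{r_2}\Big)\,dx \to 0.
$$
The pointwise integrand tends to $0$ a.e.\ (since $w_i^n \to 0$ a.e.\ and $u_i^n = w_i^n + u_i \to u_i$ a.e.), so by a Vitali / generalized dominated convergence argument it suffices to control the integrand by a convergent-in-$L^1$ sequence, or equivalently to establish uniform integrability.

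Second, the key elementary inequality. For any $\eps > 0$ there is $C_\eps > 0$ such that for all $a,b,c,d \in \R$,
$$
\big| |a+c|^{r_1}|b+d|^{r_2} - |c|^{r_1}|d|^{r_2} \big| \le \eps\big(|c|^{r_1}|d|^{r_2}\big) + C_\eps\big(|a|^{r_1}|b|^{r_2} + |a|^{r_1}|d|^{r_2} + |c|^{r_1}|b|^{r_2}\big),
$$
or a variant thereof; one derives this from the single-variable fact $||x+y|^r - |x|^r| \le \eps|x|^r + C_\eps|y|^r$ applied successively in each slot, together with Young's inequality to absorb cross terms of the form $|a|^{\theta}|c|^{r_1-\theta}$ into pure powers. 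Applying this with $a = w_1^n$, $c = u_1$, $b = w_2^n$, $d = u_2$, one bounds
$$
f_\eps^n := \Big( \big| |u_1^n|^{r_1}|u_2^n|^{r_2} - |w_1^n|^{r_1}|w_2^n|^{r_2} - |u_1|^{r_1}|u_2|^{r_2}\big| - \eps |w_1^n|^{r_1}|w_2^n|^{r_2}\Big)^+
$$
by a fixed $L^1$ function (up to the $C_\eps$ factor) plus terms involving the fixed functions $u_1, u_2$ and the bounded sequences $w_i^n$, which are uniformly integrable by the Hölder/Gagliardo–Nirenberg estimate \eqref{ga2} applied to $w_i^n$ (the sequences $\{w_i^n\}$ are bounded in $H^1$, hence in every $L^s$, $2 \le s \le 2^*$, and the mixed term $|w_1^n|^{r_1}|w_2^n|^{r_2}$ is bounded in $L^1$ since $r_1 + r_2 \le 2^*$). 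Since $f_\eps^n \to 0$ a.e.\ and is dominated, $\int f_\eps^n \to 0$; letting $\eps \to 0$ and using $\sup_n \int |w_1^n|^{r_1}|w_2^n|^{r_2}\,dx < \infty$ gives the claim.

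Third, the main obstacle. The delicate point, as always in Brezis–Lieb arguments, is producing a single dominating $L^1$-function for the remainder after subtracting the $\eps|w_1^n|^{r_1}|w_2^n|^{r_2}$ term — here complicated by the product structure in two independent variables, which forces one to handle cross terms like $|w_1^n|^{r_1}|u_2|^{r_2}$ and $|u_1|^{r_1}|w_2^n|^{r_2}$ (and fractional-power mixed terms arising from expanding $|w_i^n + u_i|^{r_i}$). One needs Young's inequality with carefully chosen exponents so that every such term either is a fixed $L^1$ function times a bounded-in-$L^1$ factor, or can be absorbed into $\eps|w_1^n|^{r_1}|w_2^n|^{r_2} + (\text{fixed }L^1)$. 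The condition $r_1 + r_2 \le 2^*$ (with $r_i > 1$) is exactly what guarantees all the relevant $L^s$ norms are finite via \eqref{ga2}; one should check that the Hölder pairing $r_1 q, r_2 q' \in [2, 2^*]$ used right after \eqref{ga2} also accommodates the cross terms. A cleaner alternative worth mentioning: apply the scalar Brezis–Lieb lemma twice — once in $L^{r_1 q}$ to $u_1^n$ and once in $L^{r_2 q'}$ to $u_2^n$ — and combine with weak $L^{q}$–$L^{q'}$ convergence of $|u_1^n|^{r_1}$ and $|u_2^n|^{r_2}$; but the product coupling still requires the elementary inequality above, so I expect the direct Vitali approach to be the most transparent route.
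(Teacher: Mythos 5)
Your strategy is sound, but it is worth knowing that the paper disposes of this lemma in one line: it invokes the general Brezis--Lieb theorem, \cite[Theorem 2]{BrLie}, applied to the function $j(s+it):=|s|^{r_1}|t|^{r_2}$ acting on the complex-valued sequence $u_1^n+iu_2^n$. What you propose is, in effect, a self-contained reproof of that cited theorem in this special case: the a.e.\ convergence after passing to a subsequence, the $\eps$-perturbed elementary inequality, the truncation $f_\eps^n$, dominated convergence, and then $\eps\to 0$ using a uniform $L^1$ bound on the subtracted term is exactly the Brezis--Lieb mechanism, and the hypotheses of their Theorem 2 are verified by precisely the kind of Young-inequality bookkeeping you describe. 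So the mathematics is the same; your route buys self-containedness at the cost of writing out the elementary inequality, while the paper's buys brevity by outsourcing it.

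One detail in your write-up needs correcting before it closes: with $w_i^n:=u_i^n-u_i$, the truncation must subtract $\eps\bigl(|w_1^n|^{r_1+r_2}+|w_2^n|^{r_1+r_2}\bigr)$, not $\eps\,|w_1^n|^{r_1}|w_2^n|^{r_2}$ as in your definition of $f_\eps^n$. The cross terms $|w_1^n|^{r_1}|u_2|^{r_2}$ and $|u_1|^{r_1}|w_2^n|^{r_2}$ cannot be absorbed into the mixed product $\eps|w_1^n|^{r_1}|w_2^n|^{r_2}$ plus a fixed $L^1$ function (take $w_2^n\equiv 0$ and $|w_1^n|$ large to see this), so with your $f_\eps^n$ the $n$-independent $L^1$ domination fails. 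Absorbing into pure powers via $ab\le \eps a^{(r_1+r_2)/r_1}+C_\eps b^{(r_1+r_2)/r_2}$ — which you do gesture at — repairs this: one gets
\begin{equation*}
\bigl| |w_1^n+u_1|^{r_1}|w_2^n+u_2|^{r_2}-|w_1^n|^{r_1}|w_2^n|^{r_2}-|u_1|^{r_1}|u_2|^{r_2}\bigr|
\le \eps\bigl(|w_1^n|^{r_1+r_2}+|w_2^n|^{r_1+r_2}\bigr)+C_\eps\bigl(|u_1|^{r_1+r_2}+|u_2|^{r_1+r_2}\bigr),
\end{equation*}
and the right-hand correction term is a fixed $L^1(\R^N)$ function since $r_1+r_2\le 2^*$, while $\sup_n\int_{\R^N}|w_1^n|^{r_1+r_2}+|w_2^n|^{r_1+r_2}\,dx<\infty$ by the $H^1$-bound and Sobolev embedding; your final limit $\eps\to 0$ then goes through verbatim with this quantity in place of $\sup_n\int|w_1^n|^{r_1}|w_2^n|^{r_2}\,dx$.
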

\begin{proof}
{{This result is a direct consequence of \cite[Theorem 2]{BrLie} when using $j:\C \to \C$ defined by  $j(s+it):=|s|^{r_1}|t|^{r_2}$, for $s, t \in \R$, in that theorem. Here  $i$ is the imaginary unit.}}
\end{proof}

\begin{lem}\label{negative}
For any $b_1, b_2 \geq 0$ with $(b_1,b_2) \neq (0,0)$ if $(H_0)$ holds and $b_1 \neq 0$, $b_2 \neq 0$ if $(H_1)$ holds, we have for any $\rho >0$,
$$
\inf J(u_1,u_2) <0 \quad \mbox{on the set} \quad (u_1,u_2) \in S(b_1,b_2) \cap \mathcal{B}(\rho).
$$
\end{lem}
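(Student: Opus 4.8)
The plan is to exploit the scaling $u^t(x) = t^{N/2} u(tx)$, which preserves the $L^2$-norm, and to show that along this family the energy becomes negative for small $t$ while the gradient term simultaneously becomes small, so that we land inside $\mathcal{B}(\rho)$. Fix $(u_1,u_2) \in S(b_1,b_2)$ with, in the case $(H_0)$, at least one component nonzero and, in the case $(H_1)$, both components nonzero (if $b_i = 0$ under $(H_0)$ one simply takes the corresponding component to be $0$, and then $\mathcal{B}(\rho)$ still makes sense on the product space; if $b_i > 0$ one picks any fixed nonzero representative). Recalling \eqref{underscale},
\[
J(u_1^t, u_2^t) = \frac{t^2}{2}\int_{\R^N} |\nabla u_1|^2 + |\nabla u_2|^2 \, dx
 - \sum_{i=1}^2 t^{(\frac{p_i}{2}-1)N}\frac{\mu_i}{p_i}\int_{\R^N}|u_i|^{p_i}\,dx
 - \beta\, t^{(\frac{r_1+r_2}{2}-1)N}\int_{\R^N}|u_1|^{r_1}|u_2|^{r_2}\,dx.
\]
The gradient term is $O(t^2)$ as $t \to 0^+$. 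Thus it suffices to find a term on the right that, for small $t$, dominates $t^2$ with a negative sign.

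Under $(H_0)$ we have $2 < p_i < 2 + \tfrac 4N$, hence $(\tfrac{p_i}{2}-1)N < 2$, and for any $i$ with $b_i \neq 0$ (there is at least one) the coefficient $\frac{\mu_i}{p_i}\int |u_i|^{p_i}\,dx$ is strictly positive; the interaction term, being nonnegative with exponent $(\tfrac{r_1+r_2}{2}-1)N$ which may be larger or smaller than $2$, only helps or is lower order and in any case does not change the sign conclusion since $\beta > 0$ makes $-\beta(\cdots) \le 0$. Therefore $J(u_1^t,u_2^t) \le \frac{t^2}{2}\|\nabla u_1\|_2^2 + \frac{t^2}{2}\|\nabla u_2\|_2^2 - c\, t^{(\frac{p_i}{2}-1)N}$ with $c > 0$ and exponent $< 2$, so $J(u_1^t,u_2^t) < 0$ for all $t$ small. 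Under $(H_1)$ the subcritical term is instead the interaction: $r_1 + r_2 < 2 + \tfrac 4N$ gives $(\tfrac{r_1+r_2}{2}-1)N < 2$, and since both components are nonzero, $\int |u_1|^{r_1}|u_2|^{r_2}\,dx > 0$; the pure-power terms now have exponents $(\tfrac{p_i}{2}-1)N > 2$, hence are higher order in $t$ and negative, so again $J(u_1^t,u_2^t) \le \frac{t^2}{2}(\|\nabla u_1\|_2^2 + \|\nabla u_2\|_2^2) - \beta\, c'\, t^{(\frac{r_1+r_2}{2}-1)N} < 0$ for $t$ small.

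Finally, since $\|\nabla u_i^t\|_2^2 = t^2 \|\nabla u_i\|_2^2$, we have $\|\nabla u_1^t\|_2^2 + \|\nabla u_2^t\|_2^2 = t^2(\|\nabla u_1\|_2^2 + \|\nabla u_2\|_2^2) \to 0$ as $t \to 0^+$, so for $t$ small enough $(u_1^t, u_2^t) \in \mathcal{B}(\rho)$ as well. Choosing $t$ small enough to satisfy both conditions simultaneously gives a point of $S(b_1,b_2) \cap \mathcal{B}(\rho)$ at which $J < 0$, which proves the claim. The only genuinely delicate point is bookkeeping the competition of exponents: one must check in each regime that the negative term with the smallest exponent indeed has exponent strictly less than $2$ and a strictly positive coefficient, and that the remaining terms are either negative or of strictly higher order; this is where the hypotheses $(H_0)$ and $(H_1)$, together with the sign requirements on $b_1, b_2$, are used in an essential way.
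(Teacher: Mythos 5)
Your proof is correct and is essentially the paper's own argument: the paper's proof consists exactly of considering the dilation map \eqref{underscale} and letting $t \to 0^+$, using that $(\tfrac{p_i}{2}-1)N<2$ under $(H_0)$ and $(\tfrac{r_1+r_2}{2}-1)N<2$ under $(H_1)$, with membership in $\mathcal{B}(\rho)$ automatic because $\|\nabla u_i^t\|_2^2 = t^2\|\nabla u_i\|_2^2$. One small caution: under $(H_1)$ the implication ``$u_1\neq 0$ and $u_2\neq 0$ imply $\int_{\R^N}|u_1|^{r_1}|u_2|^{r_2}\,dx>0$'' is false for pairs with disjoint supports, so when choosing your test pair in $S(b_1,b_2)$ take, say, two everywhere positive representatives; since only one such pair is needed to bound the infimum, this does not affect the conclusion.
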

\begin{proof}
Consider for any ($u_1,u_2) \in S(b_1,b_2)$ the map introduced in \eqref{underscale}. {{Observing that $(\frac{p_i}{2}-1)N < 2,$ $i=1,2$ if $(H_0)$ holds and that $(\frac{r_1+r_2}{2}-1)N < 2$ if $(H_1)$ holds, then the lemma follows directly by letting $t \to 0^+$}}.
\end{proof}

Our next {result}, which is borrowed from N. Ikoma  \cite[Lemma 2.2]{Ik}, shows that when considering minimizing sequences to \eqref{i3} it is not restrictive to assume that the two components are {non-negative}.
\begin{lem} \label{nonnegative}
Assume that $\{(v_1^n, v_2^n)\}$ is a minimizing sequence to \eqref{i3}. If $\{(|v_1^n|, |v_2^n|)\}$ is compact in $H^1(\R^N) \times H^1(\R^N)$, so is $\{(v_1^n, v_2^n)\}$.
\end{lem}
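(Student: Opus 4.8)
The plan is to reduce the statement to a comparison of energies via the rearrangement of Lemma~\ref{Ikoma}. Let $\{(v_1^n,v_2^n)\}$ be a minimizing sequence for \eqref{i3}, so $(v_1^n,v_2^n)\in S(a_1,a_2)\cap\mathcal{B}(\rho_0)$ and $J(v_1^n,v_2^n)\to m(a_1,a_2)$. Apply the rearrangement pointwise in each component: set $w_1^n:=\{|v_1^n|,0\}^{\star}$ and $w_2^n:=\{|v_2^n|,0\}^{\star}$ — more to the point, we only need the single-function Schwarz-type rearrangement, but to stay within the framework of Lemma~\ref{Ikoma} one can simply take the second argument to be $0$. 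By Lemma~\ref{Ikoma}(iii), $\|w_i^n\|_2^2=\||v_i^n|\|_2^2=a_i$, so $(w_1^n,w_2^n)\in S(a_1,a_2)$; by Lemma~\ref{Ikoma}(iv), $\|\nabla w_i^n\|_2^2\le\|\nabla|v_i^n|\|_2^2\le\|\nabla v_i^n\|_2^2$, hence $(w_1^n,w_2^n)\in\mathcal{B}(\rho_0)$. Moreover Lemma~\ref{Ikoma}(iii) applied to $|v_i^n|^{p_i}$ gives $\|w_i^n\|_{p_i}^{p_i}=\||v_i^n|\|_{p_i}^{p_i}=\|v_i^n\|_{p_i}^{p_i}$, and Lemma~\ref{Ikoma1} (or rather its proof, specialized with one null argument, which reduces to the standard fact that Schwarz symmetrization preserves $L^{r_1}\!\cdot L^{r_2}$ products since it is monotone) gives $\int_{\R^N}w_1^{n\,r_1}w_2^{n\,r_2}\,dx\ge\int_{\R^N}|v_1^n|^{r_1}|v_2^n|^{r_2}\,dx$. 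Combining, $J(w_1^n,w_2^n)\le J(v_1^n,v_2^n)$, so $\{(w_1^n,w_2^n)\}$ is again a minimizing sequence for \eqref{i3}, now with nonnegative (indeed radially decreasing) components.

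The point, however, is weaker than that and is exactly the content we need: without symmetrizing, simply replace $(v_1^n,v_2^n)$ by $(|v_1^n|,|v_2^n|)$ directly. One has $|v_i^n|\in H^1(\R^N)$ with $\||v_i^n|\|_2=\|v_i^n\|_2$, $\|\nabla|v_i^n|\|_2\le\|\nabla v_i^n\|_2$ (so the pair stays in $S(a_1,a_2)\cap\mathcal B(\rho_0)$), $\||v_i^n|\|_{p_i}=\|v_i^n\|_{p_i}$, and $\int_{\R^N}|v_1^n|^{r_1}|v_2^n|^{r_2}\,dx$ is unchanged. Hence $J(|v_1^n|,|v_2^n|)\le J(v_1^n,v_2^n)$, so $\{(|v_1^n|,|v_2^n|)\}$ is also a minimizing sequence for \eqref{i3}. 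This is what makes the hypothesis of the lemma meaningful: it is no loss of generality to work with the absolute-value sequence.

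Now suppose $\{(|v_1^n|,|v_2^n|)\}$ is compact in $H^1(\R^N)\times H^1(\R^N)$: up to a subsequence $|v_i^n|\to w_i$ strongly in $H^1(\R^N)$ for some $w_i\ge0$, and $(w_1,w_2)$ is a minimizer of \eqref{i3}. The task is to promote strong convergence of $|v_i^n|$ to strong convergence of $v_i^n$ itself. The standard argument is this: since $\|v_i^n\|_2=\||v_i^n|\|_2\to\|w_i\|_2$ and $\|\nabla v_i^n\|_2\ge\|\nabla|v_i^n|\|_2\to\|\nabla w_i\|_2$, the sequence $\{(v_1^n,v_2^n)\}$ is bounded in $H^1\times H^1$, so up to a further subsequence $(v_1^n,v_2^n)\rightharpoonup(v_1,v_2)$ weakly in $H^1\times H^1$ and a.e.\ in $\R^N$. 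Passing to the a.e.\ limit, $|v_i|=w_i$. Then, since translations do not affect any of the quantities, pass to a translated sequence so that the compact limit $w_i$ is not moved (here one should be slightly careful and phrase everything ``up to translation'', exactly as in the statement; if $\{(|v_1^n|,|v_2^n|)\}$ is compact up to translations, translate $\{(v_1^n,v_2^n)\}$ by the same vectors). We have $\liminf_n J(v_1^n,v_2^n)\ge J(v_1,v_2)$ by weak lower semicontinuity of the gradient terms together with $L^{p_i}$ and $L^{r_1}\!\cdot L^{r_2}$ convergence (the latter from Rellich on bounded sets after a localization, or directly since the moduli converge strongly and dominate). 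But $|v_i|=w_i$ gives $\|\nabla v_i\|_2\ge\|\nabla w_i\|_2$ and the nonlinear terms for $(v_1,v_2)$ equal those for $(w_1,w_2)$, so $J(v_1,v_2)\ge J(w_1,w_2)=m(a_1,a_2)$. Combined with $J(v_1^n,v_2^n)\to m(a_1,a_2)$ we conclude $J(v_1,v_2)=m(a_1,a_2)$ and, tracing the inequalities, $\|\nabla v_i^n\|_2\to\|\nabla v_i\|_2$ and $\|v_i^n\|_2\to\|v_i\|_2$ for $i=1,2$. With weak convergence this yields $v_i^n\to v_i$ strongly in $H^1(\R^N)$, i.e.\ $\{(v_1^n,v_2^n)\}$ is compact.

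The main obstacle, and the only place requiring genuine care, is the norm bookkeeping in this last step: one must verify that the gradient of $v_i^n$ (not just that of $|v_i^n|$) converges, which forces the chain of inequalities $m(a_1,a_2)\le J(v_1,v_2)\le\liminf J(v_1^n,v_2^n)=m(a_1,a_2)$ to be a chain of equalities and in particular forces $\|\nabla v_i^n\|_2\to\|\nabla v_i\|_2$; it is here that one implicitly uses $|v_i|=w_i$ and the fact that $(w_1,w_2)$ is a minimizer, so that no energy can be ``lost'' in passing from $v_i$ to its modulus. Everything else — the invariance of $\|\cdot\|_2$, $\|\cdot\|_{p_i}$ and the coupling integral under taking absolute values, and the diamagnetic inequality $\|\nabla|u|\|_2\le\|\nabla u\|_2$ — is routine. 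One subtlety worth flagging explicitly is to keep the whole argument consistent with the ``up to translation'' clause: the compactness hypothesis on $\{(|v_1^n|,|v_2^n|)\}$ is compactness modulo translations, and one applies the same translations to $\{(v_1^n,v_2^n)\}$ throughout.
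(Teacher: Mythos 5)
Your proof is correct and follows essentially the same route as the paper's: extract the weak limit $(v_1,v_2)$ of the bounded sequence $\{(v_1^n,v_2^n)\}$, use a.e.\ convergence to identify $|v_i|=w_i$, and close the chain $m(a_1,a_2)=\lim J(v_1^n,v_2^n)\ge J(v_1,v_2)\ge m(a_1,a_2)$ (weak lower semicontinuity of the gradient term, convergence of the nonlinear terms) to force convergence of the $H^1$ norms and hence strong convergence. The only differences are cosmetic: your first two paragraphs (the symmetrization detour and the verification that $\{(|v_1^n|,|v_2^n|)\}$ is itself minimizing, used to get $J(w_1,w_2)=m(a_1,a_2)$) are bypassed in the paper, which instead obtains $J(v_1,v_2)\ge m(a_1,a_2)$ from admissibility of $(v_1,v_2)$ after upgrading to strong $L^2$ convergence of $(v_1^n,v_2^n)$ (weak convergence plus convergence of the $L^2$ norms) and then $L^p$ convergence by interpolation; also your stated justification for boundedness points the wrong way ($\|\nabla v_i^n\|_2\ge\|\nabla |v_i^n|\|_2$ is a lower bound), but boundedness is immediate since the sequence lies in $S(a_1,a_2)\cap\mathcal{B}(\rho_0)$.
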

\begin{proof}
First note that there exists $(w_1, w_2)$ such that, up to a subsequence, $(|v_1^n|, |v_2^n|) \rightarrow (w_1, w_2)$ in $H^1(\R^N) \times H^1(\R^N)$ and $(|v_1^n(x)|, |v_2^n(x)|) \rightarrow (w_1(x), w_2(x))$ for a.e. $x \in \R^N$. Since $\{(v_1^n, v_2^n)\}$ is a bounded sequence, there exists $(v_1, v_2) \in H^1(\R^N) \times H^1(\R^N)$ such that, up to a subsequence,
$(v_1^n, v_2^n) \rightharpoonup (v_1, v_2)$ in $H^1(\R^N) \times H^1(\R^N)$ and $(v_1^n(x), v_2^n(x)) \rightarrow (v_1(x), v_2(x))$ for a.e. $x \in \R^N$. By the uniqueness of the limit,  $w_i = |v_i|$ and then $(v_1^n, v_2^n) \rightarrow (v_1, v_2)$ in $L^2(\R^N) \times L^2(\R^N)$. Now since  $(v_1^n, v_2^n) \rightarrow (v_1, v_2)$ in $L^p(\R^N) \times L^p(\R^N)$ for $2 < p <2^*$ it follows that
$$
m(a_1, a_2)= J(v_1^n, v_2^n) + o(1) \geq J(v_1, v_2) \geq m(a_1, a_2),
$$
and thus $(v_1^n, v_2^n) \rightarrow (v_1, v_2)$ in $H^1(\R^N) \times H^1(\R^N)$.
\end{proof}

Next, recalling \eqref{underscale}, we define for $(u_1,u_2) \in H^1(\R^N)\times H^1(\R^N)$
\begin{align*}
\begin{split}
Q(u_1, u_2):&=\frac{d}{dt}J(u_1^t, u_2^t)|_{t=1} = \int_{\R^N} |\nabla u_1|^2 + |\nabla u_2|^2 \, dx\\ \nonumber
            &-\sum_{i =1}^2 \frac{\mu_i}{p_i} \left(\frac {p_i}{2} -1 \right)N \int_{\R^N}|u_i|^{p_i} \, dx
            -\beta  \left(\frac {r_1 + r_2}{2} -1 \right)N\int_{\R^N} |u_1|^{r_1}|u_2|^{r_2} \, dx.
\end{split}
\end{align*}
{Heuristically any solution $(u_1,u_2)$ of \eqref{sys1}, for some $(\lambda_1, \lambda_2) \in \R^2$, must satisfy the condition $Q(u_1,u_2) =0$. This can be proved rigorously by using the Pohozaev identity associated to \eqref{sys1}.
In particular the set defined by $Q =0$ corresponds to a natural constraint.}

\begin{lem} \label{la}
Assume $2 < p_1, p_2, r_1 + r_2 < 2^*$. If $(u_1, u_2) \neq (0, 0)$ solves the system \eqref{sys1} for some $(\lambda_1, \lambda_2) \in \R^2,$ then $\lambda_1 <0$ or $\lambda_2 <0$.
\end{lem}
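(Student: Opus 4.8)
The plan is to test the system against the solutions themselves and combine the resulting identities with the Pohozaev-type constraint $Q(u_1,u_2)=0$. First I would multiply the first equation of \eqref{sys1} by $u_1$, the second by $u_2$, and integrate over $\R^N$, using that $(u_1,u_2)\in H^1(\R^N)\times H^1(\R^N)$ so that all integrations by parts are justified. This gives the two identities
\[
\|\nabla u_1\|_2^2 = \la_1\|u_1\|_2^2 + \mu_1\|u_1\|_{p_1}^{p_1} + \be r_1\int_{\R^N}|u_1|^{r_1}|u_2|^{r_2}\,dx,
\]
\[
\|\nabla u_2\|_2^2 = \la_2\|u_2\|_2^2 + \mu_2\|u_2\|_{p_2}^{p_2} + \be r_2\int_{\R^N}|u_1|^{r_1}|u_2|^{r_2}\,dx.
\]
Adding these yields an expression for $\la_1\|u_1\|_2^2 + \la_2\|u_2\|_2^2$ in terms of the gradient norms, the $L^{p_i}$ norms, and the coupling integral. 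Next I would invoke the fact, recalled just before the statement, that any $H^1$-solution of \eqref{sys1} satisfies $Q(u_1,u_2)=0$ (via the Pohozaev identity). This second relation also expresses $\|\nabla u_1\|_2^2+\|\nabla u_2\|_2^2$ in terms of the $L^{p_i}$ norms and the coupling integral, but with the dilation exponents $(\tfrac{p_i}{2}-1)N$ and $(\tfrac{r_1+r_2}{2}-1)N$ as weights.

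Subtracting $Q(u_1,u_2)=0$ from the sum of the two tested identities eliminates the gradient terms and leaves
\[
\la_1\|u_1\|_2^2 + \la_2\|u_2\|_2^2
= -\sum_{i=1}^2 \mu_i\Big(1 - \tfrac{p_i}{2}\cdot\tfrac{1}{?}\Big)\cdots,
\]
more precisely a linear combination of $\mu_i\|u_i\|_{p_i}^{p_i}$ and $\be\int|u_1|^{r_1}|u_2|^{r_2}$ whose coefficients I would check are all $\le 0$ (and not all zero since $(u_1,u_2)\neq(0,0)$). Concretely, the coefficient of $\mu_i\|u_i\|_{p_i}^{p_i}$ works out to $1-\big(\tfrac{p_i}{2}-1\big)\tfrac{N}{2}\cdot\tfrac{2}{?}$; the honest computation gives coefficient $\big(1-\tfrac{(p_i/2-1)N}{2}\big)$ times a positive factor — in any case one must verify each is $\le 0$. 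The condition $p_i<2^*=\tfrac{2N}{N-2}$ gives $(\tfrac{p_i}{2}-1)N < 2$, which unfortunately points the wrong way; the correct sign comes instead from comparing against the $L^2$-critical exponent, so I would need to be careful and perhaps instead eliminate a different combination (e.g. use $2\times$ the energy relation versus $Q$, exploiting that $p_i>2$ and $r_1+r_2>2$ make certain coefficients strictly negative). The upshot is an identity of the form $\la_1\|u_1\|_2^2+\la_2\|u_2\|_2^2 = (\text{strictly negative quantity})$, whence at least one of $\la_1,\la_2$ must be negative.

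The main obstacle I anticipate is precisely bookkeeping the signs of the coefficients in the right linear combination of the tested identities and the Pohozaev identity: one needs to choose the combination (which multiple of the energy identity to subtract from which multiple of $Q=0$) so that every surviving term has a definite, non-positive sign, using only $2<p_i,r_1+r_2<2^*$ and no information on where these exponents sit relative to $2+\tfrac4N$. A secondary point is ensuring the resulting negative quantity is not identically zero, which follows because $(u_1,u_2)\neq(0,0)$ forces at least one of $\|u_1\|_{p_1}^{p_1}$, $\|u_2\|_{p_2}^{p_2}$ to be positive (if both components were zero in $L^{p_i}$ they would be zero in $H^1$ by the equation), and the coupling term is nonnegative. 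Once the sign of $\la_1\|u_1\|_2^2+\la_2\|u_2\|_2^2$ is pinned down as negative, the conclusion $\la_1<0$ or $\la_2<0$ is immediate since $\|u_i\|_2^2\ge0$.
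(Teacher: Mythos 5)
Your strategy is exactly the one the paper follows: test each equation against its own component, sum, and subtract the Pohozaev constraint $Q(u_1,u_2)=0$ to eliminate the gradient term, so that $\lambda_1\|u_1\|_2^2+\lambda_2\|u_2\|_2^2$ is expressed through $\int|u_i|^{p_i}$ and the coupling integral. The gap is that you stop exactly at the computation the lemma consists of, and at that point you assert a wrong implication: $p_i<2^*$ does \emph{not} give $\bigl(\tfrac{p_i}{2}-1\bigr)N<2$ (that inequality is the mass-subcritical condition $p_i<2+\tfrac4N$, which is neither assumed nor needed here); it gives $\bigl(\tfrac{p_i}{2}-1\bigr)N<p_i$, i.e. $\tfrac{N(p_i-2)}{2p_i}<1$, which is precisely what the sign check requires. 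Indeed, eliminating the gradients between your two identities yields
\begin{align*}
\lambda_1\|u_1\|_2^2+\lambda_2\|u_2\|_2^2
&=\sum_{i=1}^2\mu_i\Bigl(\frac{N(p_i-2)}{2p_i}-1\Bigr)\int_{\R^N}|u_i|^{p_i}\,dx\\
&\quad+\beta\Bigl(\frac{N(r_1+r_2-2)}{2}-(r_1+r_2)\Bigr)\int_{\R^N}|u_1|^{r_1}|u_2|^{r_2}\,dx,
\end{align*}
and both brackets are strictly negative exactly because $p_i<2^*$ and $r_1+r_2<2^*$ (for $N=1,2$ this is automatic). So no ``different combination'' of the energy identity and $Q=0$ is needed, and your hedge about comparing with the $L^2$-critical exponent is a misreading of the exponent algebra rather than a real obstruction.

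With that sign check done, your closing observation finishes the proof as in the paper: since $(u_1,u_2)\neq(0,0)$, at least one of the integrals $\int|u_i|^{p_i}$ is strictly positive and the coupling term is nonnegative, so the right-hand side above is strictly negative; hence $\lambda_1\|u_1\|_2^2+\lambda_2\|u_2\|_2^2<0$ and at least one of $\lambda_1,\lambda_2$ is negative. As written, however, the proposal leaves the decisive inequality unverified (and mis-stated), so it is incomplete at the only nontrivial step.
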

\begin{proof}
Testing \eqref{sys1} by $(u_1, u_2)$ and integrating in $\R^N$, one  has
$$
\lambda_1 a_1 + \lambda_2 a_2 = \int_{\R^N}|\nabla u_1|^2 + |\nabla u_2|^2 \, dx
                              - \sum_{i =1}^2 \mu_i \int_{\R^N}|u_i|^{p_i} \, dx
                              {-}\beta(r_1 + r_2) \int_{\R^N}|u_1|^{r_1}|u_2|^{r_2} \,dx.
$$
Since $(u_1, u_2)$ satisfies \eqref{sys1}, then $Q(u_1, u_2) =0$, which implies
\begin{align*}
\lambda_1 a_1 + \lambda_2 a_2 &= \sum_{i =1}^2\left(\frac{\mu_i}{p_i}
                              \left(\frac {p_i}{2} -1\right)N - \mu_i\right)\int_{\R^N}|u_i|^{p_i}\,dx\\
                              & + \beta\left(\left(\frac{r_1 + r_2}{2} -1\right)N - (r_1 + r_2)\right)\int_{\R^N}|u_1|^{r_1}|u_2|^{r_2}\,dx < 0.
\end{align*}
\end{proof}
We recall that a sequence $\{(u_1^n, u_2^n)\} \subset S(a_1,a_2)$ is a Palais-Smale sequence for $J$ restricted to $S(a_1,a_2)$, at the level $c$, if
$J(u_1^n, u_2^n)  \to c $ and  $ (J_{\mid {S(a_1,a_2)}})'(u_1^n, u_2^n) \to 0.$

{The proof of our next lemma can be obtained by a direct adaptation of the one of \cite[Lemma 3.2]{BJ}. {Indeed observe that the convergence of $(u_1^n, u_2^n) \rightarrow (u_1, u_2)$ in $L^p(\R^N) \times L^p(\R^N)$ for $2 < p < 2^*,$ which is obtained at point (i) \cite[Lemma 3.2]{BJ} is not used to derive the points (i)-(iv) in that lemma.}

\begin{lem} \label{strongconv}
Assume $2 < p_1, p_2, r_1 + r_2 < 2^*$. For any bounded Palais-Smale sequence $\{(u_1^n, u_2^n)\}$ for $J$ restricted to $S(a_1,a_2)$, there exist  $(u_1,u_2) \in H^1(\R^N) \times H^1(\R^N)$, $(\la_1, \la_2) \in \R^2$ and a sequence $\{(\la_1^n, \la_2^n)\} \subset \R^2$ such that, up to a subsequence,
\begin{itemize}
\item[(i)]$(u_1^n, u_2^n) \rightharpoonup (u_1, u_2)$ in $H^1(\R^N) \times H^1(\R^N)$;
\item[(ii)] $(\la_1^n, \la_2^n)  \to (\la_1,\la_2)$ in  $\R^2$;
\item[(iii)] $J'(u_1^n, u_2^n) - \la_1^n (u_1^n,0) - \la_2^n (0, u_2^n) \to 0$ in $H^{-1}(\R^N) \times H^{-1}(\R^N)$;
\item[(iv)] $(u_1,u_2)$ is solution to the system \eqref{sys1} where $(\la_1,\la_2)$ {is given in (ii)}.
\end{itemize}
In addition, if $(u_1^n, u_2^n) \rightarrow (u_1, u_2)$ in $L^p(\R^N) \times L^p(\R^N)$ for $2 < p < 2^*,$ then $u_1^n \to u_1$ in $H^1(\R^N)$ if $\lambda_1 <0$. Similarly  $u_2^n\to u_2$ in $H^1(\R^N)$ if $\la_2<0$.
\end{lem}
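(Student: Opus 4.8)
The plan is to follow the standard treatment of Palais--Smale sequences for constrained functionals, adapting \cite[Lemma 3.2]{BJ}. Since $\{(u_1^n,u_2^n)\}$ is bounded in $H^1(\R^N)\times H^1(\R^N)$, I would first pass to a subsequence so that $(u_1^n,u_2^n)\rightharpoonup(u_1,u_2)$ in $H^1(\R^N)\times H^1(\R^N)$, strongly in $L^p_{loc}(\R^N)\times L^p_{loc}(\R^N)$ for every $2\le p<2^*$, and a.e.\ in $\R^N$; this is (i). For (ii) and (iii), I would use that the Palais--Smale condition for $J$ on the constraint $S(a_1,a_2)$ means precisely that, with
\[
\lambda_i^n:=\frac{1}{a_i}\,\partial_{u_i}J(u_1^n,u_2^n)[u_i^n], \qquad i=1,2,
\]
(the Lagrange multipliers arising from the constrained criticality), one has $J'(u_1^n,u_2^n)-\lambda_1^n(u_1^n,0)-\lambda_2^n(0,u_2^n)\to0$ in $H^{-1}(\R^N)\times H^{-1}(\R^N)$, which is (iii). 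Because the sequence is bounded in $H^1(\R^N)\times H^1(\R^N)$ and $p_i<2^*$, $r_1+r_2<2^*$, the Gagliardo--Nirenberg inequalities \eqref{ga1}--\eqref{ga2} show that $\partial_{u_i}J(u_1^n,u_2^n)[u_i^n]$ remains bounded, hence $\{(\lambda_1^n,\lambda_2^n)\}$ is bounded and, along a further subsequence, converges to some $(\lambda_1,\lambda_2)\in\R^2$; this gives (ii).

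To obtain (iv), I would fix $(\phi_1,\phi_2)\in C_c^\infty(\R^N)\times C_c^\infty(\R^N)$ and let $n\to\infty$ in the relation from (iii). The gradient and mass terms pass to the limit by weak convergence in $H^1(\R^N)$; the pure power terms $|u_i^n|^{p_i-2}u_i^n$ converge in $L^{p_i'}(\mathrm{supp}\,\phi_i)$ by continuity of the Nemytskii operator and the local strong $L^{p_i}$ convergence; and the coupling terms $|u_1^n|^{r_1-2}u_1^n|u_2^n|^{r_2}$ and $|u_1^n|^{r_1}|u_2^n|^{r_2-2}u_2^n$ converge a.e.\ and are bounded in $L^\sigma(\mathrm{supp}\,\phi_i)$ for some $\sigma>1$ (by a H\"older estimate in which the exponents are chosen so that the total power of $u_1^n,u_2^n$ equals $r_1+r_2<2^*$), so Vitali's theorem yields their convergence in $L^1(\mathrm{supp}\,\phi_i)$. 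Thus $(u_1,u_2)$ solves \eqref{sys1} weakly, and hence classically by elliptic regularity, with the pair $(\lambda_1,\lambda_2)$ from (ii).

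For the final assertion I would assume in addition $(u_1^n,u_2^n)\to(u_1,u_2)$ in $L^p(\R^N)\times L^p(\R^N)$ for $2<p<2^*$ and $\lambda_1<0$, and test (iii) against $(u_1^n-u_1,0)$, which is bounded in $H^1(\R^N)\times H^1(\R^N)$; this gives
\begin{align*}
\int_{\R^N}\nabla u_1^n\cdot\nabla(u_1^n-u_1)\,dx&-\mu_1\int_{\R^N}|u_1^n|^{p_1-2}u_1^n(u_1^n-u_1)\,dx\\
&-\beta r_1\int_{\R^N}|u_1^n|^{r_1-2}u_1^n|u_2^n|^{r_2}(u_1^n-u_1)\,dx-\lambda_1^n\int_{\R^N}u_1^n(u_1^n-u_1)\,dx=o(1).
\end{align*}
The term with $|u_1^n|^{p_1-2}u_1^n$ tends to $0$ because $\{|u_1^n|^{p_1-1}\}$ is bounded in $L^{p_1'}(\R^N)$ while $u_1^n-u_1\to0$ in $L^{p_1}(\R^N)$; the coupling term tends to $0$ because $\{|u_1^n|^{r_1-1}|u_2^n|^{r_2}\}$ is bounded in $L^{(r_1+r_2)'}(\R^N)$ (again a H\"older argument using that $\|u_i^n\|_{r_1+r_2}$ is controlled since $r_1+r_2<2^*$) while $u_1^n-u_1\to0$ in $L^{r_1+r_2}(\R^N)$. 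Writing $\int_{\R^N}\nabla u_1^n\cdot\nabla(u_1^n-u_1)\,dx=\|\nabla(u_1^n-u_1)\|_2^2+o(1)$ and $\int_{\R^N}u_1^n(u_1^n-u_1)\,dx=\|u_1^n-u_1\|_2^2+o(1)$ by weak convergence in $H^1(\R^N)$, and using $\lambda_1^n\to\lambda_1$, I would reach $\|\nabla(u_1^n-u_1)\|_2^2-\lambda_1\|u_1^n-u_1\|_2^2=o(1)$; since $\lambda_1<0$ both nonnegative summands must vanish, so $u_1^n\to u_1$ in $H^1(\R^N)$. The statement for $u_2$ when $\lambda_2<0$ is obtained identically.

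The only point requiring care is the H\"older bookkeeping for the coupling term $|u_1^n|^{r_1-2}u_1^n|u_2^n|^{r_2}$: one must split the exponents so that the surviving powers of $u_1^n$ and $u_2^n$ lie in an $L^q$ with $q\le2^*$, where $H^1$-boundedness controls them, while the remaining factor is exactly the strongly convergent $u_1^n-u_1$ (respectively $u_2^n-u_2$). Everything else is a routine passage to the limit, as the paper indicates by reference to \cite[Lemma 3.2]{BJ}; the genuinely load-bearing hypothesis in the last statement is the sign $\lambda_i<0$, which is what prevents the two nonnegative terms in $\|\nabla(u_i^n-u_i)\|_2^2-\lambda_i\|u_i^n-u_i\|_2^2=o(1)$ from cancelling and thus forces each of them to go to zero.
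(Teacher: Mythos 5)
Your proposal is correct and is essentially the same argument the paper has in mind: the paper simply refers to a direct adaptation of \cite[Lemma 3.2]{BJ}, whose scheme is exactly what you wrote (multipliers $\la_i^n$ defined by testing with $u_i^n$, their boundedness via Gagliardo--Nirenberg, passage to the limit in the weak formulation, and strong $H^1$ convergence of the component whose multiplier is negative, using the strong $L^p$ convergence). No gaps of substance; only the routine verification that $(J_{\mid S(a_1,a_2)})'(u_1^n,u_2^n)\to 0$ yields (iii) with your choice of $\la_i^n$ is left implicit, as it is in the cited reference.
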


\section{Proofs of Theorem \ref{thm1}(i) and Theorem \ref{thm2}(i)}\label{pro1}

\begin{lem} \label{mpgeo1}
Assume that $(H_0)$ or $(H_1)$ holds. There exist a $\beta_0=\beta_0(a_1, a_2) > 0$ and a $\rho_0= \rho_0(a_1, a_2) >0$, such that
{if $\, 0 \leq d_1 \leq a_1$, $ 0 \leq d_2 \leq a_2$ with $(d_1,d_2) \neq (0,0)$}, then
\begin{equation}\label{l32}
J(u_1,u_2) \geq 0  \quad \mbox{on} \quad S(d_1,d_2) \cap [\mathcal{B}(2 \rho_0) \backslash \mathcal{B}(\rho_0)]
\end{equation}
for any $0 < \beta \leq \beta_0$. In addition it holds that $\beta_0(a_1,a_2) \to \infty$ as $a_1 \to 0$ and $a_2 \to 0$.
\end{lem}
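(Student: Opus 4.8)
The plan is to bound $J$ from below purely in terms of $\rho:=\|\nabla u_1\|_2^2+\|\nabla u_2\|_2^2$, which on the annulus $S(d_1,d_2)\cap[\mathcal{B}(2\rho_0)\setminus\mathcal{B}(\rho_0)]$ satisfies $\rho_0\le\rho<2\rho_0$. For $(u_1,u_2)\in S(d_1,d_2)$ with $0\le d_i\le a_i$, I would apply the Gagliardo--Nirenberg inequality \eqref{ga1}: since $\|u_i\|_2^2=d_i\le a_i$ and the exponent of $\|u_i\|_2$ is positive (because $p_i<2^*$), one gets $\frac{\mu_i}{p_i}\|u_i\|_{p_i}^{p_i}\le \tilde K_i\,\rho^{\theta_i}$ with $\theta_i:=\frac{N(p_i-2)}{4}$ and $\tilde K_i$ depending only on $N,p_i,\mu_i,a_i$ and carrying a positive power of $a_i$; similarly, combining $|u_1|^{r_1}|u_2|^{r_2}\le\frac{r_1}{r_1+r_2}|u_1|^{r_1+r_2}+\frac{r_2}{r_1+r_2}|u_2|^{r_1+r_2}$ with \eqref{ga1} applied to the exponent $r_1+r_2\in(2,2^*)$ (or using \eqref{ga2} directly), one gets $\beta\int_{\R^N}|u_1|^{r_1}|u_2|^{r_2}\,dx\le\beta\,\tilde K_3\,\rho^{\sigma}$ with $\sigma:=\frac{N(r_1+r_2-2)}{4}$ and $\tilde K_3$ depending only on $N,r_1,r_2,a_1,a_2$ and carrying a positive power of each $a_i$. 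The key point is that $\theta_1,\theta_2,\sigma>0$ always, whereas $(H_0)$ forces $\theta_1,\theta_2<1<\sigma$ and $(H_1)$ forces $\sigma<1<\theta_1,\theta_2$; and replacing $d_i$ by $a_i$ makes $\tilde K_1,\tilde K_2,\tilde K_3$ independent of $(d_1,d_2)$, which gives the required uniformity (when $d_1$ or $d_2$ vanishes the corresponding terms simply drop, so the estimate still holds).

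Collecting these bounds, for every such $(u_1,u_2)$ one has
\[
J(u_1,u_2)\ \ge\ h(\rho)-\beta\,\tilde K_3\,\rho^{\sigma},\qquad h(\rho):=\tfrac12\rho-\tilde K_1\rho^{\theta_1}-\tilde K_2\rho^{\theta_2},
\]
and I would then analyse $h(\rho)/\rho=\tfrac12-\tilde K_1\rho^{\theta_1-1}-\tilde K_2\rho^{\theta_2-1}$. Under $(H_0)$ (so $\theta_i-1<0$) this is increasing on $(0,\infty)$ with limit $\tfrac12$ as $\rho\to\infty$, hence there is $\rho_0=\rho_0(a_1,a_2)>0$ with $h(\rho)\ge\tfrac14\rho$ for all $\rho\ge\rho_0$; under $(H_1)$ (so $\theta_i-1>0$) it is decreasing with limit $\tfrac12$ as $\rho\to0^+$, hence one can take $\rho_0=\rho_0(a_1,a_2)>0$ so small that $h(\rho)\ge\tfrac14\rho$ for all $\rho\in(0,2\rho_0]$. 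In either case, on the annulus we obtain
\[
J(u_1,u_2)\ \ge\ \tfrac14\rho-\beta\,\tilde K_3\,\rho^{\sigma}\ \ge\ \tfrac14\rho_0-\beta\,\tilde K_3\,(2\rho_0)^{\sigma},
\]
so it suffices to set $\beta_0:=\rho_0^{\,1-\sigma}\big/\big(2^{\sigma+2}\tilde K_3\big)>0$, which yields $J(u_1,u_2)\ge0$, i.e. \eqref{l32}, for all $0<\beta\le\beta_0$. (The excluded case $(d_1,d_2)=(0,0)$ anyway makes the annulus empty.)

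For the final assertion, as $a_1,a_2\to0$ one has $\tilde K_1,\tilde K_2\to0$ and $\tilde K_3\to0$. Under $(H_0)$ this forces $\rho_0\to0$, and since $1-\sigma<0$ we get $\rho_0^{\,1-\sigma}\to\infty$, hence $\beta_0\to\infty$; under $(H_1)$ a fixed $\rho_0$ remains admissible for all small $a_1,a_2$, and $\tilde K_3\to0$ alone then gives $\beta_0\to\infty$. I do not expect a serious analytic obstacle here: the only delicate points are the book-keeping ones, namely making the constants uniform in $(d_1,d_2)$ through the substitution $d_i\mapsto a_i$, and noticing that the monotonicity of $h(\rho)/\rho$ — and therefore whether $\rho_0$ must be chosen large (case $(H_0)$) or small (case $(H_1)$) — is reversed between the two hypotheses, so the choice of $\rho_0$ has to be made from the appropriate side while keeping track of its explicit dependence on $a_1,a_2$ for the last statement.
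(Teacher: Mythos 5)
Your proof is correct and follows essentially the same route as the paper: a Gagliardo--Nirenberg lower bound for $J$ in terms of $\rho=\|\nabla u_1\|_2^2+\|\nabla u_2\|_2^2$ with constants that only increase when $d_i$ is replaced by $a_i$, then a choice of $\rho_0$ (large under $(H_0)$, small under $(H_1)$) and of a small $\beta_0$ controlling the coupling term on the annulus, with $\beta_0\to\infty$ as $a_1,a_2\to0$ because the constants vanish. The only differences are cosmetic (you handle general $(d_1,d_2)$ directly and give an explicit formula for $\beta_0$ instead of the paper's inequalities), so there is nothing to correct.
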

\begin{proof}
{Let us first consider the special case where $(d_1,d_2) = (a_1,a_2).$ For any $(u_1, u_2) \in S(a_1,a_2)$, let $\rho := \int_{\R^N} |\nabla u_1|^2 + |\nabla u_2|^2 dx.$ Then using } \eqref{ga1}-\eqref{ga2}, we have
\begin{align}\label{x}
\begin{split}
J(u_1, u_2) &= \frac12 \rho
                - \sum_{i=1}^2 \frac{\mu_i}{p_i}\int_{\R^N} |u_i|^{p_i} \,dx
                -\be\int_{\R^N}|u_1|^{r_1}|u_2|^{r_2} \, dx \\
            & \geq \frac12 \rho
                -\sum_{i = 1}^{2}
                K_i \left(\int_{\R^N} |\nabla u_i|^2 \,dx\right)^{\frac{N(p_i-2)}{4}} \\
            &-\beta K_3 \left(\int_{\R^N} |\nabla u_1|^2 \, dx \right)^{\frac{N(r_1q-2)}{4q}}
                \left(\int_{\R^N} |\nabla u_1|^2 \, dx \right)^{\frac{N(r_2q'-2)}{4q'}}\\
            &\geq \frac12 \rho
                - \sum_{i = 1}^{2} K_i \left(\int_{\R^N} |\nabla u_1|^2 + |\nabla u_2|^2 \,dx\right)^{\frac{N(p_i-2)}{4}} \\
            &-\beta K_3 \left(\int_{\R^N} |\nabla u_1|^2 + |\nabla u_2|^2 \, dx \right)^{\frac{N(r_1q-2)}{4q}}
                \left(\int_{\R^N} |\nabla u_1|^2 + |\nabla u_2|^2 \, dx \right)^{\frac{N(r_2q'-2)}{4q'}},\\
            & \geq \frac12 \rho - K_1 \rho^{\frac{N(p_1-2)}{4}} - K_2 \rho^{\frac{N(p_2-2)}{4}}
               -\beta  K_3  \rho^{\frac{N({r_1 + r_2}-2)}{4}}, \\
\end{split}
\end{align}
where
\begin{align}\label{ki}
K_i :=\frac{\mu_i}{p_i} {{C(N, p_i)}} a_i^{\frac{(1- {\alpha}(p_i))p_i}{2}} \quad \mbox{and} \quad  K_3 :={{C(N, r_1, r_2, q)}} a_1^{\frac{(1- {\alpha}(r_1q))r_1}{2}} a_2^{\frac{(1- {\alpha}(r_2q'))r_2}{2}}.
\end{align}

Now if $(H_0)$ holds, then $\frac{N(p_i-2)}{4}<1$ for $i=1,2$, and $ \frac{N(r_1+r_2 -2)}{4}>1$. We fix a  $\rho = \rho_0 >0$ sufficiently large so that
\begin{equation}\label{boundrho}
K_1 \rho_0^{\frac{N(p_1-2)}{4}-1} + K_2\rho_0^{\frac{N(p_2-2)}{4}-1} \leq  \frac18,
\end{equation}
and then we fix a $\beta_0 >0$ small enough, satisfying
\begin{equation}\label{boundbeta}
\beta_0 K_3 (2\rho_0)^{\frac{N({r_1 + r_2}-2)}{4}-1} \leq \frac18.
\end{equation}
Observe that the left hand side of \eqref{boundrho} and of \eqref{boundbeta} is a decreasing, respectively increasing, function of $\rho_0$ and thus we deduce that
\begin{equation}\label{locmin}
J(u_1,u_2) \geq \frac{1}{4}\rho_0 \quad \mbox{for }(u_1,u_2) \in \mathcal{B}(2 \rho_0) \backslash \mathcal{B}(\rho_0).
\end{equation}
If we assume that $(H_1)$ holds we fix a $\rho = \rho_0 >0$ sufficiently small so that
\begin{equation}\label{boundrhotilde}
K_1 (2\rho_0)^{\frac{N(p_1-2)}{4}-1} + K_2(2\rho_0)^{\frac{N(p_2-2)}{4}-1} \leq  \frac18,
\end{equation}
and then we fix a $\beta_0 >0$ small enough, satisfying
\begin{equation}\label{boundbetatilde}
\beta_0 K_3 \rho_0^{\frac{N({r_1 + r_2}-2)}{4}-1} \leq \frac18.
\end{equation}
Here again one can readily check that \eqref{locmin} holds.
{ Now to establish \eqref{l32} it suffices to observe that the choices of $\beta_0 >0$ and $\rho_0 >0$ done when $(d_1,d_2) = (a_1,a_2)$ are still valid in the general case.}
{ This follows directly from the observation that the $K_j$, $j=1,2,3$ are increasing functions of $a_1$ and $a_2$}. Finally we observe that, at the expense of requiring $a_1,a_2$ sufficiently small, it is possible to choose $\beta_0(a_1,a_2)$ arbitrarily large. Indeed, when $(H_0)$ holds, since $K_j \to 0$, $j=1,2,3$ as $a_i \to 0$, $i=1,2$, $\rho_0 >0$  in \eqref{boundrho} can be taken arbitrarily small and in \eqref{boundbeta}, $\beta_0 >0$ can be taken large if $\rho_0 >0$ is small. When $(H_1)$ holds we reach the same conclusion by similar arguments.
\end{proof}

From now on, for $a_1, a_2 \geq 0$ given, we fix a $\rho_0 >0$ and a $\beta_0 >0$ as determined in Lemma \ref{mpgeo1}.  For any $0 \leq d_1 \leq a_1$, $0 \leq d_2 \leq a_2$ we define
\begin{align}\label{another-min}
m(d_1, d_2):= \inf_{(u_1, u_2) \in S(d_1,d_2) \cap \mathcal{B}(\rho_0)} J(u_1, u_2).
\end{align}
\begin{lem}\label{another-sub-add}
Assume that $(H_0)$ or $(H_1)$ holds. Then for  $0 < \beta \leq  \beta_{0},$
\begin{itemize}
\item[(i)] {For any $0 \leq d_i  \leq a_i$, $i=1,2 $, $m(d_1,d_2) \leq 0$}. In addition if $(d_1,d_2) \neq (0,0)$ when $(H_0)$ hold or $d_1 \neq 0$ and $d_2 \neq 0$ when $(H_1)$ holds, we have $m(d_1,d_2) <0.$
\item[(ii)] If  $(d_1^n, d_2^n)$ is such that $(d_1^n, d_2^n) \to (d_1, d_2)$ as $n \to \infty$ with $0 \leq d_i^n \leq a_i$ for $i=1 ,2$, we have $m(d_1^n,d_2^n) \to m(d_1,d_2)$ as $n \to \infty$.
\item[(iii)]{For any $0 \leq d_i  \leq a_i$, $i=1,2 $,
$m(a_1, a_2) \leq m(d_1, d_2) + m(a_1-d_1, a_2-d_2).$}
\end{itemize}
\end{lem}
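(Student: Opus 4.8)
The plan is to handle the three parts in order, exploiting the scaling $u \mapsto u^t(x) = t^{N/2}u(tx)$ and the rearrangement results of Lemma \ref{Ikoma} and Lemma \ref{Ikoma1}. For part (i), the bound $m(d_1,d_2) \le 0$ is obtained by starting from any $(u_1,u_2) \in S(d_1,d_2)$ and letting $t \to 0^+$ in \eqref{underscale}: the gradient term goes to $0$ (so eventually $(u_1^t,u_2^t) \in \mathcal B(\rho_0)$) while the other terms vanish faster than the non-positivity is broken, and in fact $J(u_1^t,u_2^t) < 0$ for small $t$ whenever $(u_1,u_2)$ is genuinely present — this is exactly the content of Lemma \ref{negative} applied to the masses $(d_1,d_2)$, which covers the strict inequality under the stated sign conditions on the $d_i$. (If both $d_i = 0$ the infimum is trivially $0$.) One only needs to note that the element $(u_1^t,u_2^t)$ lies in $S(d_1,d_2) \cap \mathcal B(\rho_0)$ for $t$ small, which is immediate.

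For part (ii), the continuity of $(d_1,d_2) \mapsto m(d_1,d_2)$ on the box $[0,a_1]\times[0,a_2]$, I would argue by a standard two-sided estimate. Given $(d_1^n,d_2^n) \to (d_1,d_2)$, pick a near-optimal $(u_1,u_2) \in S(d_1,d_2)\cap\mathcal B(\rho_0)$ for $m(d_1,d_2)$; if $d_i > 0$, rescale the $i$-th component multiplicatively by $\sqrt{d_i^n/d_i}$ (and if $d_i = 0$ just take a small bump or the zero function, handled separately) to land in $S(d_1^n,d_2^n)$, staying inside $\mathcal B(\rho_0)$ for $n$ large by continuity of the $L^2$-rescaling in $H^1$; this gives $\limsup_n m(d_1^n,d_2^n) \le m(d_1,d_2)$. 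For the reverse inequality, take near-optimizers $(u_1^n,u_2^n)$ for $m(d_1^n,d_2^n)$; by Lemma \ref{mpgeo1} and $m(d_1^n,d_2^n) \le 0 < \frac14\rho_0$ these sequences stay in $\mathcal B(\rho_0)$ and are thus bounded in $H^1(\R^N)\times H^1(\R^N)$, so one may rescale them back to $S(d_1,d_2)$ with vanishing perturbation in $H^1$ and conclude $\liminf_n m(d_1^n,d_2^n) \ge m(d_1,d_2)$. Care is needed near the boundary of the box, where a component may pass through $0$; there the corresponding energy contribution is controlled directly since $|d_i^n - d_i|$ small forces the rescaled $L^{p_i}$ and interaction integrals to be small.

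For part (iii), the subadditivity $m(a_1,a_2) \le m(d_1,d_2) + m(a_1-d_1, a_2-d_2)$, the idea is to take near-optimizers $(u_1,u_2) \in S(d_1,d_2)\cap\mathcal B(\rho_0)$ and $(v_1,v_2) \in S(a_1-d_1,a_2-d_2)\cap\mathcal B(\rho_0)$, which by Lemma \ref{nonnegative}-type reasoning (or simply replacing by absolute values, which does not increase $J$) may be taken non-negative, and to form the rearranged pair $(\{u_1,v_1\}^\star, \{u_2,v_2\}^\star)$. By Lemma \ref{Ikoma}(iii) this pair lies in $S(a_1,a_2)$; by Lemma \ref{Ikoma}(iv) its gradient energy is at most $\|\nabla u_i\|_2^2 + \|\nabla v_i\|_2^2$ summed, hence it lies in $\mathcal B(2\rho_0)$ — and here is the genuine obstacle, flagged already in the introduction: the rearranged pair need not lie in $\mathcal B(\rho_0)$, so it is not directly an admissible competitor for $m(a_1,a_2)$. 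To circumvent this I would use the sign information: by Lemma \ref{mpgeo1} the set $S(a_1,a_2)\cap[\mathcal B(2\rho_0)\setminus\mathcal B(\rho_0)]$ carries $J \ge 0$, while $\{u_i,v_i\}^\star$ together with Lemma \ref{Ikoma}(iii),(v) and Lemma \ref{Ikoma1} yields
\[
J(\{u_1,v_1\}^\star,\{u_2,v_2\}^\star) \le J(u_1,u_2) + J(v_1,v_2) \le m(d_1,d_2) + m(a_1-d_1,a_2-d_2) + \eps < 0
\]
for the near-optimal choice (using part (i) for strict negativity when the masses are non-trivial). Since the rearranged pair has strictly negative energy it cannot lie in the annulus where $J \ge 0$, hence it must lie in $\mathcal B(\rho_0)$, making it an admissible competitor for $m(a_1,a_2)$; letting $\eps \to 0$ gives the claim. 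The degenerate cases where one of the four mass pairs is $(0,0)$ are handled separately and are easy, using part (i) (with the convention that the corresponding infimum is $0$). The main obstacle is precisely this last point — keeping the rearranged competitor inside $\mathcal B(\rho_0)$ rather than merely $\mathcal B(2\rho_0)$ — and the resolution is the energy-sign dichotomy furnished by Lemma \ref{mpgeo1}.
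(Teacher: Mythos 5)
Your proposal is correct and follows essentially the same route as the paper: part (i) via the $t\to 0^+$ scaling of Lemma \ref{negative}, part (ii) by $L^2$-renormalizing near-optimizers of the perturbed/limiting masses (the paper's $w_i^n = d_i^{1/2}u_i^n/\|u_i^n\|_2$), and part (iii) by the Shibata rearrangement combined with exactly the paper's dichotomy: the rearranged competitor lies in $\mathcal{B}(2\rho_0)$, and either it is in $\mathcal{B}(\rho_0)$ (admissible) or it lies in the annulus where Lemma \ref{mpgeo1} gives $J\ge 0$, contradicting the strict negativity of $m(d_1,d_2)+m(a_1-d_1,a_2-d_2)$. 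The only point to tidy is that under $(H_1)$ this sum can vanish even when none of the mass pairs is $(0,0)$ (e.g. $(d_1,d_2)=(a_1,0)$, complement $(0,a_2)$), but, as the paper does, one simply observes that whenever the sum is not strictly negative the claimed inequality is immediate from $m(a_1,a_2)\le 0$ in part (i).
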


\begin{proof}
$(i)$ It follows directly from Lemma \ref{negative}. $(ii)$ By definition of $m(d_1^n,d_2^n)$,
there exists for any $\eps >0$,
$(u_1^n, u_2^n) \in S(d_1^n, d_2^n) \cap \mathcal {B}(\rho_0)$ such that $J(u_1^n, u_2^n) \leq m(d_1^n, d_2^n) + \eps.$
Setting $ \displaystyle {w_i^n := \frac{u_i^n}{\|u_i^n\|_2}d_i^{\frac12}}$ for $i =1, 2,$ we have $(w_1^n, w_2^n) \in S(d_1,d_2)$ and
$$
\|\nabla w_1^n\|_2^2 + \|\nabla w_2^n\|_2^2
= \|\nabla u_1^n\|_2^2 + \|\nabla u_2^n\|_2^2 + o(1)
< 2 \rho_0.
$$
{Consequently from the definition \eqref{another-min} we get}
\begin{align} \label{m1}
m(d_1, d_2)  \leq J(w_1^n, w_2^n)
                                = J(u_1^n, u_2^n) + o(1)
                                \leq m(d_1^n, d_2^n) + \eps +o(1),
\end{align}
and thus {\eqref{m1} gives that}
$
m(d_1, d_2) \leq m(d_1^n, d_2^n) + o(1).
$
Similarly, reversing the argument, it follows that
$
m(d_1^n, d_2^n) \leq m(d_1, d_2) + o(1).$
Now we deal with $(iii)$. { First we observe that, since for any $0 \leq c_i  \leq a_i$, $i=1,2 $, $m(c_1,c_2) \leq 0$, we can assume without restriction that  $m(d_1,d_2) + m(a_1-d_1,a_2-d_2) <0$. Now for } any $\eps >0$, there exist
$({\vphi}_1, {\vphi}_2) \in S(d_1,d_2) \cap \mathcal{B}(\rho_0)$ and $ (\psi_1, \psi_2) \in S(a_1-d_1,a_2-d_2)\cap \mathcal{B}(\rho_0)$
such that
\begin{equation} \label{ineq-j}
J({\vphi}_1, {\vphi}_2) \leq m(d_1, d_2) + \frac{\eps}{2} \quad \mbox{and} \quad
J(\psi_1, \psi_2) \leq m(a_1-d_1, a_2-d_2) + \frac{\eps}{2}.
\end{equation}
Setting
$
w_i = \{\vphi_i, \psi_i\}^{\star}$ for $ i = 1, 2$, it follows
 from, Lemma \ref{Ikoma} (iii)-(iv), that $(w_1, w_2) \in S(a_1,a_2)$ and
\begin{equation*}
\|\nabla w_1\|_2^2 + \|\nabla w_2\|_2^2
\leq \sum_{i = 1}^2 \|\nabla \vphi_i\|_2^2 + \|\nabla \psi_i\|_2^2.
\end{equation*}
If $\|\nabla w_1\|_2^2 + \|\nabla w_2\|_2^2 < \rho_0$, using {Lemmas \ref{Ikoma} and  \ref{Ikoma1} we have, from \eqref{ineq-j},}
\begin{align*}
m(a_1, a_2)  \leq J(w_1, w_2)
                     \leq J(\vphi_1, \vphi_2) + J(\psi_1, \psi_2)
                     \leq m(d_1, d_2)
                     + m (a_1-d_1, a_2-d_2) + {\eps},
\end{align*}
from which it follows that
$
m(a_1, a_2) \leq m (d_1, d_2) + m(a_1-d_1, a_2-d_2).
$
Otherwise $\rho_0 \leq \|\nabla w_1\|_2^2 + \|\nabla w_2\|_2^2 < 2\rho_0$ and in view of \eqref{l32}, we get
\begin{align*}
0 \leq  J(w_1, w_2)
\leq J(\vphi_1, \vphi_2)
+ J(\psi_1, \psi_2)\leq m (d_1, d_2)
+ m(a_1-d_1, a_2-d_2) + {\eps},
\end{align*}
which is impossible { since $m(d_1,d_2) + m(a_1-d_1,a_2-d_2) <0$.}
\end{proof}

\begin{lem}\label{conv1}
Assume that $(H_0)$ or $(H_1)$ holds. Any minimizing sequence to \eqref{i3} is, up to translation, compact in $L^p(\R^N)\times L^p(\R^N)$ for any $2<p<2^*$ as $0 < \beta \leq \beta_{0}$.
\end{lem}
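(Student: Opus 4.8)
The plan is to run a concentration-compactness argument for minimizing sequences of \eqref{i3}, adapted to the open-ball constraint $\mathcal{B}(\rho_0)$, using Shibata's rearrangement (Lemmas \ref{Ikoma} and \ref{Ikoma1}) together with the sub-additivity and continuity of $m(\cdot,\cdot)$ from Lemma \ref{another-sub-add}. First I would reduce to \emph{nonnegative} minimizing sequences: if $\{(u_1^n,u_2^n)\}$ is a minimizing sequence for \eqref{i3}, then $\{(|u_1^n|,|u_2^n|)\}$ still lies in $S(a_1,a_2)\cap\mathcal{B}(\rho_0)$ and $J(|u_1^n|,|u_2^n|)\le J(u_1^n,u_2^n)$, so it is again minimizing; once the conclusion is known for it, a subsequence with $u_i^n(\cdot+y_n)\rightharpoonup v_i$ in $H^1(\R^N)$ and a.e.\ forces $|v_i|$ to coincide with the $L^p$-limit of $|u_i^n(\cdot+y_n)|$, and then $H^1$-boundedness together with the tightness of $\{|u_i^n(\cdot+y_n)|\}$ transfer the conclusion to $\{(u_1^n,u_2^n)\}$. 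So I assume $u_i^n\ge 0$. I would then record that such a sequence is bounded in $H^1(\R^N)\times H^1(\R^N)$ and, more importantly, eventually lies \emph{strictly} inside the ball: by \eqref{x}, $J(u_1^n,u_2^n)\ge g(\rho_n)$, where $\rho_n:=\|\nabla u_1^n\|_2^2+\|\nabla u_2^n\|_2^2$ and $g$ is the function of $\rho_n$ on the last line of \eqref{x}; since the choices of $\rho_0,\beta_0$ in Lemma \ref{mpgeo1} make $g$ bounded below by a positive constant on an interval $[\rho_1,\rho_0]$ with $\rho_1<\rho_0$ (compare \eqref{locmin}), while $J(u_1^n,u_2^n)\to m(a_1,a_2)<0$, it follows that $\rho_n<\rho_1$ for $n$ large. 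This ``interior room'' is precisely what will make the rearrangement step work despite the fact, stressed in the introduction, that a sum of two elements of $\mathcal{B}(\rho_0)$ need not belong to $\mathcal{B}(\rho_0)$.

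Next I would run concentration-compactness for the densities $|u_1^n|^2+|u_2^n|^2$. \emph{Vanishing} is impossible: then $u_i^n\to 0$ in $L^{p_i}(\R^N)$ by Lions' lemma and, by \eqref{ga2}, $\int_{\R^N}|u_1^n|^{r_1}|u_2^n|^{r_2}\,dx\to 0$, so $\liminf_n J(u_1^n,u_2^n)\ge 0>m(a_1,a_2)$. Hence there are translations $y_n$ such that, along a subsequence, $(u_1^n(\cdot+y_n),u_2^n(\cdot+y_n))\rightharpoonup(u_1,u_2)\neq(0,0)$ in $H^1(\R^N)\times H^1(\R^N)$. Writing $b_i:=\|u_i\|_2^2\le a_i$ and $z_i^n:=u_i^n(\cdot+y_n)-u_i$, the Brezis-Lieb splitting of the $L^{p_i}$ terms, Lemma \ref{Lieb} for the coupling term, and $\nabla u_i^n\rightharpoonup\nabla u_i$ in $L^2$ give $J(u_1^n(\cdot+y_n),u_2^n(\cdot+y_n))=J(u_1,u_2)+J(z_1^n,z_2^n)+o(1)$ and $\|z_i^n\|_2^2=a_i-b_i+o(1)$. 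Since, by the interior room, $(u_1,u_2)$ and (after normalising to the exact masses) $(z_1^n,z_2^n)$ lie in $\mathcal{B}(\rho_0)$, the definition of $m$ gives $J(u_1,u_2)\ge m(b_1,b_2)$ and, with the continuity of $m$ from Lemma \ref{another-sub-add}(ii), $\liminf_n J(z_1^n,z_2^n)\ge m(a_1-b_1,a_2-b_2)$; the sub-additivity Lemma \ref{another-sub-add}(iii) then forces these to be equalities: $J(u_1,u_2)=m(b_1,b_2)$, $m(a_1,a_2)=m(b_1,b_2)+m(a_1-b_1,a_2-b_2)$, and, along a further subsequence, $J(z_1^n,z_2^n)\to m(a_1-b_1,a_2-b_2)$.

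It remains to handle the two possibilities. If $(b_1,b_2)=(a_1,a_2)$, then $z_i^n\to 0$ in $L^2(\R^N)$, hence in $L^p(\R^N)$ for $2<p<2^*$ by the Gagliardo-Nirenberg inequality \eqref{ga1} (using $H^1$-boundedness), so $u_i^n(\cdot+y_n)\to u_i$ in $L^p(\R^N)$ --- the claim. The obstacle is the case $(b_1,b_2)\lneq(a_1,a_2)$, and it is where the rearrangement becomes decisive. When the missing mass $(a_1-b_1,a_2-b_2)$ is nonzero in both components --- which, under $(H_0)$, is automatic whenever it is nonzero --- we have $J(z_1^n,z_2^n)\to m(a_1-b_1,a_2-b_2)<0$; replacing $z_1^n,z_2^n$ by their Schwarz symmetrisations (which does not increase $J$ and preserves the interior bound) and normalising, we get a radially symmetric non-increasing minimizing sequence for $m(a_1-b_1,a_2-b_2)$, which is compact in $L^p(\R^N)$, $2<p<2^*$, by the uniform decay of non-increasing radial functions (valid for all $N\ge 1$); its limit $\bar v=(\bar v_1,\bar v_2)$ is a radial function realising $m(a_1-b_1,a_2-b_2)$, whose nonzero components are strictly positive by the strong maximum principle. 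Symmetrising $(u_1,u_2)$ as well and forming the Shibata merge $W_i:=\{u_i,\bar v_i\}^{\star}$, one checks --- using Lemma \ref{Ikoma}(iii)--(iv) and the interior room --- that $(W_1,W_2)\in S(a_1,a_2)\cap\mathcal{B}(\rho_0)$, and Lemma \ref{Ikoma1} gives $J(W_1,W_2)\le J(u_1,u_2)+J(\bar v_1,\bar v_2)=m(b_1,b_2)+m(a_1-b_1,a_2-b_2)=m(a_1,a_2)$; hence $J(W_1,W_2)=m(a_1,a_2)$ and all inequalities used are equalities. The equality case of Lemma \ref{Ikoma}(iv) then forces $(u_1,u_2)$ and $\bar v$ to have disjoint supports among the two components, so, after relabelling, $(u_1,u_2)=(u_1,0)$ and $\bar v=(0,\bar v_2)$ with $u_1>0$, $\bar v_2>0$, $W=(u_1,\bar v_2)$; but then $J(W_1,W_2)=J(u_1,0)+J(0,\bar v_2)-\beta\int_{\R^N}u_1^{r_1}\bar v_2^{r_2}\,dx<J(u_1,0)+J(0,\bar v_2)=m(b_1,b_2)+m(a_1-b_1,a_2-b_2)=m(a_1,a_2)$, contradicting $J(W_1,W_2)\ge m(a_1,a_2)$. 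There remains, under $(H_1)$ only, the possibility that the missing mass sits in a single component; there $m(a_1-b_1,a_2-b_2)=0$ while the corresponding one-component energy is strictly positive for $0<\|\nabla\cdot\|_2^2<\rho_0$, which forces $(z_1^n,z_2^n)$ to vanish, and mass shed in this way disappears in $L^p(\R^N)$ for $2<p<2^*$, so again $u_i^n(\cdot+y_n)\to u_i$ there. In every case $(u_1^n(\cdot+y_n),u_2^n(\cdot+y_n))$ converges in $L^p(\R^N)\times L^p(\R^N)$ for all $2<p<2^*$.

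The hard part, as anticipated, is excluding the escape of mass to spatial infinity under the open-ball constraint. It is handled by the two devices stressed above: the a priori bound $\rho_n<\rho_1<\rho_0$, which leaves enough room to re-assemble profiles without leaving $\mathcal{B}(\rho_0)$, and Shibata's rearrangement, whose gradient inequality (Lemma \ref{Ikoma}(iv)) serves both to keep the merged profile inside the ball and, via its equality case together with the strictly negative coupling term, to produce the final contradiction.
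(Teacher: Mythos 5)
Your overall architecture (exclude vanishing, extract one profile by translation, force the equality $m(a_1,a_2)=m(b_1,b_2)+m(a_1-b_1,a_2-b_2)$ via Lemma \ref{another-sub-add}, then contradict it with a Shibata merge) is close in spirit to the paper, and your ``interior room'' observation $\rho_n\le\rho_1<\rho_0$ is a correct and pleasant device that keeps merged profiles strictly inside $\mathcal{B}(\rho_0)$ without the annulus estimate \eqref{l32}. However, the crucial step has a genuine gap. You convert the remainder $(z_1^n,z_2^n)$ into an alleged minimizer $\bar v$ of $m(a_1-b_1,a_2-b_2)$ by Schwarz symmetrization plus the compact embedding of radial non-increasing bounded $H^1$ sequences into $L^p$, $2<p<2^*$. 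That embedding gives no compactness in $L^2$: a radial non-increasing minimizing sequence can still let part of its mass flatten and escape, disappearing in every $L^p$ with $p>2$ while carrying a fixed amount of $L^2$-norm away. Hence the limit satisfies only $\|\bar v_i\|_2^2\le a_i-b_i$, possibly strictly, and $J(\bar v)\le m(a_1-b_1,a_2-b_2)$ by lower semicontinuity; it need not belong to $S(a_1-b_1,a_2-b_2)$ nor attain the infimum. Asserting that it ``realises $m(a_1-b_1,a_2-b_2)$'' is precisely the compactness statement under proof, applied to the remainder problem, so the step is circular; and if $\bar v$ loses mass, the merged function $W$ no longer lies in $S(a_1,a_2)$ and the final comparison with $m(a_1,a_2)$ breaks down. (This can be repaired, e.g.\ by combining your equality chain with the monotonicity $m(a_1,a_2)\le m(d_1,d_2)$ that follows from Lemma \ref{another-sub-add}(iii) and $m\le 0$, run at the masses actually captured by $\bar v$; but that extra argument is exactly the missing heart of the proof.) The paper sidesteps this entirely: it never tries to realize the missing mass by a limit, but applies Lions' lemma a second time to the non-vanishing remainder, extracts a second nontrivial weak profile $(w_1,w_2)$, shows that \emph{both} weak profiles are minimizers at their own (possibly partial) masses, and only then merges the two profiles with the rearrangement of Lemmas \ref{Ikoma} and \ref{Ikoma1}; no $L^2$-compactness of the remainder is ever invoked.

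Two further points. The parenthetical claim that under $(H_0)$ a nonzero missing mass is automatically nonzero in both components is false (one may have $b_1<a_1$ and $b_2=a_2$); what matters for your case split is only that under $(H_0)$ one has $m(a_1-b_1,a_2-b_2)<0$ as soon as the missing mass is nonzero in at least one component, and then the merge must be run allowing $\bar v$ to have a single nonzero component, exactly as in the paper's Case 2, where the strict decrease comes from the coupling term rather than from the gradient inequality. Finally, to use the strict inequality of Lemma \ref{Ikoma}(iv) you need the nonzero components of the symmetrized $(u_1,u_2)$ and of $\bar v$ to be $C^1$, positive and non-increasing, which requires knowing first that they are constrained minimizers, hence solutions of \eqref{sys1} and regular; for $(u_1,u_2)$ this is fine, but for $\bar v$ it again presupposes the attainment discussed above. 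Your treatment of the $(H_1)$ sub-case where the missing mass sits in a single component (energy of the remainder tends to $0$, coercivity on the small ball forces its gradient to vanish, hence $L^p$-vanishing for $p>2$) is essentially correct, though it deserves the explicit Gagliardo--Nirenberg estimate you only hint at.
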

\begin{proof}
The proof follows closely the one of \cite[Lemma 3.3]{GJ}.
Let
$\{(v_1^n, v_2^n)\}$ be a minimizing sequence to \eqref{i3}.
If
$$
\sup_{y\in \R^N}\int_{B(y, R)}|v_1^n|^2+|v_2^n|^2\, dx = o(1),
$$
for some $R>0$, then $v_i \rightarrow 0$ in $L^p(\R^N) $ for $2 < p < 2^*, i = 1,2$, see \cite [Lemma I. 1] {Li2}. This contradicts the property $m(a_1, a_2) < 0$, obtained in Lemma \ref{another-sub-add} (i).
Thus, there exist a $\gamma_0 >0$ and a sequence $\{y_n\} \subset \R^N$ such that
$$
\int_{B(y_n, R)}|v_1^n|^2+|v_2^n|^2\, dx \geq \gamma_0 > 0,
$$
and we deduce that $\left(v_1^n(x-y_n), v_2^n(x-y_n)\right) \rightharpoonup (v_1, v_2) \neq (0, 0)$ in $H^1(\R^N) \times H^1(\R^N)$.
Our aim is to prove that $w_i^n(x) := v_i^n(x) - v_i(x+y_n) \rightarrow 0$ in $L^p(\R^N)$ for $2 < p < 2^*, i = 1, 2$ and so we suppose by contradiction that there exists a $2<q<2^*$ such that $(w_1^n, w_2^n) \nrightarrow(0, 0)$ in $L^q(\R^N) \times L^q(\R^N).$ Still using \cite [Lemma I. 1] {Li2} it follows that there exists a sequence
$\{z_n\} \subset \R^N$ such that
$
\left(w_1^n(x-z_n), w_2^n(x-z_n)\right)\rightharpoonup (w_1, w_2)\neq(0, 0)
$
in $H^1(\R^n)\times H^1(\R^N).$

Now, combining Lemma \ref{Lieb}, the Brezis-Lieb Lemma and the translational invariance, we see that
\begin{equation}\label{eqj}
\begin{split}
&J(v_1^n, v_2^n) = J(v_1^n(x-y_n), v_2^n(x-y_n)) \\
&= J(v_1^n(x-y_n)-v_1, v_2^n(x-y_n) - v_2) + J(v_1, v_2) + o(1) \\
&= J(w_1^n(x-y_n), {w_2^n(x -y_n)} + J(v_1, v_2) + o(1) \\
&= J(w_1^n(x-z_n)-w_1, w_2^n(x-z_n)-w_2) + J(w_1, w_2) + J(v_1, v_2) + o(1),
\end{split}
\end{equation}
and
\begin{align*}
a_i = \|v_i^n(x-y_n)\|_2^2
                         &= \|v_i^n(x-y_n)-v_i\|_2^2 + \|v_i\|_2^2 + o(1)\\
                         &= \|w_i^n(x-z_n)-w_i+w_i\|_2^2 + \|v_i\|_2^2 + o(1)\\
                         &= \|w_i^n(x-z_n)-w_i\|_2^2 + \|w_i\|_2^2 + \|v_i\|_2^2 + o(1).
\end{align*}

Setting for $i=1,2$, $b_i := a_i - \|w_i\|_2^2 - \|v_i\|_2^2$ we then have $\|w_i^n(x-z_n)-w_i\|_2^2 = b_i + o(1).$ Thus
recording that $J(v_1^n,v_2^n) \to m(a_1,a_2)$, in view of \eqref{eqj} and Lemma \ref{another-sub-add} $(ii)$ we get
\beq[3.16]
m(a_1, a_2) \geq  J(w_1, w_2) + J(v_1, v_2) +  m(b_1, b_2).
\eeq
If $J(w_1, w_2) > m(\|w_1\|_2^2, \|w_2\|_2^2)$ or $J(v_1, v_2) > m(\|v_1\|_2^2, \|v_1\|_2^2),$ then,
from \eqref{eq:3.16} and Lemma \ref{another-sub-add} $(iii)$ , it follows
$$
m(a_1, a_2) >  m(\|w_1\|_2^2, \|w_2\|_2^2) + m(\|v_1\|_2^2, \|v_2\|_2^2) + m(b_1, b_2) \geq m(a_1 ,a_2)
$$
which is impossible. Hence $J(w_1, w_2) = m(\|w_1\|_2^2, \|w_2\|_2^2)$ and $J(v_1, v_2) = m(\|v_1\|_2^2, \|v_2\|_2^2).$

We denote by $\tilde{v}_i, \tilde{w}_i$ the classical Schwarz symmetric-decreasing rearrangement of $v_i, w_i$ for $i = 1, 2,$. Since
$$
 \|\tilde{v}_i\|_2^2 = \|v_i\|_2^2 ,\quad
\|\tilde{w}_i\|_2^2 = \|w_i\|_2^2,
$$
$$
J(\tilde{v}_1, \tilde{v}_2) \leq J(v_1, v_2), \quad
J(\tilde{w}_1, \tilde{w}_2) \leq J(w_1, w_2)
$$
see for example \cite{LL}, we deduce that
{$J(\tilde{v}_1, \tilde{v}_2) =   m(\|v_1\|_2^2, \|v_2\|_2^2)$ } and $J(\tilde{w}_1, \tilde{w}_2) = m(\|w_1\|_2^2, \|w_2\|_2^2).
$
Therefore, $(\tilde{v}_1, \tilde{v}_2), (\tilde{w}_1, \tilde{w}_2)$ are solutions to the system \eqref{sys1} for some $(\lambda_1, \lambda_2 ) \in \R^2$ and from standard regularity results we have that
$\tilde{v}_i, \tilde{w}_i \in C^2(\R^N)$ for $i = 1, 2.$

At this point Lemma \ref{Ikoma} comes into play. Without restriction, we may assume  $v_1 \neq 0$. We  distinguish two cases.

\noindent{\it Case 1: $v_1 \neq 0$ and $w_1 \neq 0$.} \\
By virtue of Lemma \ref{Ikoma} $ (ii), (iv), (v)$,
$$
\int_{\R^N} | {\nabla \{ \tilde{v}_1,  \tilde{w}_1\}}^{\star}| \, dx
         < \int_{\R^N} |\nabla \tilde{v}_1|^2 + |\nabla \tilde{w}_1|^2 \, dx
         \leq \int_{\R^N} |\nabla v_1|^2 + |\nabla w_1|^2 \, dx.
$$
{{Also from Lemma \ref{Ikoma1},
\begin{align}\label{zut}
\int_{\R^N}|\{\tilde{v}_1, \tilde{w}_1\}^{\star}|^{r_1}|\{\tilde{v}_2, \tilde{w}_2\}^{\star}|^{r_2} \, dx
        \geq \int_{\R^N} |v_1|^{r_1} |v_2|^{r_2}
              + |w_1|^{r_1} |w_2|^{r_2} \, dx,\nonumber
\end{align}
}
and thus }
\beq [re1]
J(v_1, v_2) + J(w_1, w_2) > J(\{\tilde{v}_1, \tilde{w}_1\}^{\star}, \{\tilde{v}_2, \tilde{w}_2\}^{\star}).
\eeq
Also from Lemma \ref{Ikoma} $(iii)$, for $i=1,2$,
\beq [re2]
{\int_{\R^N} |\{\tilde{v}_i, \tilde{w}_i\}^{\star}|^2 \, dx
= \int_{\R^N} |\tilde{v}_i|^2 + |\tilde{w}_i|^2 \, dx
= \int_{\R^N} |v_i|^2 + |w_i|^2 \, dx =a_i-b_i},
\eeq
and hence taking \eqref{eq:3.16}-\eqref{eq:re2} and Lemma \ref{another-sub-add} $(iii)$ into consideration, one obtains the contradiction
$$m(a_1, a_2) > m(b_1, b_2) + m(a_1-b_1, a_2-b_2) \geq m(a_1, a_2).$$

\noindent{\it Case 2: $v_1 \neq 0$, $w_1 =0$ and $w_2 \neq 0$.} \\
If $v_2 \neq 0$, we can reverse the role of $v_1, w_1$ and $v_2, w_2$ in {\it Case 1} to get a contradiction. Thus, we suppose that
$v_2 = 0.$

Then using  Lemma \ref{Ikoma} $(ii)$-$(v)$ { and Lemma \ref{Ikoma1} we get}
\beq [re3]
\begin{split}
J(\{\tilde{v}_1, 0\}^{\star}, {\{0, \tilde{w}_2 \}}^{\star})
&\leq  \frac12 \int_{\R^N} |\nabla \tilde{v}_1|^2 +  |\nabla \tilde{w}_2|^2 \, dx
        - \frac{\mu_1}{p_1} \int_{\R^N} |\tilde{v}_1|^{p_1} \, dx\\
&- \frac{\mu_2}{p_2} \int_{\R^N} |\tilde{w}_2|^{p_2} \, dx
        -\be \int_{\R^N} |\tilde{v}_1|^{r_1} |\tilde{w}_2|^{r_2} \, {dx}\\
& <J(\tilde{v}_1, 0) + J(0, \tilde{w}_2) \leq J(v_1, 0) + J(0, w_2),
\end{split}
\eeq
with

$ ||\{\tilde{v}_1, 0\}^{\star}||_2^2 = ||\tilde{v}_1||_2^2 = ||v_1||_2^2$  and  { $||\{0, \tilde{w}_2\}^{\star}||_2^2 = ||\tilde{w}_2||_2^2 = ||w_2||_2^2.$}
{Note that since $\tilde{v}_i \neq 0$ and $\tilde{w}_i \neq 0$ are the Schwarz symmetric decreasing rearrangement necessarily $\int_{\R^N} |\tilde{v}_1|^{r_1} |\tilde{w}_2|^{r_2} \,  dx >0$ and this guarantees the strict inequality in \eqref{eq:re3}. }
Thus using \eqref{eq:3.16}, \eqref{eq:re3} and Lemma \ref{another-sub-add}, we also have
$$m(a_1, a_2) > m(b_1, b_2) + m(a_1-b_1, a_2-b_2) \geq m(a_1, a_2).$$
The contradictions obtained in {\it Cases} 1 and 2 indicate that $w_i^n(x) = v_i^n(x) - v_i(x+y_n) \rightarrow 0$ in $L^p(\R^N)$ for $2 < p < 2^*, i = 1, 2.$
\end{proof}

\begin{proof}[Proof of Theorem \ref{thm1}$(i)$]

Let $\{(v_1^n, v_2^n)\} $ be an arbitrary minimizing sequence to \eqref{i3}. In view of  Lemma \ref{conv1}, there exists $(v_1, v_2)  \in H^1(\R^N) \times H^1(\R^N) $ such that, up to a subsequence, $(v_1^n, v_2^n) \rightharpoonup (v_1, v_2)$ in $H^1(\R^N) \times H^1(\R^N)$ and $(v_1^n, v_2^n) \rightarrow (v_1, v_2)$ in $L^p(\R^N) \times L^p(\R^N) $ for $2 < p < 2^*.$ Hence, by the weak lower semi-continuity of the norm,
$\|\nabla v_1\|_2^2 + \|\nabla v_2\|_2^2 < \rho_0$, namely $(v_1,v_2) \in \mathcal{B}(\rho_0)$, and $J(v_1, v_2) \leq m(a_1, a_2) <0$,
from which we deduce  that $(v_1, v_2) \neq (0, 0)$. To show the compactness of $\{(v_1^n, v_2^n)\}$ in $H^1(\R^N) \times H^1(\R^N)$, it suffices to prove that $(v_1, v_2) \in S(a_1,a_2)$. Assume by contradiction that  $\|v_1\|_2^2 := \bar{a}_1 < a_1$ or $\|v_2\|_2^2 := \bar{a}_2 < a_2.$  Then by the definition \eqref{another-min},
$m(\bar{a}_1, \bar{a}_2) \leq J(v_1, v_2)$. At this point, in light of Lemma \ref{another-sub-add} (i) and (iii), we get
$$
J(v_1, v_2) \leq m(a_1, a_2) \leq m(\bar{a}_1,\bar{a}_2) + m(a_1 - \bar{a}_1, a_2 - \bar{a}_2) < m(\bar{a}_1,\bar{a}_2) \leq J(v_1,v_2).
$$
This contradiction proves that $(v_1, v_2) \in S(a_1,a_2)$. To end the proof we note that without restriction we can choose a minimizer $(v_1, v_2)$ of $m(a_1,a_2)$ with $v_1 \geq 0 $ and $v_2 \geq 0$. {Note that $(v_1, v_2)$ is a solution to \eqref{sys1} for some $(\lambda_1, \lambda_2) \in \R^2$, where the parameters $\lambda_1, \lambda_2$ are determined by the Lagrange's multiplier rule. Now by the elliptic regularity theory, we know that $v_1, v_2 \in C^2(\R^N)$. Since $\beta,\mu_1 >0$, then $-\Delta v_1 \geq \lambda_1 v_1$, hence using the maximum principle \cite[Theorem 2.10]{HL} we deduce that $v_1>0$. Similarly, we can obtain that $v_2>0$. Thus the proof is complete.}
\end{proof}

\begin{proof}[Proof of Theorem \ref{thm2}$(i)$]
Let us show that, up to translations, every minimizing sequence $\{(v_1^n, v_2^n)\} $ to  \eqref{i3} is {convergent}. To this aim we first deduce from Lemma \ref{nonnegative} that it is not restrictive to assume that the two components are non-negative. {Also from \cite[Corollary 1.3]{Gh} we see that there exists another minimizing sequence $\{(\tilde{v}_1^n, \tilde{v}_2^n)\} \subset S(a_1,a_2)$ which is a Palais-Smale sequence for $J$ restricted to $S(a_1, a_2)$, and such that $||(\tilde{v}_1^n, \tilde{v}_2^n) - (v_1^n, v_2^n)|| \to 0$ in $H^1(\R^N) \times H^1(\R^N)$}. Because of this convergence, we have in particular that $(\tilde{v}_1^n)^- \to 0$ and $(\tilde{v}_2^n)^- \to 0$ as $n \to \infty$ and we obtain that $(\tilde{v}_1^n, \tilde{v}_2^n) \rightharpoonup (v_1,v_2)$ in $H^1(\R^N) \times H^1(\R^N)$ with $v_1 \geq 0$ and $v_2 \geq 0$. Furthermore, it results from Lemma \ref{strongconv} that $(v_1, v_2)$ satisfies \eqref{sys1}-\eqref{sys2} with some $(\lambda_1, \lambda_2) \in \R^2$, from which we infer that $Q(v_1, v_2)=0$. {From Lemma \ref{conv1}, we get that, {up to translations,} $(\tilde{v}_1^n, \tilde{v}_2^n) \to (v_1,v_2)$ in $L^p(\R^N) \times L^p(\R^N)$ for $2 <p<2^*$, then by the weak semicontinuity of the norm, we obtain that $J(v_1,v_2) \leq m(a_1,a_2) <0$}. It remains to show that $(v_1,v_2) \in S(a_1,a_2)$. By Lemma \ref{la} we can assume without restriction that $\lambda_1 <0$ and then Lemma  \ref{strongconv} gives $v_1 \in S(a_1)$. If $\la_2 <0$ we also have that  $v_2 \in S(a_2).$ Let us thus assume by contradiction that  $\la_2 \geq 0$. In the case $1 \leq N \leq 4$, since
$$
- \Delta v_2 = \lambda_2 v_2 + \mu_2 v_2^{p_2-1} + \beta r_2 v_1^{r_1}v_2^{r_2 -1} \geq 0,
$$
by {the Liouville's result} recalled in Lemma \ref{liouville} we obtain that $v_2=0$. It then follows that $J(v_1, v_2)= J(v_1, 0)$ with $v_1 \in S(a_1)$ satisfying $- \Delta v_1 = \lambda_1 v_1 + \mu_1 v_1^{p_1 -1}$.
{Thus, since $p_1 >2 + \frac 4N$, we deduce from \cite[Lemma 3.1]{BJ} that $J(v_1,0) >0$ and this provides the contradiction. }
If we now assume that
$N \geq 5$, testing the second equation of (\ref{sys1}) with $v_2$, and integrating in $\R^N$, we get, because $\lambda_2 \geq 0$, that
\begin{equation}\label{test-lagrange}
\int_{\R^N} |\nabla v_2|^2 \, dx - \mu_2 \int_{\R^N} |v_2|^{p_2} \, dx - \beta r_2 \int_{\R^N} |v_1|^{r_1} |v_2|^{r_2} \, dx \geq 0.
\end{equation}
Now jointing (\ref{test-lagrange}) with $Q(v_1,v_2)=0$, we obtain that
\begin{align} \label{v}
\begin{split}
0 & \geq  \int_{\R^N} |\nabla v_1|^2  - \frac{\mu_1}{p_1} \left( \frac{p_1}{2}-1\right)N  \int_{\R^N} |v_1|^{p_2} \, dx
 + \left( \mu_2 - \frac{\mu_2}{p_2}\left(\frac{p_2}{2} -1\right) N \right) \int_{\R^N} |v_2|^{p_2} \, dx\\
&  + \beta \left( r_2 - \left(\frac{r_1 + r_2}{2} -1\right) N \right) \int_{\R^N} |v_1|^{r_1} |v_2|^{r_2} \, dx.
\end{split}
\end{align}
Note that the coefficient of $\int_{\R^N} |v_2|^{p_2} \, dx$ is positive. {Thanks to $v_1 \neq 0$}, from the Gagliardo-Nirenberg inequality \eqref{ga1}, we can assume without restriction that
\begin{align}\label{con}
{\int_{\R^N} |\nabla v_1|^2  - \frac{\mu_1}{p_1}\Big(\frac{p_1}{2}-1\Big)N  \int_{\R^N} |v_1|^{p_1} \, dx > 0}
\end{align}
by taking, if necessary, $\rho_0 >0$ (and thus $\beta_0 >0$) smaller in Lemma \ref{mpgeo1}. {Thus by taking into consideration \eqref{con}, we obtain a contradiction from \eqref{v}}, since we have assumed that ${r_2 \geq  \left(\frac{r_1 + r_2}{2} -1\right)N}$. Knowing that $\lambda_2 <0$, we deduce that $v_2 \in S(a_2)$ and then we conclude as before that $v_1 >0$ and $v_2 >0$.
\end{proof}

\section{Proofs of Theorem \ref{thm1}(ii) and Theorem \ref{thm2}(ii)}\label{pro2}

To obtain our second solution and in order to benefit from additional compactness we replace $H^1(\R^N) \times H^1(\R^N) $ by $H^1_{rad}(\R^N) \times H^1_{rad}(\R^N)$.
It is well-known that the subspace of $H^1(\R^N)$ consisting of radially symmetric functions
$H^1_{rad}(\R^N)$ is compactly embedded into $L^q(\R^N)$ for $2 <q < 2^*$ and $N \geq 2$. Also it is classical that a constrained critical point of $J$ defined on $H^1_{rad}(\R^N) \times H^1_{rad}(\R^N)$ is a constrained critical point of $J$ defined on $H^1(\R^N) \times H^1(\R^N)$. Accordingly we define $S_{rad}(a_1,a_2) := S(a_1,a_2) \cap {H^1_{rad}(\R^N) \times H^1_{rad}(\R^N)}$.
\vspace{2mm}

 First we deal with the case where $(H_0)$ holds. By Lemmas \ref{negative} and \ref{mpgeo1} we know that there exists a
 $0 < \bar{\rho} = \bar{\rho}(a_1,a_2)  < \rho_0$ such that, for any $0 < \beta \leq \beta_0$,
\begin{align*}
\gamma(a_1, a_2): = \inf_{g \in \Gamma} \max_{t \in [0, 1]} J(g(t))> \max \{J(g(0)), J(g(1))\},
\end{align*}
where
\begin{align*}
\Gamma := \{g \in C([0, 1], S(a_1,a_2)): g(0) \in B(\bar{\rho}) , g(1) \notin \overline{B(\rho_0)} \mbox{ with } J(g(1)) < 0\}.
\end{align*}

\begin{lem}\label{lem121}
Assume that $(H_0)$ holds. Then, for any $0 < \beta \leq \beta_0$, there exists a Palais-Smale sequence $\{(u_1^n, u_2^n)\} \subset S(a_1,a_2)$ for $J$ restricted to $S_{rad}(a_1,a_2)$ at the level $\gamma(a_1,a_2)$ which satisfies  $(u_1^n)^- \to 0$, $(u_2^n)^- \to 0$ in $H^1(\R^N)$ and the property  $Q(u_1^n, u_2^n) \to 0$.
\end{lem}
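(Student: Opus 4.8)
The plan is to use the by-now classical device of adding an artificial dilation variable, introduced in \cite{Jeanjean}, together with the invariance of $J$ under replacement of each component by its modulus. Recalling the scaling $u^t(x)=t^{N/2}u(tx)$, I introduce the augmented functional $\widetilde J(s,u_1,u_2):=J(u_1^{e^s},u_2^{e^s})$, which is of class $C^1$ on $\R\times\big(H^1_{rad}(\R^N)\times H^1_{rad}(\R^N)\big)$, and I view it as restricted to the $C^1$ Hilbert manifold $\R\times S_{rad}(a_1,a_2)$. Accordingly I set $\widetilde\Gamma:=\{\widetilde g=(\sigma,g)\in C([0,1],\R\times S_{rad}(a_1,a_2)):\sigma(0)=\sigma(1)=0,\ g(0)\in\mathcal{B}(\bar{\rho}),\ g(1)\notin\overline{\mathcal{B}(\rho_0)},\ J(g(1))<0\}$ and $\widetilde\gamma(a_1,a_2):=\inf_{\widetilde g\in\widetilde\Gamma}\max_{t\in[0,1]}\widetilde J(\widetilde g(t))$. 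The first step is to verify that $\widetilde\gamma(a_1,a_2)=\gamma(a_1,a_2)$ and that this common value stays strictly above the values taken by $\widetilde J$ at the two endpoints of any admissible path. The inequality $\widetilde\gamma\le\gamma$ is obtained by inserting, for $g\in\Gamma$, the augmented path $(0,g)$; conversely, given $\widetilde g=(\sigma,g)\in\widetilde\Gamma$, the path $t\mapsto(g_1(t)^{e^{\sigma(t)}},g_2(t)^{e^{\sigma(t)}})$ belongs to $\Gamma$ — the endpoint requirements survive because $\sigma$ vanishes at $t=0,1$ and because the dilation preserves $S(a_1,a_2)$ — and it carries the same values of $J$, whence $\widetilde\gamma\ge\gamma$; the strict mountain-pass geometry is inherited from the one established through Lemmas \ref{negative} and \ref{mpgeo1} in the definition of $\gamma(a_1,a_2)$.

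The second step handles the sign. Since $J(u_1,u_2)=J(|u_1|,|u_2|)$ and $\|\nabla|u_i|\|_2=\|\nabla u_i\|_2$, the class $\widetilde\Gamma$ is stable under $(\sigma,g)\mapsto(\sigma,|g_1|,|g_2|)$, so there is a minimizing family $\{\widetilde g_n\}\subset\widetilde\Gamma$ for $\widetilde\gamma$ whose spatial components are everywhere non-negative and whose first coordinate is identically $0$. I then invoke a minimax principle producing Palais-Smale sequences localized near a chosen almost optimal family of paths — for instance the variant of \cite{Gh} already used in the proof of Theorem \ref{thm2}(i), applied to $\widetilde J$ on $\R\times S_{rad}(a_1,a_2)$. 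This yields $\{(s_n,u_1^n,u_2^n)\}$ with $\widetilde J(s_n,u_1^n,u_2^n)\to\widetilde\gamma=\gamma(a_1,a_2)$, with $(\widetilde J|_{\R\times S_{rad}})'(s_n,u_1^n,u_2^n)\to0$, and with distance to $\widetilde g_n([0,1])$ tending to $0$ in $\R\times H^1(\R^N)\times H^1(\R^N)$; from the localization one reads off $s_n\to0$ and $(u_1^n)^-\to0$, $(u_2^n)^-\to0$ in $H^1(\R^N)$.

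The third step converts this into the statement. Because $\partial_s\widetilde J(s,u_1,u_2)=Q(u_1^{e^s},u_2^{e^s})$ and the $(u_1,u_2)$-partial derivative of $\widetilde J$, taken tangentially to $S_{rad}(a_1,a_2)$, is $(J|_{S_{rad}})'$ evaluated at the dilated pair, the sequence $v^n:=((u_1^n)^{e^{s_n}},(u_2^n)^{e^{s_n}})$ is a Palais-Smale sequence for $J$ restricted to $S_{rad}(a_1,a_2)$ at the level $\gamma(a_1,a_2)$ with $Q(v^n)\to0$ (here $s_n\to0$ is used to control the norm of the dilation operator on $H^1$). Combining $J(v^n)=\gamma(a_1,a_2)+o(1)$ with $Q(v^n)=o(1)$ and the Gagliardo-Nirenberg inequalities \eqref{ga1}-\eqref{ga2}, the combination $J-\theta Q$ with $\theta:=[(\tfrac{r_1+r_2}{2}-1)N]^{-1}$ is coercive in the gradient norm under $(H_0)$ — the point being that $(\tfrac{p_i}{2}-1)N<2$ while $(\tfrac{r_1+r_2}{2}-1)N>2$ — so $\{v^n\}$ is bounded in $H^1(\R^N)\times H^1(\R^N)$, and hence so is $\{(u_1^n,u_2^n)\}$ since $e^{s_n}\to1$. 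Finally, as $e^{s_n}\to1$ on a bounded sequence, $v^n-(u_1^n,u_2^n)\to0$ in $H^1(\R^N)\times H^1(\R^N)$, so $\{(u_1^n,u_2^n)\}$ itself is a Palais-Smale sequence for $J$ restricted to $S_{rad}(a_1,a_2)$ at the level $\gamma(a_1,a_2)$, with $Q(u_1^n,u_2^n)\to0$ and $(u_i^n)^-\to0$, which is the assertion.

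The main obstacle is concentrated in the second step: one must run a minimax scheme that simultaneously lives on the augmented manifold (so that the $s$-direction delivers $Q\to0$) and produces a Palais-Smale sequence close to a conveniently chosen family of paths (so that the sign information survives), while guaranteeing that the output is bounded — and it is exactly here that assumption $(H_0)$, in particular $r_1+r_2>2+\tfrac4N$, enters. Checking that $\widetilde\gamma=\gamma$ and that the two-sided geometry through $\mathcal{B}(\bar{\rho})$ and $\mathcal{B}(\rho_0)$ is correctly transferred to the augmented problem is routine but needs a little care.
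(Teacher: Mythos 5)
Your proposal follows exactly the route that the paper itself points to (its ``proof'' of this lemma is only a reference to the standard technique of \cite{Jeanjean} and to \cite[Lemma 5.5]{BJ}): the auxiliary dilation variable and the identity $\partial_s\widetilde J(s,u_1,u_2)=Q(u_1^{e^s},u_2^{e^s})$, the equality of the augmented and original minimax levels, a Ghoussoub-type localized minimax applied to almost optimal paths whose components are nonnegative and whose $s$-coordinate vanishes, and the coercivity of $J-\theta Q$ under $(H_0)$ to get boundedness. In substance this is the intended argument and these steps are correct.

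Two points are, however, asserted too quickly. First, the localization only gives that the $H^1\times H^1$ distance of $(u_1^n,u_2^n)$ to pairs $(v_1^n,v_2^n)$ with $v_i^n\ge 0$ tends to $0$; since $(u_i^n)^-\le |u_i^n-v_i^n|$ pointwise, this yields $\|(u_i^n)^-\|_{2}\to 0$ (and a.e. convergence along a subsequence), but it does \emph{not} yield $(u_i^n)^-\to 0$ in $H^1(\R^N)$: one has $\nabla (u_i^n)^-=-\nabla u_i^n\,\chi_{\{u_i^n<0\}}$ and nothing in the localization controls $\int_{\{u_i^n<0\}}|\nabla v_i^n|^2$, so the $H^1$ smallness of the negative parts cannot simply be ``read off''. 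To obtain the statement as formulated you need an extra step, e.g. once the boundedness of the sequence and of the approximate Lagrange multipliers is available, pair the almost-criticality relation with $((u_1^n)^-,0)$ and $(0,(u_2^n)^-)$ and use \eqref{ga1} together with $\|(u_i^n)^-\|_2\to 0$; alternatively observe that only the a.e. nonnegativity of the weak limit is used in Lemma \ref{lem122}. Second, your closing claim that $v^n-(u_1^n,u_2^n)\to 0$ in $H^1$ because $e^{s_n}\to 1$ on a bounded sequence is not justified (the dilation action is not equicontinuous on bounded subsets of $H^1$: the gradient term scales by the dilation parameter but concentration can amplify the difference); this step is, however, unnecessary, since the dilated sequence $v^n$ already lies in $S_{rad}(a_1,a_2)$, is a Palais--Smale sequence for $J$ restricted to $S_{rad}(a_1,a_2)$ at the level $\gamma(a_1,a_2)$, satisfies $Q(v^n)\to 0$, and its negative parts inherit (up to uniformly bounded constants, as $s_n\to 0$) whatever smallness is proved for $(u_i^n)^-$, so you may simply take $v^n$ as the sequence asserted in the lemma.
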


\begin{proof}
The proof of {such a result} is now standard, similar statements appear in \cite{Jeanjean,BJ,BJS}. For a closely related version we refer to \cite[Lemma 5.5]{BJ}.
\end{proof}

\begin{lem}\label{lem122}
Assume that $(H_0)$ holds and that $0 < \beta \leq \beta_0$. Then there exists $(u_1, u_2) \in H_{rad}^1(\R^N)\times H_{rad}^1(\R^N)$ solving \eqref{sys1} for some $(\lambda_1,\lambda_2) \in \R^2$ such that $J(u_1,u_2) = \gamma(a_1,a_2)$.
\end{lem}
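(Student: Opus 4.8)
The plan is to extract a critical point from the Palais--Smale sequence $\{(u_1^n,u_2^n)\}\subset S_{rad}(a_1,a_2)$ furnished by Lemma~\ref{lem121}, which satisfies $J(u_1^n,u_2^n)\to\gamma(a_1,a_2)$, $Q(u_1^n,u_2^n)\to 0$ and $(u_i^n)^-\to 0$ in $H^1(\R^N)$. First I would show this sequence is bounded in $H^1(\R^N)\times H^1(\R^N)$. Writing $\sigma_i:=\big(\tfrac{p_i}{2}-1\big)N$ and $\tau:=\big(\tfrac{r_1+r_2}{2}-1\big)N$, so that $\sigma_i<2<\tau$ under $(H_0)$, the combination $J(u_1^n,u_2^n)-\tfrac1\tau Q(u_1^n,u_2^n)$ cancels the coupling term and leaves the gradient term with positive coefficient $\tfrac12-\tfrac1\tau$ and the terms $\|u_i^n\|_{p_i}^{p_i}$ with positive coefficients $1-\tfrac{\sigma_i}{\tau}$; using the Gagliardo--Nirenberg inequality \eqref{ga1} (with the masses fixed on $S(a_1,a_2)$) one gets
\[
\Big(\tfrac12-\tfrac1\tau\Big)\big(\|\nabla u_1^n\|_2^2+\|\nabla u_2^n\|_2^2\big)\le \gamma(a_1,a_2)+o(1)+C\sum_{i=1}^2\big(\|\nabla u_1^n\|_2^2+\|\nabla u_2^n\|_2^2\big)^{\sigma_i/2},
\]
and since $\sigma_i/2<1$ this forces boundedness.

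Next I would invoke Lemma~\ref{strongconv}: up to a subsequence $(u_1^n,u_2^n)\rightharpoonup(u_1,u_2)$, $(\la_1^n,\la_2^n)\to(\la_1,\la_2)$, and $(u_1,u_2)$ solves \eqref{sys1} with these multipliers. Since $N\ge 2$, the embedding $H^1_{rad}(\R^N)\hookrightarrow L^p(\R^N)$ is compact for $2<p<2^*$, so $(u_1^n,u_2^n)\to(u_1,u_2)$ in $L^p\times L^p$ for every such $p$; combined with the weak lower semicontinuity of the gradient norm this gives $J(u_1,u_2)\le\liminf_n J(u_1^n,u_2^n)=\gamma(a_1,a_2)$, while $(u_i^n)^-\to 0$ forces $u_1,u_2\ge 0$. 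I would also record that $\gamma(a_1,a_2)\ge\tfrac14\rho_0>0$, since every $g\in\Gamma$ meets $\partial\mathcal{B}(\rho_0)$, where $J\ge\tfrac14\rho_0$ by \eqref{locmin}. In particular $(u_1,u_2)\neq(0,0)$, because otherwise $Q(u_1^n,u_2^n)\to 0$ would give $\|\nabla u_1^n\|_2^2+\|\nabla u_2^n\|_2^2\to 0$ and hence $\gamma(a_1,a_2)=\lim_n J(u_1^n,u_2^n)=0$, a contradiction.

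The heart of the matter is then to prove that no mass is lost, i.e. $(u_1,u_2)\in S(a_1,a_2)$. By Lemma~\ref{la}, after relabelling the two components I may assume $\la_1<0$; the last assertion of Lemma~\ref{strongconv} then gives $u_1^n\to u_1$ in $H^1(\R^N)$, so $\|u_1\|_2^2=a_1$ and $u_1\neq 0$. If moreover $\la_2<0$, the same argument gives $u_2^n\to u_2$ in $H^1(\R^N)$, hence $(u_1,u_2)\in S(a_1,a_2)$, the convergence is strong in $H^1(\R^N)\times H^1(\R^N)$, and $J(u_1,u_2)=\gamma(a_1,a_2)$; done. So suppose $\la_2\ge 0$. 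When $2\le N\le 4$, $u_2\ge 0$ is smooth, satisfies $-\De u_2=\la_2 u_2+\mu_2 u_2^{p_2-1}+\be r_2 u_1^{r_1}u_2^{r_2-1}\ge 0$, and lies in $L^2(\R^N)$, which is within the range of the Liouville result Lemma~\ref{liouville}; hence $u_2\equiv 0$. Then $(u_1,0)$ solves \eqref{sys1}, so $Q(u_1,0)=0$, i.e. $\|\nabla u_1\|_2^2=\sigma_1\tfrac{\mu_1}{p_1}\|u_1\|_{p_1}^{p_1}$; letting $n\to\infty$ in $Q(u_1^n,u_2^n)$ (using $u_1^n\to u_1$ in $H^1$ and the strong $L^p$ convergence of $u_2^n$ to $0$) forces $\|\nabla u_2^n\|_2^2\to 0$, so that $J(u_1^n,u_2^n)\to\big(\tfrac12-\tfrac1{\sigma_1}\big)\|\nabla u_1\|_2^2<0$ because $\sigma_1<2$, contradicting $\gamma(a_1,a_2)>0$. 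Hence $\la_2<0$ after all, and we conclude as above.

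Finally, when $N\ge 5$ the Liouville argument is unavailable, and I would argue by contradiction assuming $\bar a_2:=\|u_2\|_2^2<a_2$ (recall $\|u_1\|_2^2=a_1$): the aim is to construct a path $g\in\Gamma$ with $\max_{[0,1]}J\circ g<\gamma(a_1,a_2)$, contradicting $\gamma(a_1,a_2)=\inf_{g\in\Gamma}\max_{[0,1]}J\circ g$. Starting from $J(u_1,u_2)\le\gamma(a_1,a_2)$ and $Q(u_1,u_2)=0$, one first produces a path $\bar g$ in $S(a_1,\bar a_2)$ through $(u_1,u_2)$ along which $J$ does not exceed $J(u_1,u_2)$, using the dilations $t\mapsto(u_1^t,u_2^t)$; then one ``adds the missing mass'' $a_2-\bar a_2$ to the second slot by splicing in a function on $S(a_2-\bar a_2)$ of negative energy for the scalar functional $v\mapsto\tfrac12\|\nabla v\|_2^2-\tfrac{\mu_2}{p_2}\|v\|_{p_2}^{p_2}$ (possible since $p_2<2+\tfrac4N$, cf.\ \eqref{i4}) and merging it with $\bar g$ through the rearrangement $\{\cdot,\cdot\}^\star$ of Lemma~\ref{Ikoma}, which keeps the gradients under control; the restrictions $p_i\le r_1+r_2-\tfrac2N$ ($i=1,2$) or $|p_1-p_2|\le\tfrac2N$ in Theorem~\ref{thm1}(ii) are exactly what guarantees that the resulting $g\in\Gamma$ satisfies $\max_{[0,1]}J\circ g<J(u_1,u_2)\le\gamma(a_1,a_2)$. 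Once $\|u_2\|_2^2=a_2$ is secured, testing the equations (as in Lemma~\ref{strongconv}) upgrades the $L^p$ convergence to strong $H^1(\R^N)\times H^1(\R^N)$ convergence and yields $J(u_1,u_2)=\gamma(a_1,a_2)$. The main obstacle is precisely this $N\ge 5$ path construction — building $\bar g$ with maximal energy $\le J(u_1,u_2)$ although the map $t\mapsto J(u_1^t,u_2^t)$ need not attain its maximum at $t=1$ under $(H_0)$, and then performing the mass transfer (in the spirit of \cite{JS}, via the Shibata rearrangement \cite{Sh2}) so as to strictly decrease the maximal energy — which is where the dimension- and exponent-dependent hypotheses intervene; for $2\le N\le 4$ everything reduces to the Liouville argument combined with $Q(u_1^n,u_2^n)\to 0$.
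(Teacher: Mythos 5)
There is a genuine gap: you never actually establish the equality $J(u_1,u_2)=\gamma(a_1,a_2)$ under the hypotheses of the lemma, which assume only $(H_0)$ and $0<\beta\le\beta_0$. From weak lower semicontinuity you only get $J(u_1,u_2)\le\gamma(a_1,a_2)$, and you then try to upgrade this to an equality by proving $(u_1,u_2)\in S(a_1,a_2)$ and strong $H^1$ convergence. But that mass-conservation step is precisely the content of the proof of Theorem \ref{thm1}(ii), not of this lemma: for $N\ge 5$ it requires the extra exponent restrictions ($p_i\le r_1+r_2-\frac2N$ or $|p_1-p_2|\le\frac2N$) which are not hypotheses of Lemma \ref{lem122}, it requires $J(u_1,u_2)>0$ in order to invoke Lemma \ref{lem123} (in the paper this positivity comes from the very identity $J(u_1,u_2)=\gamma(a_1,a_2)>0$ you are trying to prove, so your order of argument is circular at this point), and you yourself flag the construction of the path $\bar g$ with $\max_t J(u_1^t,u_2^t)\le J(u_1,u_2)$ as an unresolved ``main obstacle''. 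So for the full range of parameters covered by the lemma your argument does not close.

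The paper avoids all of this with a short direct argument you are missing: since $Q(u_1^n,u_2^n)\to 0$, one has
\[
\int_{\R^N}|\nabla u_1^n|^2+|\nabla u_2^n|^2\,dx=\sum_{i=1}^2\frac{\mu_i}{p_i}\Big(\frac{p_i}{2}-1\Big)N\int_{\R^N}|u_i^n|^{p_i}dx+\beta\Big(\frac{r_1+r_2}{2}-1\Big)N\int_{\R^N}|u_1^n|^{r_1}|u_2^n|^{r_2}dx+o(1),
\]
and by the compact embedding $H^1_{rad}\hookrightarrow L^q$ the right-hand side converges to the corresponding expression for $(u_1,u_2)$; comparing with $Q(u_1,u_2)=0$ (which holds because the weak limit solves \eqref{sys1}) gives $\|\nabla u_1^n\|_2^2+\|\nabla u_2^n\|_2^2\to\|\nabla u_1\|_2^2+\|\nabla u_2\|_2^2$, hence $J(u_1^n,u_2^n)\to J(u_1,u_2)$ and $J(u_1,u_2)=\gamma(a_1,a_2)$ — with no need to know $(u_1,u_2)\in S(a_1,a_2)$ and no dimension or exponent restrictions beyond $(H_0)$. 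Your boundedness argument, the use of Lemma \ref{strongconv}, the positivity $\gamma(a_1,a_2)\ge\frac14\rho_0$, and your Liouville discussion for $2\le N\le 4$ are fine (the latter essentially reproduces the proof of Theorem \ref{thm1}(ii)), but the level identification, which is the whole point of the lemma, should be done by the $Q$-limit argument above rather than deferred to the mass analysis.
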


\begin{proof}
The couple $(u_1,u_2)$ will be obtained as a weak limit of the Palais-Smale sequence whose existence is provided by Lemma \ref{lem121}. In this aim we first show that $\{(u_1^n,u_2^n)\}$ is bounded in $H^1(\R^N)\times H^1(\R^N)$. As we shall see this property follows from the fact that the functional $J$ restricted to the set where $Q=0$ is coercive. Indeed we can write, for any $\varepsilon >0$,
$$
\begin{aligned}
J(u_1,u_2)
 &= \frac{\varepsilon}{2} ||\nabla u_1^n||_2^2 + \frac{\varepsilon}{2} ||\nabla u_2^n||_2^2
     +  a_1(\varepsilon) ||u_1^n||_{p_1}^{p_1} + a_2(\varepsilon) ||u_2||_{p_2}^{p_2} \\
 &\hspace{1cm}
     + \be b(\varepsilon) \int_{\R^N}|u_1^n|^{r_1}|u_2^n|^{r_2} \, dx
     + \frac{1-\varepsilon}{2}Q(u_1^n,u_2^n).
\end{aligned}
$$
where
$$
a_1(\varepsilon) = \frac{(1 - \varepsilon)\mu_1N}{2 p_1}\left(\frac{p_1}{2}-1\right) - \frac{\mu_1}{p_1}, \quad a_2(\varepsilon) = \frac{(1 - \varepsilon)\mu_2N}{2 p_2} \left(\frac{p_2}{2}-1\right) - \frac{\mu_2}{p_2}
$$
and
$$
b(\varepsilon) = \frac{(1 - \varepsilon)N}{2} \left(\frac{r_1 + r_2}{2} -1\right)-1.
$$
The coefficients $a_i(\varepsilon)$, $i=1,2$ are strictly negative but the corresponding terms can be controlled by $\varepsilon||\nabla u_i^n||_2^2$ using the Gagliardo-Nirenberg inequality \eqref{ga1} because $p_i<2+\frac4N$ for $i=1,2$. Now since $r_1 + r_2 > 2 + \frac{4}{N}$ we also have that $ b(\varepsilon) >0$ for $\varepsilon >0$ small enough. Recalling that  $Q(u_1^n,u_2^n) \to 0$ the boundedness of our Palais-Smale sequence follows.

At this point using Lemma \ref{strongconv} we can assume that $u_i^n \rightharpoonup u_i$, $i=1,2$ in $H^1(\R^N)$ and that $u_i^n \to u_i$, $i=1,2$ in $L^q(\R^N)$ with $q \in ]2,2^*[.$ Lemma \ref{strongconv} also insures that $(u_1,u_2)$ is a solution of \eqref{sys1} for some $(\lambda_1, \lambda_2) \in \R^2$ and thus $Q(u_1,u_2)=0$.  Clearly the property $u_1 \geq 0$ and $u_2 \geq 0$ follows from $(u_1^n)^- \to 0$, $(u_2^n)^- \to 0$ in $H^1(\R^N)$. {Arguing as the proof of Theorem \ref{thm1} $(i)$, from the maximum principle \cite[Theorem 3.10]{HL} we get that $u_1, u_2 >0$}. It remains to show that $J(u_1,u_2) = \gamma(a_1,a_2)$. Since $Q(u_1^n,u_2^n) \to 0$ we have that
{
\begin{align}\label{Q1}
\begin{split}
\int_{\R^N} |\nabla u_1^n|^2 + |\nabla u_2^n|^2 \, dx & =
            \sum_{i =1}^2 \frac{\mu_i}{p_i} \left(\frac {p_i}{2} -1 \right)N \int_{\R^N}|u_i^n|^{p_i} \, dx \\
            &+\beta  \left(\frac {r_1 + r_2}{2} -1 \right)N\int_{\R^N} |u_1^n|^{r_1}|u_2^n|^{r_2} \, dx +o(1).
\end{split}
\end{align}
}
From the strong convergence in $L^q(\R^N)$ for $q \in ]2, 2^*[$,  the right hand side of \eqref{Q1} converges to
$$
\sum_{i=1}^2 \frac{\mu_i}{p_i} \left(\frac{p_i}{2} -1 \right)N \int_{\R^N}|u_i|^{p_i} \, dx
            + \beta  \left(\frac{r_1 + r_2}{2} -1 \right)N \int_{\R^N} |u_1|^{r_1}|u_2|^{r_2} \, dx.
$$
Thanks to $Q(u_1,u_2)=0$,
this gives that
$ \int_{\R^N} |\nabla u_1^n|^2 + |\nabla u_2^n|^2 \, dx \to \int_{\R^N} |\nabla u_1|^2 + |\nabla u_2|^2 \, dx$.
As a consequence, we deduce that $J(u_1^n, u_2^n) \to J(u_1,u_2)$. Thus recalling that $J(u_1^n,u_2^n) \to \gamma(a_1,a_2)$ we get $J(u_1,u_2) = \gamma(a_1,a_2).$
\end{proof}

\begin{proof}[Proof of Theorem \ref{thm1}$(ii)$] First we consider the case $2 \leq N \leq 4.$ In view of Lemma \ref{lem122}, it remains to prove that $(u_1,u_2) \in S(a_1,a_2)$. Recall that here we work in the radially symmetry space $H^1_{rad}(\R^N) \times H^1_{rad}(\R^N),$ thus in view of Lemma \ref{strongconv}, we only need to prove that $\lambda_1, \lambda_2 <0$. At this point, as in the proof of Theorem \ref{thm2}(i), reasoning by contradiction if necessary we assume that $\lambda_2 \geq 0$, we obtain that $J(u_1,u_2) = J(u_1,0)$ with  $u_1 \in S(a_1)$ solution to $- \Delta u_1 = \lambda_1 u_1 + \mu_1 u_1^{p_1 -1}$. Since  $p_1  < 2 + \frac 4N$, we necessarily have that $J(u_1,0) <0$, this provides the contradiction $J(u_1, 0)=\gamma(a_1, a_2)>0$. We then conclude as before.
\end{proof}

Let us now consider the case $N \geq 5$ where the Liouville's argument cannot be applied.

\begin{lem}\label{lem123}
Assume that $(H_0)$ {holds} and that either $p_i \leq r_1 + r_2 - \frac{2}{N}$, $i=1,2$ or $|p_1-p_2| \leq \frac{2}{N}$. { If $Q(v_1, v_2)=0$ and $J(v_1, v_2) >0$ }, then
\begin{equation}\label{keyp}
J(v_1,v_2) = \max_{t>0}J(v_1^t,v_2^t).
\end{equation}
\end{lem}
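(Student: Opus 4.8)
The plan is to reduce \eqref{keyp} to an analysis of the single–variable function $\varphi(t):=J(v_1^t,v_2^t)$. By the dilation formula \eqref{underscale},
\[
\varphi(t)=\frac{A}{2}\,t^{2}-\sum_{i=1}^{2}\frac{\mu_i}{p_i}B_i\,t^{\gamma_i}-\beta C\,t^{\delta},
\qquad \gamma_i:=\Big(\frac{p_i}{2}-1\Big)N,\quad \delta:=\Big(\frac{r_1+r_2}{2}-1\Big)N,
\]
where $A:=\|\nabla v_1\|_2^{2}+\|\nabla v_2\|_2^{2}$, $B_i:=\|v_i\|_{p_i}^{p_i}$, $C:=\int_{\R^N}|v_1|^{r_1}|v_2|^{r_2}\,dx$ are all $\ge 0$, and $0<\gamma_1,\gamma_2<2<\delta$ by $(H_0)$. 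By construction $\varphi'(1)=Q(v_1,v_2)=0$ and $\varphi(1)=J(v_1,v_2)>0$, and \eqref{keyp} is exactly the assertion that $\varphi(t)\le\varphi(1)$ for every $t>0$.

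First I would combine the two hypotheses. From $\varphi'(1)=0$ one has $A=\sum_i\frac{\mu_i}{p_i}\gamma_iB_i+\beta C\delta$; inserting this into $\varphi(1)>0$, multiplying by $2$ and rearranging yields the key estimate
\begin{equation*}
\beta C(\delta-2)\ >\ \sum_{i=1}^{2}\frac{\mu_i}{p_i}(2-\gamma_i)B_i\ \ge\ 0. \tag{$\ast$}
\end{equation*}
Eliminating $A$ in the same way inside $\varphi(1)-\varphi(t)$ gives the representation
\[
\varphi(1)-\varphi(t)=\sum_{i=1}^{2}\frac{\mu_i}{p_i}B_i\,f_{\gamma_i}(t)+\beta C\,f_{\delta}(t),
\qquad f_{\theta}(t):=\frac{\theta}{2}(1-t^{2})-(1-t^{\theta}).
\]

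The heart of the argument is the elementary pointwise inequality that, for every $t>0$ and all $0<\gamma<2<\delta$,
\begin{equation*}
f_{\gamma}(t)+\frac{2-\gamma}{\delta-2}\,f_{\delta}(t)\ \ge\ 0. \tag{$\ast\ast$}
\end{equation*}
To prove $(\ast\ast)$: since $f_{2}\equiv 0$, the mean value theorem applied to $\eta\mapsto f_{\eta}(t)$ gives $\frac{f_{\theta}(t)}{\theta-2}=\frac{f_{\theta}(t)-f_{2}(t)}{\theta-2}=\partial_{\eta}f_{\eta}(t)\big|_{\eta=\theta^{*}}$ for some $\theta^{*}$ strictly between $\theta$ and $2$; and since $\partial_{\eta}f_{\eta}(t)=\frac12(1-t^{2})+t^{\eta}\ln t$ satisfies $\partial_{\eta}^{2}f_{\eta}(t)=t^{\eta}(\ln t)^{2}\ge 0$, the map $\eta\mapsto\partial_{\eta}f_{\eta}(t)$ is non-decreasing, so $\frac{f_{\gamma}(t)}{\gamma-2}\le\partial_{\eta}f_{\eta}(t)\big|_{\eta=2}\le\frac{f_{\delta}(t)}{\delta-2}$, and $(\ast\ast)$ follows on multiplying by $\gamma-2<0$. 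Granting $(\ast\ast)$, I split $\beta C f_{\delta}(t)=\sum_{i}\frac{\mu_i}{p_i}B_i\frac{2-\gamma_i}{\delta-2}f_{\delta}(t)+\big(\beta C-\sum_{i}\frac{\mu_i}{p_i}B_i\frac{2-\gamma_i}{\delta-2}\big)f_{\delta}(t)$ and rewrite
\[
\varphi(1)-\varphi(t)=\sum_{i=1}^{2}\frac{\mu_i}{p_i}B_i\Big(f_{\gamma_i}(t)+\frac{2-\gamma_i}{\delta-2}f_{\delta}(t)\Big)+\frac{1}{\delta-2}\Big(\beta C(\delta-2)-\sum_{i=1}^{2}\frac{\mu_i}{p_i}(2-\gamma_i)B_i\Big)f_{\delta}(t).
\]
Each summand in the first sum is $\ge 0$ by $(\ast\ast)$ and $B_i\ge 0$; the scalar multiplying the last $f_{\delta}(t)$ is positive by $(\ast)$; and $f_{\delta}(t)\ge 0$ for all $t>0$, since $\delta>2$ gives $f_{\delta}'(t)=\delta t(t^{\delta-2}-1)$, so $f_{\delta}$ attains its minimum $f_{\delta}(1)=0$ at $t=1$. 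Hence $\varphi(1)-\varphi(t)\ge 0$ for every $t>0$, which is \eqref{keyp}.

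I expect the only genuine difficulty to be recognizing that $Q(v_1,v_2)=0$ and $J(v_1,v_2)>0$ must be merged into the single estimate $(\ast)$, together with isolating $(\ast\ast)$; note that this route uses only $(H_0)$ (the extra conditions on the $p_i$ do not seem to enter, and are presumably needed only for the later path construction). A less efficient alternative would be to study the zeros of $\varphi'$ directly: writing $\varphi'(t)=t\,G(t)$, the coefficients of $G$ in increasing order of exponent have sign pattern $(-,-,+,-)$, so a Descartes-type rule of signs bounds the number of positive zeros of $\varphi'$ by two; combined with $\varphi<0$ near $0$, $\varphi(1)>0$ and $\varphi(t)\to-\infty$ as $t\to\infty$, this forces $\varphi'$ to have exactly two simple zeros, the larger of which must be $t=1$ and is the global maximum of $\varphi$.
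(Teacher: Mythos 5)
Your argument is correct, and it is genuinely different from (and sharper than) the paper's proof. The paper first shows $\int_{\R^N}|v_1|^{r_1}|v_2|^{r_2}\,dx>0$, then uses Lemma \ref{negative} to produce a negative local minimum of $h(t)=J(v_1^t,v_2^t)$ at some $t_0\in(0,1)$, notes $h(t)\to-\infty$, and finally invokes Lemma \ref{lem124} — the counting result, proved in the Appendix by a delicate sign analysis of $g''$, and the only place where the extra hypotheses $p_i\leq r_1+r_2-\frac{2}{N}$ or $|p_1-p_2|\leq\frac{2}{N}$ enter — to conclude that the stationary point $t=1$ must be the unique maximum. You instead merge $Q(v_1,v_2)=0$ and $J(v_1,v_2)>0$ into the single strict inequality $(\ast)$, write $\varphi(1)-\varphi(t)$ as a nonnegative combination of the functions $f_\theta$, and settle everything with the three-chord (convexity in the exponent) inequality $(\ast\ast)$ together with $f_\delta\geq 0$; all steps check out (in particular $\partial_\eta^2 f_\eta(t)=t^\eta(\ln t)^2\geq0$ and $f_\delta'(t)=\delta t(t^{\delta-2}-1)$ are correct), and the conclusion is the global bound $\varphi(t)\leq\varphi(1)$ rather than a count of critical points. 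What your route buys is significant: it uses only $0<\big(\frac{p_i}{2}-1\big)N<2<\big(\frac{r_1+r_2}{2}-1\big)N$, i.e. $(H_0)$ alone, so it makes Lemma \ref{lem124} (and hence the extra restrictions on $p_1,p_2$) unnecessary for \eqref{keyp}; in view of Remark \ref{extension}, this is exactly the property whose validity under $(H_0)$ alone the authors leave open, so your proof would remove that hypothesis from Theorem \ref{thm1}(ii) in the case $N\geq5$. Two cosmetic caveats: the strict inequality in $(\ast)$ already forces $\int_{\R^N}|v_1|^{r_1}|v_2|^{r_2}\,dx>0$, so you need not treat that separately, and in your closing aside the claim ``$\varphi<0$ near $0$'' requires $B_1+B_2>0$ (the degenerate case $B_1=B_2=0$ is trivial anyway); neither affects your main argument.
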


The proof of Lemma \ref{lem123} relies on the following technical result whose proof will be postponed until the Appendix.

\begin{lem}\label{lem124}
Assume that $(H_0)$ holds and that either $p_i \leq r_1 + r_2 - \frac{2}{N}$, $i=1,2$ or $|p_1-p_2| \leq \frac{2}{N}$. { Let $(v_1, v_2) \in H^1(\R^N) \times H^1(\R^N)$ be arbitrary. Then the function $t \mapsto J(v_1^t, v_2^t)$ } admits at most {two stationary points} for $t>0$.
\end{lem}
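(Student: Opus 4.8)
The plan is to reduce the statement to a one–variable calculus problem. Writing, with the scaling \eqref{underscale},
\[
f(t) := J(v_1^t, v_2^t) = \tfrac{A}{2}\,t^2 - B_1\, t^{\sigma_1} - B_2\, t^{\sigma_2} - \beta D\, t^{\tau},
\]
where $A := \int_{\R^N} |\nabla v_1|^2 + |\nabla v_2|^2\,dx \geq 0$, $B_i := \frac{\mu_i}{p_i}\int_{\R^N}|v_i|^{p_i}\,dx \geq 0$, $D := \int_{\R^N}|v_1|^{r_1}|v_2|^{r_2}\,dx \geq 0$, $\sigma_i := \frac{N(p_i-2)}{2}$ and $\tau := \frac{N(r_1+r_2-2)}{2}$, so that $0 < \sigma_1,\sigma_2 < 2 < \tau$ under $(H_0)$, the stationary points of $f$ on $(0,\infty)$ are exactly the zeros on $(0,\infty)$ of
\[
g(t) := \frac{f'(t)}{t} = A - \sigma_1 B_1\, t^{\sigma_1-2} - \sigma_2 B_2\, t^{\sigma_2-2} - \tau\beta D\, t^{\tau-2},
\]
so it suffices to bound the number of zeros of $g$. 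I would first dispose of the degenerate configurations: one may assume $(v_1,v_2)\neq(0,0)$, hence $A>0$; if $D=0$ then $g' = \sigma_1(2-\sigma_1)B_1 t^{\sigma_1-3} + \sigma_2(2-\sigma_2)B_2 t^{\sigma_2-3} > 0$, so $g$ is strictly increasing and $f$ has at most one stationary point; and if $D>0$ then necessarily $B_1,B_2>0$ as well.

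The heart of the matter is to show that in the remaining case ($A,B_1,B_2,D>0$) the function $g$ is unimodal, i.e.\ strictly increasing on some $(0,t_0)$ and strictly decreasing on $(t_0,\infty)$. Differentiating,
\[
g'(t) = \sigma_1(2-\sigma_1)B_1\, t^{\sigma_1-3} + \sigma_2(2-\sigma_2)B_2\, t^{\sigma_2-3} - \tau(\tau-2)\beta D\, t^{\tau-3} =: P_1 t^{\sigma_1-3} + P_2 t^{\sigma_2-3} - P_3 t^{\tau-3},
\]
with $P_1,P_2,P_3>0$ precisely because $0<\sigma_i<2<\tau$. Setting $m := \min\{\sigma_1,\sigma_2\}$ and factoring out $t^{m-3}>0$, one is left with $P_1 t^{\sigma_1-m} + P_2 t^{\sigma_2-m} - P_3 t^{\tau-m}$: its two lowest (possibly coinciding) exponents carry positive coefficients while the strictly largest exponent $\tau-m$ carries a negative coefficient, it equals a positive constant at $t=0^+$ and tends to $-\infty$ as $t\to\infty$, and a further differentiation shows it is itself increasing-then-decreasing (or just decreasing), hence it vanishes exactly once. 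Consequently $g'$ changes sign exactly once, from $+$ to $-$; since moreover $g(0^+)=-\infty$ and $g(+\infty)=-\infty$, Rolle's theorem forbids $g$ from having three or more zeros on $(0,\infty)$. Thus $f$ has at most two stationary points for $t>0$.

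The main work in writing this up is purely the bookkeeping: separating the cases where some of $A,B_1,B_2,D$ vanish and treating the coincidence $\sigma_1=\sigma_2$ (i.e.\ $p_1=p_2$) alongside the strict case; none of this is a genuine obstacle. I should remark that the monotonicity argument above uses only $0<\sigma_1,\sigma_2<2<\tau$ and seems not to require the extra hypotheses $p_i\le r_1+r_2-\frac2N$ ($i=1,2$), i.e.\ $\sigma_i\le\tau-1$, or $|p_1-p_2|\le\frac2N$, i.e.\ $|\sigma_1-\sigma_2|\le1$; those are in any event needed in Lemma \ref{lem123}, and they would shorten a more elementary, hands-on estimate. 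An even quicker route would be to invoke the generalized Descartes rule of signs (Pólya's theorem for exponential sums) applied to $f'(t)=At-\sigma_1 B_1 t^{\sigma_1-1}-\sigma_2 B_2 t^{\sigma_2-1}-\tau\beta D t^{\tau-1}$: ordering the exponents $\sigma_1-1,\sigma_2-1$ (both $<1$), then $1$, then $\tau-1$ produces the sign pattern $(-,-,+,-)$, which has exactly two sign changes, hence at most two positive roots.
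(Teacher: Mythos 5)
Your proof is correct, and while it belongs to the same family as the paper's argument (divide $\theta'(t)=\frac{d}{dt}J(v_1^t,v_2^t)$ by a power of $t$ and control the monotonicity of the quotient, then invoke Rolle), the execution differs at the decisive point. The paper studies $g(t)=\theta'(t)/t^{\alpha}$ for a case-dependent exponent $\alpha$ and needs the \emph{second} derivative $g''$ to have a favourable sign structure; it is precisely in choosing $\alpha$ so that the coefficients $\alpha_i$ have the right signs that the hypotheses $p_i\le r_1+r_2-\frac{2}{N}$ or $|p_1-p_2|\le\frac{2}{N}$ enter. Your normalization $\alpha=1$ turns the kinetic term into a constant, so a single differentiation leaves $g'(t)=P_1t^{\sigma_1-3}+P_2t^{\sigma_2-3}-P_3t^{\tau-3}$ whose sign pattern $(+,+,-)$ is automatic from $0<\sigma_1,\sigma_2<2<\tau$, i.e.\ from $(H_0)$ alone; the one-sign-change step (your hands-on differentiation, or generalized Descartes) then gives unimodality of $g$ and hence at most two zeros, uniformly, without the paper's case distinction between $p_1\le p_2\le r_1+r_2-\frac{2}{N}$ and $|p_1-p_2|\le\frac{2}{N}$. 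I checked the details, including the subcase $\sigma_1=\sigma_2$ and the degenerate cases $D=0$ or some $B_i=0$: they go through as you indicate. The only caveat is the trivial one you implicitly dismiss: for $(v_1,v_2)=(0,0)$ the fibre map is identically zero and every $t$ is stationary, so the statement as written fails there; the paper's proof has the same tacit exclusion, and it is harmless since in the application (Lemma \ref{lem123}) one has $\int_{\R^N}|v_1|^{r_1}|v_2|^{r_2}\,dx>0$. What your route buys is genuinely significant: it shows Lemma \ref{lem124}, and hence the identity $J(u_1,u_2)=\max_{t>0}J(u_1^t,u_2^t)$ of Lemma \ref{lem123}, holds under $(H_0)$ alone, which would remove the extra restrictions in Theorem \ref{thm1}(ii) for $N\ge5$ and thereby address the conjecture formulated in Remark \ref{extension}.
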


\begin{remark}\label{extension}
It is only in the proof of Lemma \ref{lem124} that we need the assumption $p_i \leq r_1 + r_2 - \frac{2}{N},$ $i =1,2$ or alternatively $|p_2-p_1| \leq \frac{2}{N}$. These conditions are used to establish the key property on which our proof of Theorem \ref{thm1}(ii) relies, {namely that the couple $(u_1, u_2) \in H^1(\R^N)\times H^1(\R^N)$ obtained in Lemma \ref{lem122}  satisfies $J(u_1,u_2) = \max_{t >0}J(u_1^t,u_2^t)$. }
\end{remark}

\begin{proof}[Proof of Lemma \ref{lem123}]
{{
First observe that since $Q(v_1,v_2) = 0$ and $J(v_1,v_2) >0$, necessarily
\begin{equation}\label{L1}
\int_{\R^N} |v_1|^{r_1}|v_2|^{r_2} \, dx >0
\end{equation}
and thus in particular $v_1 \neq 0$ and $v_2 \neq 0$. To check \eqref{L1}, assume by contradiction that the integral is zero. Then $J(v_1, v_2 ) >0$ gives
\begin{equation}\label{L2}
\frac{1}{2}\int_{\R^N} |\nabla v_1|^2 + |\nabla v_2|^2 \, dx > \sum_{i =1}^2 \frac{\mu_i}{p_i} \int_{\R^N}|v_i|^{p_i} \, dx
\end{equation}
and using that $Q(u_1,u_2) =0$ namely that
\begin{equation*}
\int_{\R^N} |\nabla v_1|^2 + |\nabla v_2|^2 \, dx = \sum_{i =1}^2 \frac{\mu_i}{p_i} \left(\frac {p_i}{2} -1 \right)N \int_{\R^N}|v_i|^{p_i} \, dx
\end{equation*}
we get, since $p_i < 2 + \frac{4}{N}, i=1,2$ a contradiction with \eqref{L2}}}.

Next we deduce from Lemma \ref{negative} that $h(t): = J(u_1^t,u_2^t)$ must have a local minimizer for a $t_0 \in ]0,1[$ with $h(t_0)<0$. Also since \eqref{L1} holds and $r_1+r_2> 2+ \frac {4}{N}$, then $h(t) \to -\infty$ as $t \to \infty$. At this point we deduce, using Lemma \ref{lem124}, that necessarily $h$ admits a unique second stationary point. Since $Q(v_1,v_2)=0$ it follows by identification that  $\max_{t>0}h(t)=h(1)$ and \eqref{keyp} holds.
\end{proof}

\begin{proof}[End of the proof of Theorem \ref{thm1}$(ii)$.] We now deal with the case $N \geq 5$. In view of Lemma \ref{lem122}, it remains to prove that $(u_1,u_2) \in S(a_1,a_2)$. Let $\bar{a}_1:= \|u_1\|_2^2 \leq  a_1$ and $\bar{a}_2:= \|u_2\|_2^2 \leq  a_2$. Assuming by contradiction that either $\bar{a}_1 < a_1$ or $\bar{a}_2 < a_2$ we shall obtain a contradiction by constructing a path $g \in \Gamma$ such that
$$\max_{t \in [0,1]}J(g(t)) < \gamma(a_1,a_2).$$
Let $0 < t_1 < 1 < t_2$ be such that $(u_1^{t_1},u_2^{t_1}) \in {\mathcal{B}(\bar{\rho}/2)}$ and $J(u_1^{t_2}, u_2^{t_2}) < m(a_1,a_2) <0.$ The existence of $0< t_1 < 1$ is insured by Lemma \ref{negative} and the one of $t_2 >1$ by the property that $J(u_1^t, u_2^t) \to - \infty$ as $t \to \infty$. Now because of \eqref{i4}, if $\bar{a}_1 < a_1$ there exists a $w_1 \in S(a_1 - \bar{a}_1)$ such that $w_1^{t_1} \in \mathcal{B}(\bar{\rho}/2)$, and $J(w_1^t,0) <0$ for $t \in [t_1, t_2]$. Here $w^t(x):= t^{\frac{N}{2}} w(tx)$ and without restriction we can assume that $w_1 \in S(a_1 - \bar{a}_1)$ is radially symmetric.   Similarly if $\bar{a}_2 < a_2$ we can choose a radially symmetric $w_2 \in S(a_2 - \bar{a}_2)$ such that $w_2^{t_1} \in \mathcal{B}(\bar{\rho}/2)$, and $J(0, w_2^{t}) < 0$ for $t \in [t_1, t_2]$. Note that we just take $w_1=0$ if $\bar{a}_1 = a_1$, and $w_2=0$ if  $\bar{a}_2 = a_2$.

We now set
$$
v_i := \{u_i, w_i\}^*, \ \ \mbox{for} \ \ i=1,2,
$$
where $\{u, v\}^*$ is the rearrangement of $u, v$ defined by \eqref{rearr}.
Then we consider the path  $ [t_1, t_2] \mapsto (v_1^t, v_2^t)$.
From Lemma \ref{Ikoma} $(iii)$-$(iv)$, for all $t \in [t_1, t_2]$, we see that $(v_1^t, v_2^t) \in S(a_1, a_2)$, and
\begin{align*}
{||\nabla v_1^t||_2^2 + ||\nabla v_2^t||_2^2 =} & \, {t^2 \left(||\nabla v_1||_2^2 + ||\nabla v_2||_2^2\right)} \leq t^2 \sum_{i=1}^2 {\left(||\nabla u_i||_2^2 + ||\nabla w_i||_2^2\right)} \\
&  = \sum_{i=1}^2 {\left( ||\nabla u_i^t||_2^2 + ||\nabla w_i^t||_2^2\right)}.
\end{align*}
Thus $(v_1^{t_1},v_2^{t_2}) \in \mathcal{B}(\bar{\rho})$, due to $(u_1^{t_1},u_2^{t_1}), (w_1^{t_1},w_2^{t_1}) \in \mathcal{B}(\bar{\rho}/2)$. Also
\begin{align*}
J(v_1^t, v_2^t) = & \frac{t^2}{2} \int_{\R^N} |\nabla v_1|^2 + |\nabla v_2|^2 dx - \sum_{i=1}^2 \frac{\mu_i}{p_i} t^{(\frac{p_i}{2}-1)N} \int_{\R^N} |v_i|^{p_i} dx\\
              & - \beta t^{(\frac{r_1+r_2}{2}-1)N} \int_{\R^N} |v_1|^{r_1}|v_2|^{r_2}\, dx\\
              & \leq  \frac{t^2}{2} \int_{\R^N} |\nabla u_1|^2 + |\nabla u_2|^2 dx + \frac{t^2}{2} \int_{\R^N} |\nabla w_1|^2 + |\nabla w_2|^2\, dx \\
              & - \sum_{i=1}^2 \frac{\mu_i}{p_i} t^{(\frac{p_i}{2}-1)N} \int_{\R^N} |u_i|^{p_i}\, dx - \sum_{i=1}^2 \frac{\mu_i}{p_i} t^{(\frac{p_i}{2}-1)N} \int_{\R^N} |w_i|^{p_i}\, dx \\
              & - \beta t^{(\frac{r_1+r_2}{2}-1)N} \int_{\R^N} |u_1|^{r_1}|u_2|^{r_2} dx,
\end{align*}
{where we have used Lemma \ref{Ikoma1}}.
As a consequence, for $ t \in [t_1, t_2]$,
\begin{align} \label{decrease}
J(v_1^t,v_2^t) \leq J(u_1^t, u_2^t) +  J(w_1^t, 0) + J(0, w_2^t).
\end{align}
{Since  $ J(w_1^{t_2}, 0) \leq 0 $  and $J(0, w_2^{t_2})\leq 0$, we get from \eqref{decrease} that  $J(v_1^{t_2},v_2^{t_2}) \leq J(u_1^{t_2},u_2^{t_2}) < m(a_1,a_2) $. In particular, from the definition, see \eqref{i3}, of $m(a_1,a_2)$ this shows that $(v_1^{t_2},v_2^{t_2}) \notin \overline{\mathcal{B}(\rho_0)}$}.  {Now still using \eqref{decrease} and since, see Lemma \ref{lem122}, $J(u_1,u_2) = \gamma(a_1,a_2)$ we deduce from Lemma  \ref{lem123} that }
\begin{align*}
\max_{t \in [t_1,t_2]} J(v_1^t,v_2^t)  \leq &  \max_{t \in [t_1,t_2]} \Big[J(u_1^t, u_2^t) + J(w_1^t,0) + J(0, w_2^t)\Big]\\
              & \leq \max_{t \in [t_1,t_2]} J(u_1^t, u_2^t) + \max_{t \in [t_1,t_2]} J(w_1^t, 0)+ \max_{t \in [t_1,t_2]} J(0, w_2^t) \\
              & \leq  J(u_1,u_2) + \max_{t \in [t_1,t_2]} J(w_1^t, 0)+ \max_{t \in [t_1,t_2]} J(0, w_2^t) < \gamma(a_1,a_2),
\end{align*}
because $\max_{t \in [t_1,t_2]} J(w_1^t, 0) <0$ if $w_1 \neq 0$ and $\max_{t \in [t_1,t_2]} J(0, w_2^t) <0$ if $w_2 \neq 0$. Thus, after a renormalization $[t_1,t_2] \to [0,1]$ we obtain a path $g$ lying in $\Gamma$ such that
$\max_{t \in [0,1]}J(g(t)) < \gamma(a_1,a_2)$ and this ends the proof.
\end{proof}

We now turn to the existence of the second solution of Theorem \ref{thm2}. Our proof { borrows} several key ingredients from \cite{BJS}. First we recall some properties of the scalar nonlinear Schr\"odinger equation. Let $w_{a, \mu, p} >0, w_{a, \mu, p} \in S(a)$ { satisfy}
\begin{align}\label{eqw}
-\Delta w_{a, \mu, p} - \lambda w_{a, \mu, p} = \mu |w_{a, \mu, p}|^{p-2}w_{a, \mu, p},
\end{align}
for $2 + \frac 4N < p < 2^*$ and $\lambda <0$. It is well known that $w_{a, \mu, p}$ is unique and given by
\begin{align}\label{solw}
w_{a, \mu, p}(x) = \left(- \frac{\lambda}{\mu}\right)^{\frac{1}{p-2}}w_0((-\lambda)^{\frac 12}x),
\end{align}
where $w_0$ is the unique positive radial solution of the equation
$
-\Delta w + w = |w|^{p-2}w.$
In what follows, we set
\begin{align} \label{defw}
C_0(N, p)= \int_{\R^N} |\nabla w_0|^2 \,dx , \quad \mbox{and} \quad C_1(N, p)= \int_{\R^N} |w_0|^p \,\,dx.
\end{align}
Let us now introduce the Pohozaev type manifold
\begin{align*}
\mathcal{P}(N, a, \mu, p):= \{u \in S(a): \int_{\R^N}|\nabla u|^2 \,dx = \frac{\mu}{p}\left(\frac{p}{2} - 1\right)N \int_{\R^N}|u|^p \,dx\}
\end{align*}
and the functional $I_{\mu, p} : H^1(\R^N) \to \R$ given by
$$I_{\mu, p}(u) = \frac{1}{2}\int_{\R^N} |\nabla u|^2 - \frac{\mu}{p} \int_{\R^N} |u|^p \, dx.$$

\begin{lem}
The solution $w_{a, \mu, p}$ of \eqref{eqw} belongs to  $\mathcal{P}(N, a, \mu, p)$,
and it minimizes the functional $I_{\mu, p}$ on the manifold $\mathcal{P}(N, a, \mu, p)$.
\end{lem}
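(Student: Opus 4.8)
The plan is to prove the two claims separately, using the sharp Gagliardo--Nirenberg inequality and the explicit formula \eqref{solw}; no compactness is needed and the argument is valid for every $N\geq1$. \emph{Step 1: $w_{a,\mu,p}\in\mathcal{P}(N,a,\mu,p)$.} Writing $w:=w_{a,\mu,p}$ and observing that \eqref{eqw} says exactly $I_{\mu,p}'(w)=\lambda w$, we see that $w$ is a critical point of $I_{\mu,p}$ restricted to $S(a)$, with $\lambda$ the Lagrange multiplier. Hence the derivative at $t=1$ of the dilated path $t\mapsto I_{\mu,p}(w^t)$, where $w^t(x):=t^{N/2}w(tx)\in S(a)$, vanishes. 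Since
\[
I_{\mu,p}(w^t)=\frac{t^2}{2}\int_{\R^N}|\nabla w|^2\,dx-\frac{\mu}{p}\,t^{(\frac{p}{2}-1)N}\int_{\R^N}|w|^p\,dx,
\]
differentiating at $t=1$ produces precisely $\int_{\R^N}|\nabla w|^2\,dx=\frac{\mu}{p}\big(\frac{p}{2}-1\big)N\int_{\R^N}|w|^p\,dx$, i.e. $w\in\mathcal{P}(N,a,\mu,p)$. (Equivalently, combine the Nehari identity obtained by testing \eqref{eqw} with $w$ and the Pohozaev identity for \eqref{eqw}, eliminating the term $\lambda\|w\|_2^2$.)

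\emph{Step 2: $w_{a,\mu,p}$ minimizes $I_{\mu,p}$ on $\mathcal{P}(N,a,\mu,p)$.} On $\mathcal{P}(N,a,\mu,p)$ one has $\|u\|_p^p=\frac{2p}{\mu N(p-2)}\|\nabla u\|_2^2$, hence
\[
I_{\mu,p}(u)=\Big(\frac{1}{2}-\frac{2}{N(p-2)}\Big)\int_{\R^N}|\nabla u|^2\,dx\qquad\text{for every }u\in\mathcal{P}(N,a,\mu,p),
\]
and the factor $\frac{1}{2}-\frac{2}{N(p-2)}$ is strictly positive because $p>2+\frac4N$. So it is enough to show $\|\nabla w_{a,\mu,p}\|_2^2\leq\|\nabla u\|_2^2$ for all $u\in\mathcal{P}(N,a,\mu,p)$. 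For such $u$, inserting the sharp form of \eqref{ga1} (whose extremals are exactly the functions $c\,w_0(\tau\,\cdot\,)$, $c\in\R$, $\tau>0$, up to translation) into the constraint $\|\nabla u\|_2^2=\frac{\mu}{p}\big(\frac{p}{2}-1\big)N\|u\|_p^p$ and using $\alpha(p)p=\frac{N(p-2)}{2}>2$ yields an explicit lower bound $\|\nabla u\|_2^2\geq D=D(N,a,\mu,p)>0$, with equality if and only if $u$ is such an extremal belonging to $S(a)\cap\mathcal{P}(N,a,\mu,p)$. By \eqref{solw}, $w_{a,\mu,p}$ is of the form $c\,w_0(\tau\,\cdot\,)$ and lies in $S(a)\cap\mathcal{P}(N,a,\mu,p)$, so equality holds for it, i.e. $\|\nabla w_{a,\mu,p}\|_2^2=D$. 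This gives the desired inequality, and in fact shows $w_{a,\mu,p}$ is the unique minimizer.

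I expect the only genuinely delicate point to be the equality analysis in Step 2: one must invoke that the extremals of \eqref{ga1} are precisely the dilations of $w_0$, and verify that among them exactly one, namely $w_{a,\mu,p}$, sits on $S(a)\cap\mathcal{P}(N,a,\mu,p)$. If one wants to avoid appealing to the sharp constant, an alternative route is to note that $\inf_{\mathcal{P}(N,a,\mu,p)}I_{\mu,p}=\inf_{u\in S(a)}\max_{t>0}I_{\mu,p}(u^t)$ --- since $p>2+\frac4N$, the map $t\mapsto I_{\mu,p}(u^t)$ has a unique critical point, a strict global maximum --- then prove this level is attained (after a Schwarz symmetrization, which does not increase $I_{\mu,p}(u^t)$ for any $t>0$), and finally identify any minimizer with $w_{a,\mu,p}$ via the fact that $\mathcal{P}(N,a,\mu,p)$ is a natural constraint together with the uniqueness of the positive solution of \eqref{eqw}; there the main obstacle is the compactness of the minimizing sequence, immediate in $H^1_{rad}(\R^N)$ for $N\geq2$ but requiring a concentration--compactness argument when $N=1$.
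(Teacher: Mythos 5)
Your argument is correct, but it is genuinely different from what the paper does: the paper gives no self-contained proof at all and simply invokes \cite[Lemmas 2.7 and 2.10]{Jeanjean}, i.e.\ the mountain-pass/natural-constraint machinery for the scalar problem, where the infimum of $I_{\mu,p}$ on $\mathcal{P}(N,a,\mu,p)$ is identified with the mountain-pass level $\inf_{u\in S(a)}\max_{t>0}I_{\mu,p}(u^t)$ and then attained by the (unique up to translation) positive normalized solution $w_{a,\mu,p}$ --- essentially the alternative route you sketch in your last paragraph. Your main proof instead reduces the minimization on $\mathcal{P}$ to minimizing $\|\nabla u\|_2^2$ (via the identity $I_{\mu,p}(u)=\big(\tfrac12-\tfrac{2}{N(p-2)}\big)\|\nabla u\|_2^2$ on $\mathcal{P}$, valid since $p>2+\tfrac4N$) and then gets the sharp lower bound from the optimal Gagliardo--Nirenberg inequality, using that its extremals are exactly the rescaled translates of $w_0$ (Weinstein's theorem) and that $w_{a,\mu,p}$ is one of them by \eqref{solw}. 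For Step 1 your dilation-path argument is fine (and the parenthetical Nehari-plus-Pohozaev derivation is the cleaner rigorous version, avoiding any discussion of differentiability of $t\mapsto w^t$). What your route buys: it is short, self-contained, avoids any min-max or compactness discussion (so no case distinction $N=1$ versus $N\geq2$), reproduces the explicit quantities \eqref{normw}--\eqref{level}, and even gives uniqueness of the minimizer up to translation; its cost is that it needs the \emph{sharp} constant and the characterization of extremals in \eqref{ga1}, which the paper's statement of \eqref{ga1} does not provide, so you must explicitly cite Weinstein's result --- you do acknowledge this, so there is no gap, only an extra external input that the cited approach of \cite{Jeanjean} does not require.
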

\begin{proof}
The proof of such results can be directly deduced from  \cite[Lemmas 2.7 and 2.10]{Jeanjean}.
\end{proof}
From \eqref{solw}-\eqref{defw}, {and by following the arguments of \cite[Proposition 2.2]{BJS} } it is not difficult to check that
\begin{align} \label{normw}
\begin{split}
&\|\nabla w_{a, \mu, p}\|_2^2 = \left(\frac {a}{C_0(N, p)}\right)^{\frac{2p-N(p-2)}{4 - N(p-2)}} \mu^{\frac{4}{4-N(p-2)}} C_0(N, p),\\
& \|w_{a, \mu, p}\|_p^p = \left(\frac {a}{C_0(N, p)}\right)^{\frac{2p-N(p-2)}{4 - N(p-2)}} \mu^{\frac{N(p-2)}{4-N(p-2)}}C_1(N, p),
\end{split}
\end{align}
and then the least energy level of $I_{\mu, p}$ on $\mathcal{P}(N, a, \mu, p)$ is given by
\begin{align}\label{level}
l(N, a, \mu, p)&:= \inf_{u \in \mathcal{P}(N, a, \mu, p)}I_{\mu, p}(u) = I_{\mu, p}(w_{a, \mu, p}) = \frac{\mu}{p}\left(\left(\frac p2 -1\right)\frac N2 -1\right) \int_{\R^N} |w_{a, \mu, p}|^p \,dx  \nonumber \\
            & = \frac 1p \left(\left(\frac p2 -1\right)\frac N2 -1\right)\left(\frac {a}{C_0(N, p)}\right)^{\frac{2p-N(p-2)}{4 - N(p-2)}} \mu^{\frac{4}{4-N(p-2)}}C_1(N, p).
\end{align}
We now define, for $s \in \R$ and $w \in H^1(\R^N)$, the {dilation} $(s * w)(x) := e^{\frac{Ns}{2}}w(e^s x)$.

\begin{lem} \label{pro-ground}
For any $w \in H^1(\R^N)$, there holds
\begin{align} \label{psi1}
&I_{\mu, p}(s*w) = \frac{e^{2s}}{2} \int_{\R^N} |\nabla w|^2 \,dx
                    - \frac{\mu}{p} e^{s(\frac p2 - 1)N}\int_{\R^N} |w|^p \, dx, \\ \nonumber
& \frac{\partial}{\partial s} I_{\mu, p}(s*w)= e^{2s}\int_{\R^N}|\nabla w|^2 \,dx
                    -\frac{\mu}{p} \left(\frac p2 -1\right)N e^{s(\frac p2 - 1)N} \int_{\R^N} |w|^p \, dx.
\end{align}
In particular, if $ w = w_{a, \mu, p}$, then
\begin{align}\label{psi}
\begin{split}
&\frac{\partial}{\partial s} I_{\mu, p}(s*w_{a, \mu, p}) = 0 \,\,\text{if}\,\, s=0,\\
&\frac{\partial}{\partial s} I_{\mu, p}(s*w_{a, \mu, p}) > 0(< 0)  \,\,\text{if}\,\, s < 0(>0).
\end{split}
\end{align}
\end{lem}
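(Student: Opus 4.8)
The plan is to establish the two displayed identities by a direct change of variables in the integrals defining $I_{\mu,p}$, then to differentiate the resulting closed-form expression in $s$, and finally to read off the behaviour at $w_{a,\mu,p}$ from the fact, recorded in the preceding lemma, that $w_{a,\mu,p}\in\mathcal{P}(N,a,\mu,p)$.

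First I would compute each term of $I_{\mu,p}(s*w)$ using $(s*w)(x)=e^{\frac{Ns}{2}}w(e^sx)$. For the kinetic term one has $\nabla(s*w)(x)=e^{\frac{Ns}{2}}e^{s}(\nabla w)(e^sx)$, hence $|\nabla(s*w)(x)|^2=e^{Ns}e^{2s}|\nabla w(e^sx)|^2$; the substitution $y=e^sx$, whose Jacobian contributes $e^{-Ns}$, gives $\int_{\R^N}|\nabla(s*w)|^2\,dx=e^{2s}\int_{\R^N}|\nabla w|^2\,dx$. For the nonlinear term, $|(s*w)(x)|^p=e^{\frac{Nsp}{2}}|w(e^sx)|^p$, and the same substitution yields $\int_{\R^N}|s*w|^p\,dx=e^{Ns(\frac p2-1)}\int_{\R^N}|w|^p\,dx$. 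Substituting these two identities into the definition of $I_{\mu,p}$ gives the first formula; since the $s$-dependence is carried only by the two explicit exponentials $e^{2s}$ and $e^{s(\frac p2-1)N}$, differentiating in $s$ immediately produces the second formula.

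Next I would specialise to $w=w_{a,\mu,p}$. By the previous lemma this function lies on the Pohozaev manifold, that is $\int_{\R^N}|\nabla w_{a,\mu,p}|^2\,dx=\frac{\mu}{p}\big(\frac p2-1\big)N\int_{\R^N}|w_{a,\mu,p}|^p\,dx$; evaluating the derivative formula at $s=0$ and invoking this identity gives $\frac{\partial}{\partial s}I_{\mu,p}(s*w_{a,\mu,p})|_{s=0}=0$. Setting $A:=\int_{\R^N}|\nabla w_{a,\mu,p}|^2\,dx>0$ and $c:=\big(\frac p2-1\big)N$, the Pohozaev identity lets me rewrite the derivative for all $s$ as $\frac{\partial}{\partial s}I_{\mu,p}(s*w_{a,\mu,p})=A\big(e^{2s}-e^{cs}\big)$. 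The hypothesis $2+\frac4N<p<2^*$ is equivalent to $c>2$, so $cs>2s$ when $s>0$ and $cs<2s$ when $s<0$; hence $e^{2s}-e^{cs}<0$ for $s>0$ and $e^{2s}-e^{cs}>0$ for $s<0$, which is exactly the claimed sign information.

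There is no real obstacle in this lemma — it is a routine scaling computation. The only points that require a little care are the bookkeeping of the powers of $e^s$ under the change of variables and the correct use of the strict inequality $p>2+\frac4N$ (equivalently $(\frac p2-1)N>2$) to compare the two exponentials and conclude the monotonicity.
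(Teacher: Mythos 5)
Your computation is correct and is exactly the standard scaling argument: the paper itself gives no details here and simply refers to \cite[Lemma 3.1]{BJS}, which proceeds in the same way. Your use of the preceding lemma (that $w_{a,\mu,p}\in\mathcal{P}(N,a,\mu,p)$) to rewrite the derivative as $A\bigl(e^{2s}-e^{\left(\frac p2-1\right)Ns}\bigr)$ with $A>0$, together with $\left(\frac p2-1\right)N>2$, correctly yields \eqref{psi}, so nothing is missing.
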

\begin{proof}
We refer to \cite[Lemma 3.1]{BJS} for a very similar proof.
\end{proof}
Now define, for $i=1, 2,$
\begin{align} \label{defc}
\begin{split}
c_i:=c_i(r_1 + r_2, p_i)&:=\frac{p_i-(r_1+r_2)}{p_i}\left(\frac{p_i(r_1+r_2)}{p_i-2}\right)^{\frac{r_1+r_2-2}{p_i-(r_1+r_2)}}\\
                        &=\max_{t \geq 0} \Big[t^{r_1+r_2-2}-\frac {1}{p_i}t^{p_i-2}\Big].
\end{split}
\end{align}
In view of \eqref{level}, {$l(N,a, \mu, p)$ is strictly decreasing as a function of $\mu$ when $p > 2 +\frac 4N$. Thus it is possible to choose a  $\beta_1 = \beta_1(a_1,a_2) >0$  such} 
\begin{align} \label{defb0}
\begin{split}
&{l(N, a_1, \mu_1 + \beta , p_1)+ l(N, a_2, \mu_2 + \beta, p_2) - \beta c_1a_1-\beta c_2a_2} \\
&{> \max\{l(N, a_1, \mu_1, p_1), l(N, a_2, \mu_2, p_2)\}}
\end{split}
\end{align}
for any $0 < \beta < \beta_1$. Note that we can choose $\beta_1 \to \infty$ as $a_1,a_2 \to 0$. Also choosing if necessary $\beta_0 >0$ smaller in Lemma \ref{mpgeo1} we can assume that $\beta_1 = \beta_0$.

\begin{lem}\label{nonzero}
For any $0 < \beta < \beta_0$,
\begin{align*}
&\inf \{J(u_1, u_2): (u_1, u_2) \in \mathcal {P}(N, a_1, \mu_1 + \beta, p_1) \times \mathcal {P}(N, a_2, \mu_2 + \beta, p_2)\}\\
&> \max\{l(N, a_1, \mu_1, p_1), l(N, a_2, \mu_2, p_2)\}.
\end{align*}
\end{lem}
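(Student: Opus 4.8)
The strategy is to bound $J$ from below on the product of the two Pohozaev manifolds by the quantity appearing on the left-hand side of \eqref{defb0}, and then to invoke \eqref{defb0} itself. The starting point is the elementary identity, valid for any $(u_1,u_2)\in H^1(\R^N)\times H^1(\R^N)$,
\[
J(u_1,u_2)=I_{\mu_1+\beta,p_1}(u_1)+I_{\mu_2+\beta,p_2}(u_2)+\beta\Big(\tfrac{1}{p_1}\|u_1\|_{p_1}^{p_1}+\tfrac{1}{p_2}\|u_2\|_{p_2}^{p_2}-\int_{\R^N}|u_1|^{r_1}|u_2|^{r_2}\,dx\Big),
\]
obtained by adding and subtracting $\tfrac{\beta}{p_i}\|u_i\|_{p_i}^{p_i}$. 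If in addition $u_i\in\mathcal{P}(N,a_i,\mu_i+\beta,p_i)$ for $i=1,2$, then the lemma asserting that $w_{a,\mu,p}$ minimizes $I_{\mu,p}$ on $\mathcal{P}(N,a,\mu,p)$ gives $I_{\mu_i+\beta,p_i}(u_i)\ge l(N,a_i,\mu_i+\beta,p_i)$, so it only remains to bound the bracketed term from below by $-c_1a_1-c_2a_2$.

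For the bracket I would first apply the pointwise weighted Young inequality $|u_1|^{r_1}|u_2|^{r_2}\le \tfrac{r_1}{r_1+r_2}|u_1|^{r_1+r_2}+\tfrac{r_2}{r_1+r_2}|u_2|^{r_1+r_2}$ and integrate. Next, since $2<r_1+r_2<2+\tfrac4N<p_i$, the characterization of $c_i$ recorded in \eqref{defc} yields $t^{r_1+r_2-2}-\tfrac{1}{p_i}t^{p_i-2}\le c_i$ for every $t\ge0$; multiplying by $t^2$ and choosing $t=|u_i(x)|$ gives $|u_i|^{r_1+r_2}\le\tfrac{1}{p_i}|u_i|^{p_i}+c_i|u_i|^2$ pointwise, hence, integrating and using $u_i\in S(a_i)$,
\[
\|u_i\|_{r_1+r_2}^{r_1+r_2}\le \tfrac{1}{p_i}\|u_i\|_{p_i}^{p_i}+c_ia_i .
\]
Combining this with the previous step and using $\tfrac{r_i}{r_1+r_2}<1$ together with the nonnegativity of every term on the right, we obtain $\int_{\R^N}|u_1|^{r_1}|u_2|^{r_2}\,dx\le \tfrac{1}{p_1}\|u_1\|_{p_1}^{p_1}+\tfrac{1}{p_2}\|u_2\|_{p_2}^{p_2}+c_1a_1+c_2a_2$, which is exactly the desired lower bound for the bracket.

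Putting the two estimates together yields, for every $(u_1,u_2)\in\mathcal{P}(N,a_1,\mu_1+\beta,p_1)\times\mathcal{P}(N,a_2,\mu_2+\beta,p_2)$,
\[
J(u_1,u_2)\ge l(N,a_1,\mu_1+\beta,p_1)+l(N,a_2,\mu_2+\beta,p_2)-\beta c_1a_1-\beta c_2a_2 .
\]
Taking the infimum over such $(u_1,u_2)$ and then invoking \eqref{defb0}, which holds since $0<\beta<\beta_0=\beta_1$, gives the claim. The only point needing a little care is matching the estimate of the coupling term to the precise constants $c_i$ — which is precisely why \eqref{defc} was introduced — while everything else is a routine interpolation-type manipulation combined with the minimality of $w_{a,\mu,p}$ on $\mathcal{P}(N,a,\mu,p)$.
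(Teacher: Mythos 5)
Your proof is correct and follows essentially the same route as the paper: bound the coupling term via Young's inequality and the pointwise inequality $t^{r_1+r_2}\le \tfrac{1}{p_i}t^{p_i}+c_it^2$ encoded in \eqref{defc}, absorb the $\tfrac{\beta}{p_i}\|u_i\|_{p_i}^{p_i}$ terms into $I_{\mu_i+\beta,p_i}$, use the minimality of $w_{a_i,\mu_i+\beta,p_i}$ on $\mathcal{P}(N,a_i,\mu_i+\beta,p_i)$, and conclude with \eqref{defb0}. The only difference is cosmetic: you introduce the decomposition in terms of $I_{\mu_i+\beta,p_i}$ at the outset, whereas the paper carries $I_{\mu_i,p_i}$ through the chain of inequalities and regroups at the end.
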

\begin{proof}
For any $(u_1, u_2) \in \mathcal {P}(N, a_1, \mu_1 + \beta, p_1) \times \mathcal {P}(N, a_2, \mu_2 + \beta, p_2)$, we have
\begin{align*}
J(u_1, u_2) & = I_{\mu_1, p_1}(u_1) + I_{\mu_2, p_2}(u_2) - \beta \int_{\R^N}|u_1|^{r_1}|u_2|^{r_2}\, dx \\
            & \geq I_{\mu_1, p_1}(u_1) + I_{\mu_2, p_2}(u_2) -
            \beta \sum_{i =1}^2 \int_{\R^N}|u_i|^{r_1 + r_2}\, dx \\
            & \geq I_{\mu_1, p_1}(u_1) + I_{\mu_2, p_2}(u_2) -
            \beta \sum_{i =1}^2 \int_{\R^N} c_i |u_i|^2 +  \frac {1}{p_i}|u_i|^{p_i}\, dx \\
            & = I_{\mu_1 + \beta, p_1}(u_1) + I_{\mu_2 + \beta, p_2}(u_2) - \beta c_1a_1-\beta c_2a_2\\
            & {\geq \inf_{u \in \mathcal{P}(a_1, \mu_1 + \beta, p_1)}I_{\mu_1 + \beta, p_1}(u)
            + \inf_{v \in \mathcal{P}(a_2, \mu_2 + \beta, p_2)}I_{\mu_2 + \beta, p_2}(v)} - \beta c_1a_1-\beta c_2a_2\\
            & = l(N, a_1, \mu_1 + \beta , p_1)+ l(N, a_2, \mu_2 + \beta, p_2) -\beta c_1a_1-\beta c_2a_,
\end{align*}
where $c_i$ for $i=1, 2$ are defined by \eqref{defc}. {Thus from \eqref{defb0}, it ends the proof.}
\end{proof}

Now for any given $ \beta \in (0,  \beta_0)$, according to Lemma \ref{nonzero}, we can fix {an $\eps >0$} such that
\begin{align} \label{defeps}
\begin{split}
&\inf \{J(u_1, u_2): (u_1, u_2) \in \mathcal {P}(N, a_1, \mu_1 + \beta, p_1) \times \mathcal {P}(N, a_2, \mu_2 + \beta, p_2)\} \\
&> \max\{l(N, a_1, \mu_1, p_1), l(N, a_2, \mu_2, p_2)\} + \eps.
\end{split}
\end{align}
We set
\begin{align} \label{w}
w_1 := w_{a_1, \mu_1 + \beta, p_1}, \,\, w_2 := w_{a_2, \mu_2 + \beta, p_2},
\end{align}

From these definitions and as in \cite[Lemma 3.3]{BJS}, one obtains the following result.
{
\begin{lem} \label{phipsi}
 For $i=1,2$, there exists $\rho_i <0$ and $R_i >0$ such that
\begin{itemize}
\item[(i)] $0 < I_{\mu_i, p_i}(\rho_i*w_i) < \eps$ and $I_{\mu_i, p_i}(R_i*w_i) \leq 0$, where $\eps>0$ is determined in \eqref{defeps};
\item[(ii)] $ \displaystyle \frac{\partial}{\partial s}I_{\mu_i +\beta, p_i}(s*w_i) >0 $ for $s = \rho_i $ and $\displaystyle \frac{\partial}{\partial s}I_{\mu_i +\beta, p_i}(s*w_i)   <0$ for $s =R_i$.
\end{itemize}
\end{lem}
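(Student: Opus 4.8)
The plan is to prove Lemma \ref{phipsi} by exploiting the explicit structure of the dilation $s * w$ together with the scaling identities already recorded in Lemma \ref{pro-ground}. The key observation is that for $w = w_i = w_{a_i,\mu_i+\beta,p_i}$ we have, by \eqref{psi1},
\[
I_{\mu_i,p_i}(s*w_i) = \frac{e^{2s}}{2}\,A_i - \frac{\mu_i}{p_i}\,e^{s(\frac{p_i}{2}-1)N}\,B_i, \qquad
I_{\mu_i+\beta,p_i}(s*w_i) = \frac{e^{2s}}{2}\,A_i - \frac{\mu_i+\beta}{p_i}\,e^{s(\frac{p_i}{2}-1)N}\,B_i,
\]
where $A_i := \|\nabla w_i\|_2^2 > 0$ and $B_i := \|w_i\|_{p_i}^{p_i} > 0$; these quantities are finite and positive, and are given explicitly by \eqref{normw}. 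Since $p_i > 2 + \frac4N$, the exponent $(\frac{p_i}{2}-1)N$ is strictly larger than $2$, so in each of the two functions the negative term dominates as $s \to +\infty$ and the positive term dominates as $s \to -\infty$. Hence $I_{\mu_i,p_i}(s*w_i) \to 0^+$ as $s \to -\infty$ and $I_{\mu_i,p_i}(s*w_i) \to -\infty$ as $s \to +\infty$, and likewise for $I_{\mu_i+\beta,p_i}$.

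For part $(ii)$, I would analyse $\frac{\partial}{\partial s}I_{\mu_i+\beta,p_i}(s*w_i)$ directly: by \eqref{psi} applied with $w = w_{a_i,\mu_i+\beta,p_i}$ (which is the ground state for the parameter $\mu_i+\beta$), this derivative equals $0$ at $s=0$, is strictly positive for $s<0$, and strictly negative for $s>0$. Thus any choice of $\rho_i < 0$ automatically satisfies $\frac{\partial}{\partial s}I_{\mu_i+\beta,p_i}(s*w_i)>0$ at $s=\rho_i$, and any choice of $R_i > 0$ gives the corresponding negative sign at $s=R_i$. So $(ii)$ imposes only the sign constraints $\rho_i < 0 < R_i$ and no further restriction.

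It then remains to pin down $\rho_i<0$ and $R_i>0$ so that $(i)$ holds. For $R_i$: since $I_{\mu_i,p_i}(s*w_i)\to-\infty$ as $s\to+\infty$ and the map is continuous, we may pick $R_i>0$ large enough that $I_{\mu_i,p_i}(R_i*w_i)\le 0$; this is compatible with the requirement $R_i>0$ from $(ii)$. For $\rho_i$: since $I_{\mu_i,p_i}(s*w_i)\to 0^+$ as $s\to-\infty$, and since for $s<0$ sufficiently negative the value stays strictly positive (the positive quadratic-in-$e^s$ term dominates), we have $0 < I_{\mu_i,p_i}(s*w_i)$ for all sufficiently negative $s$, and moreover $I_{\mu_i,p_i}(s*w_i)\to 0$, so there exists $\rho_i<0$ with $0 < I_{\mu_i,p_i}(\rho_i*w_i) < \eps$; again compatible with $\rho_i<0$ from $(ii)$. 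Choosing $\rho_i$ even more negative if necessary we can ensure both conditions simultaneously.

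The only genuinely delicate point — and the one I would state carefully rather than wave at — is the strict positivity $I_{\mu_i,p_i}(s*w_i) > 0$ for $s$ near $-\infty$: this follows because for $s \le 0$ the term $\frac{e^{2s}}{2}A_i$ decays like $e^{2s}$ while $\frac{\mu_i}{p_i}e^{s(\frac{p_i}{2}-1)N}B_i$ decays strictly faster (since $(\frac{p_i}{2}-1)N > 2$), so their difference is eventually positive; equivalently, $I_{\mu_i,p_i}(s*w_i) = e^{2s}\bigl(\tfrac12 A_i - \tfrac{\mu_i}{p_i}e^{s((\frac{p_i}{2}-1)N - 2)}B_i\bigr)$ and the bracket tends to $\tfrac12 A_i > 0$ as $s\to-\infty$. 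With this the existence of the required $\rho_i$ and $R_i$ is immediate, and the proof is complete. For brevity one may simply invoke \cite[Lemma 3.3]{BJS}, whose argument is identical with $\mu$ replaced by $\mu_i$ and $\mu_i+\beta$.
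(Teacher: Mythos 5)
Your proof is correct and follows essentially the same route as the paper, which simply invokes the elementary scaling argument of \cite[Lemma 3.3]{BJS}: you spell out the explicit form of $s\mapsto I_{\mu_i,p_i}(s*w_i)$ from \eqref{psi1}, use $(\tfrac{p_i}{2}-1)N>2$ to get the behaviour as $s\to\pm\infty$, and obtain (ii) for free from \eqref{psi} since $w_i=w_{a_i,\mu_i+\beta,p_i}$. The only difference is that you write out the details the paper leaves to the citation, including the correct observation that $I_{\mu_i,p_i}(s*w_i)$ stays strictly positive and tends to $0^+$ as $s\to-\infty$, which is exactly what is needed for the choice of $\rho_i$.
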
}
Let $M := [\rho_1, R_1] \times [\rho_2, R_2]$, and for $(t_1, t_2) \in M$,
$$
g_0(t_1, t_2): =(t_1*w_1, t_2*w_2) \in S(a_1, a_2).
$$
We now introduce the min-max class
$$
\Gamma:=\{g \in C(M, S(a_1, a_2)): g= g_0 { \, \, on \, \, {\partial M}}\}.
$$

\begin{lem}\label{abbdness}
If $g \in \Gamma$, there holds
$$
\sup_{\partial M}J(g) < \max\{{ l(N, a_1, \mu_1, p_1), l(N, a_2, \mu_2, p_2)}\} + \eps.
$$
\end{lem}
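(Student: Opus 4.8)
The plan is to reduce the estimate to the one-dimensional behaviour of $I_{\mu_i,p_i}$ along the scaling orbits of $w_1$ and $w_2$; the argument parallels \cite[Lemma 3.3]{BJS}. Since $g=g_0$ on $\partial M$ it suffices to bound $J(g_0(t_1,t_2))$ for $(t_1,t_2)\in\partial M$. I would start from the decomposition
\[
J(t_1*w_1,t_2*w_2)=I_{\mu_1,p_1}(t_1*w_1)+I_{\mu_2,p_2}(t_2*w_2)-\beta\int_{\R^N}|t_1*w_1|^{r_1}|t_2*w_2|^{r_2}\,dx,
\]
and discard the last term, which is nonnegative, to get $J(g_0(t_1,t_2))\le I_{\mu_1,p_1}(t_1*w_1)+I_{\mu_2,p_2}(t_2*w_2)$.

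The key step is to prove that, for $i=1,2$,
\[
\sup_{s\in\R}I_{\mu_i,p_i}(s*w_i)=l(N,a_i,\mu_i,p_i).
\]
By \eqref{psi1}, the map $s\mapsto I_{\mu_i,p_i}(s*w_i)=\tfrac12 e^{2s}\|\nabla w_i\|_2^2-\tfrac{\mu_i}{p_i}e^{s(\frac{p_i}{2}-1)N}\|w_i\|_{p_i}^{p_i}$ is of the form $s\mapsto ce^{2s}-de^{\alpha s}$ with $c,d>0$ and $\alpha=(\frac{p_i}{2}-1)N>2$ (here $p_i>2+\frac4N$); such a function has exactly one critical point $s_i^\ast$, which is its global maximum. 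Moreover, by the definition of $\mathcal{P}(N,a_i,\mu_i,p_i)$, vanishing of the derivative at $s_i^\ast$ is equivalent to $s_i^\ast*w_i\in\mathcal{P}(N,a_i,\mu_i,p_i)$. It then remains to identify $s_i^\ast*w_i$ with $w_{a_i,\mu_i,p_i}$. For this I would use the explicit formula \eqref{solw} to check that all the ground states $w_{a_i,\mu,p_i}$, $\mu>0$, of the scalar equation at mass $a_i$ lie on the single orbit $\{s*w_i:s\in\R\}$, the map sending $\mu$ to the corresponding dilation parameter being a bijection of $(0,\infty)$ onto $\R$ precisely because $p_i>2+\frac4N$. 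In particular $w_{a_i,\mu_i,p_i}$ belongs to this orbit, and since $w_{a_i,\mu_i,p_i}\in\mathcal{P}(N,a_i,\mu_i,p_i)$ it must coincide with the unique intersection point $s_i^\ast*w_i$. Hence $\sup_s I_{\mu_i,p_i}(s*w_i)=I_{\mu_i,p_i}(w_{a_i,\mu_i,p_i})=l(N,a_i,\mu_i,p_i)$, by definition of $l$.

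To conclude, observe that a point of $\partial M$ has at least one coordinate at an endpoint. If $t_1\in\{\rho_1,R_1\}$, then Lemma \ref{phipsi}(i) gives $I_{\mu_1,p_1}(t_1*w_1)<\eps$ (indeed $\le 0$ when $t_1=R_1$), so, combining with the previous step,
\[
J(g_0(t_1,t_2))\le I_{\mu_1,p_1}(t_1*w_1)+l(N,a_2,\mu_2,p_2)<\eps+\max\{l(N,a_1,\mu_1,p_1),l(N,a_2,\mu_2,p_2)\};
\]
the case $t_2\in\{\rho_2,R_2\}$ is symmetric. Since on each of the four sides of $\partial M$ this yields a fixed constant strictly below $\max\{l(N,a_1,\mu_1,p_1),l(N,a_2,\mu_2,p_2)\}+\eps$, the supremum of $J\circ g_0$ over $\partial M$ is itself strictly below this value, which is the assertion. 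I expect the main obstacle to be the middle paragraph, namely the identity $\sup_s I_{\mu_i,p_i}(s*w_i)=l(N,a_i,\mu_i,p_i)$: it rests on the observation that the $L^2$-constrained ground states of the scalar equation for different coupling constants are mutually related by dilations, together with the uniqueness of the intersection of a scaling orbit with the Pohozaev manifold $\mathcal{P}$; everything else is elementary once Lemma \ref{phipsi} is available.
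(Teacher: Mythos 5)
Your proof is correct and follows essentially the same route as the paper: restrict to $g_0$ on $\partial M$, drop the nonnegative interaction term, control the endpoint coordinate via Lemma \ref{phipsi}(i) and the free coordinate via $\sup_{s\in\R}I_{\mu_i,p_i}(s*w_i)$. The only difference is how the key identity $\sup_{s}I_{\mu_i,p_i}(s*w_i)=l(N,a_i,\mu_i,p_i)$ is justified: the paper computes it from the explicit norms \eqref{normw}, whereas you identify $w_{a_i,\mu_i,p_i}$ as the unique point of the dilation orbit of $w_i$ lying on $\mathcal{P}(N,a_i,\mu_i,p_i)$ (using \eqref{solw} and uniqueness of the normalized positive ground state), which is a valid and arguably cleaner way to see that this supremum does not depend on the coupling constant used to define $w_i$.
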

\begin{proof}\
In view of Lemma \ref{phipsi}, {for $t_1 \in [\rho_1, R_1]$,}
\begin{align} \label{est}
\begin{split}
J(t_1 * w_1, \rho_2 * w_2) & \leq I_{\mu_1, p_1}(t_1 * w_1) + I_{\mu_2, p_2}(\rho_2 * w_2)\\
                            & \leq I_{\mu_1, p_1}(t_1 * w_1) +\eps
                           \leq \sup_{s \in \R} I_{\mu_1, p_1}(s * w_1) + \eps\\
                             &= \left(\frac{\mu_1 + \beta}{\mu_1}\right)^{\frac{4}{4 - N(p_1-2)}}
                             \sup_{s \in \R} I_{\mu_1, p_1}(s * w_{a_1, \mu_1, p_1}) + \eps \\
                           & \leq l(N, a_1, \mu_1, p_1) + \eps.
\end{split}
\end{align}
{Note that in \eqref{est} the equality is obtained by using \eqref{normw}}. Consequently, for $ t_1 \in [\rho_1, R_1]$, { we deduce from \eqref{est} that}
$$
J(t_1 * w_1, \rho_2 * w_2) \leq l(N, a_1, \mu_1 , p_1) + \eps,
$$
and in a similar way, for $t_2 \in [\rho_2, R_2]$,
$$
J(\rho_1 * w_1, t_2 * w_2) \leq l(N, a_2, \mu_2, p_2) + \eps.
$$
On the other hand, one can show using Lemma \ref{phipsi} that for $t_1 \in [\rho_1, R_1]$,
\begin{align*}
J(t_1*w_1, R_2*w_2) &\leq I_{\mu_1, p_1}(t_1 * w_1) + I_{\mu_2, p_2}(R_2 * w_2) \\
&\leq \sup_{s \in \R} I_{\mu_1, p_1}(s * w_1)
\leq l(N, a_1, \mu_1, p_1).
\end{align*}
Analogously, for $t_2 \in [\rho_2, R_2]$,
$
J(R_1*w_1, t_2*w_2) \leq  {l(N, a_2, \mu_2, p_2)}
$
and the lemma follows.
\end{proof}

\begin{lem}\label{linking}
For every $g \in \Gamma,$ there exists $(t_1, t_2) \in M$ such that $g(t_1, t_2) \in \mathcal{P}(N, a_1, \mu_1+\beta, p_1) \times \mathcal{P}(N, a_2, \mu_2+\beta, p_2)$.
\end{lem}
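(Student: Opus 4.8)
The plan is to recast the statement as the existence of a zero of a finite-dimensional vector field and to conclude by a topological degree argument. For $a>0$, $\mu>0$ and $2+\frac4N<p<2^*$, set
\begin{align*}
P_{\mu,p}(u):=\int_{\R^N}|\nabla u|^2\,dx-\frac{\mu}{p}\Big(\frac p2-1\Big)N\int_{\R^N}|u|^p\,dx ,
\end{align*}
so that $u\in\mathcal{P}(N,a,\mu,p)$ if and only if $u\in S(a)$ and $P_{\mu,p}(u)=0$; moreover, comparing with Lemma \ref{pro-ground} one sees at once that $P_{\mu,p}(t*w)=\frac{\partial}{\partial s}I_{\mu,p}(s*w)\big|_{s=t}$ for every $w\in H^1(\R^N)$ and $t\in\R$. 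Given $g=(g_1,g_2)\in\Gamma$, I would introduce $F\colon M\to\R^2$ by
\begin{align*}
F(t_1,t_2):=\big(\,P_{\mu_1+\beta,p_1}(g_1(t_1,t_2)),\ P_{\mu_2+\beta,p_2}(g_2(t_1,t_2))\,\big),
\end{align*}
which is continuous since $g$ is continuous from $M$ into $S(a_1,a_2)\subset H^1(\R^N)\times H^1(\R^N)$ and each $P_{\mu_i+\beta,p_i}$ is continuous on $H^1(\R^N)$. The lemma is then equivalent to the assertion that $F$ vanishes at some point of $M$.

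Next I would analyse $F$ on $\partial M$, where $g=g_0$, i.e.\ $g_1(t_1,t_2)=t_1*w_1$ and $g_2(t_1,t_2)=t_2*w_2$. On the face $\{t_1=\rho_1\}$ the first component of $F$ equals $P_{\mu_1+\beta,p_1}(\rho_1*w_1)=\frac{\partial}{\partial s}I_{\mu_1+\beta,p_1}(s*w_1)\big|_{s=\rho_1}$, which is strictly positive by Lemma \ref{phipsi}(ii); symmetrically it is strictly negative on $\{t_1=R_1\}$, the second component of $F$ is strictly positive on $\{t_2=\rho_2\}$ and strictly negative on $\{t_2=R_2\}$. On each of these faces the relevant component is in fact a nonzero constant, so the sign conditions hold on the whole face and, in particular, $F\ne(0,0)$ on $\partial M$.

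Finally, these are precisely the hypotheses of the Poincar\'e--Miranda theorem on the rectangle $M=[\rho_1,R_1]\times[\rho_2,R_2]$; equivalently, by homotopy invariance the Brouwer degree of $F$ at $0$ relative to $M$ equals that of the product map $(t_1,t_2)\mapsto\big(\frac{\partial}{\partial s}I_{\mu_1+\beta,p_1}(s*w_1)\big|_{s=t_1},\ \frac{\partial}{\partial s}I_{\mu_2+\beta,p_2}(s*w_2)\big|_{s=t_2}\big)$, a product of two one-dimensional maps each changing sign from $+$ to $-$, hence of degree $\pm1\ne0$. Therefore $F(t_1,t_2)=(0,0)$ for some $(t_1,t_2)\in M$, that is $g(t_1,t_2)\in\mathcal{P}(N,a_1,\mu_1+\beta,p_1)\times\mathcal{P}(N,a_2,\mu_2+\beta,p_2)$. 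There is no serious obstacle here; the one place needing care is the boundary sign analysis, which reduces to Lemma \ref{phipsi}(ii), after which the conclusion is a routine application of finite-dimensional degree theory (and it is harmless to work in $H^1(\R^N)\times H^1(\R^N)$ rather than the radial subspace, since neither $\mathcal{P}(N,a_i,\mu_i+\beta,p_i)$ nor $I_{\mu_i+\beta,p_i}$ is affected by the radial constraint).
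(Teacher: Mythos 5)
Your proof is correct and follows essentially the same route as the paper: your map $F$ coincides with the paper's $F_g$ (since $P_{\mu_i+\beta,p_i}(g_i)=\frac{\partial}{\partial s}I_{\mu_i+\beta,p_i}(s*g_i)\big|_{s=0}$), and the zero is located through the boundary sign conditions provided by Lemma \ref{phipsi}(ii). The only difference is that the paper delegates the topological step to \cite[Lemma 3.5]{BJS}, whereas you carry out the Poincar\'e--Miranda/degree argument explicitly, which is precisely the content of that cited lemma.
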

\begin{proof}
Let $g \in \Gamma$ be arbitrary, we write $g(t_1, t_2) :=(g_1(t_1, t_2), g_2(t_1, t_2))$, and we introduce the map $F_{g}: M \rightarrow \R^2$ as,
$$
F_{g}(t_1, t_2):= \left(\frac{\partial}{\partial s}I_{\mu_1 + \beta, p_1}(s*g_1(t_1, t_2))|_{s=0},
                            \frac{\partial}{\partial s}I_{\mu_2 + \beta, p_2}(s*g_2(t_1, t_2))|_{s=0}
                        \right).
$$
Since
\begin{align*}
\frac{\partial}{\partial s}I_{\mu_i + \beta, p_i}(s*g_i(t_1, t_2))|_{s=0}
=\int_{\R^N}|\nabla g_i(t_1, t_2)|^2 \, dx - \frac{\mu_i}{p_i}\left(\frac {p_i} {2} -1\right)N \int_{\R^N}|g_i(t_1, t_2)|^{p_i} \, dx.
\end{align*}
we deduce that $F_g(t_1,t_2) = (0,0)$ if and only if $g(t_1,t_2) \in \mathcal{P}(N, a_1, \mu_1+\beta, p_1) \times \mathcal{P}(N, a_2, \mu_2+\beta, p_2)$. To show that $F_{g}(t_1, t_2)= 0$ has a solution we can exactly follow the proof given in \cite[Lemma 3.5]{BJS}.
\end{proof}

At this point, we know from Lemmas \ref{nonzero}, \ref{abbdness} and  \ref{linking}, that there exists a Palais-Smale sequence for $J$ restricted to $S(a_1,a_2)$ at the level
\begin{align}\label{gam}
c(a_1, a_2):= \inf_{g \in \Gamma} \max_{(t_1, t_2) \in M}J(g(t_1, t_2))
           > \max \{l(N, a_1, \mu_1, p_1), l(N, a_2, \mu_2, p_2)\}.
\end{align}
In addition, arguing as in the proof of Theorem \ref{thm1}(ii), we obtain the following result.

\begin{lem} \label{linkps}
For any $0 < \beta < \beta_0$, there exists a Palais-Smale sequence $\{(u_1^n, u_2^n)\} \subset S_{rad}(a_1,a_2)$ for $J$ restricted to $S_{rad}(a_1,a_2)$ at the level $c(a_1, a_2)$,
which satisfies  $(u_1^n)^- \to 0$, $(u_2^n)^- \to 0$ in $H^1(\R^N)$ and the property  $Q(u_1^n, u_2^n) \to 0$ as $n \to \infty$.
\end{lem}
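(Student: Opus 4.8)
The plan is to reproduce, now in the radially symmetric setting, the standard augmented-functional argument of \cite{Jeanjean,BJ,BJS} already invoked for Lemma \ref{lem121}, replacing there the mountain-pass geometry by the linking geometry built in Lemmas \ref{phipsi}--\ref{linking}. The device is to promote the dilation to a free variable: define $\tilde J:\R\times S_{rad}(a_1,a_2)\to\R$ by
\[
\tilde J(s,u_1,u_2):=J(s*u_1,s*u_2),\qquad (s*u)(x):=e^{\frac{Ns}{2}}u(e^sx),
\]
so that $s*u=u^{e^s}$ with the notation of \eqref{underscale}. Differentiating the scaling identity \eqref{underscale} gives $\partial_s\tilde J(s,u_1,u_2)=Q(s*u_1,s*u_2)$, and, as in Lemma \ref{pro-ground}, $\tilde J$ is $C^1$ on the Hilbert manifold $\R\times S_{rad}(a_1,a_2)$, so a deformation argument can be carried out there.

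I would then transport the linking to $\tilde J$. Set $\tilde\Gamma:=\{\tilde g\in C(M,\R\times S_{rad}(a_1,a_2)):\tilde g=(0,g_0)\text{ on }\partial M\}$ and $\tilde c(a_1,a_2):=\inf_{\tilde g\in\tilde\Gamma}\max_{M}\tilde J(\tilde g)$. The maps $g\mapsto(0,g)$ (sending radial paths of $\Gamma$ into $\tilde\Gamma$) and $(\sigma,g_1,g_2)\mapsto(\sigma*g_1,\sigma*g_2)$ (sending $\tilde\Gamma$ into $\Gamma$) both preserve the value of the functional along the path, since $\tilde J(\sigma,g_1,g_2)=J(\sigma*g_1,\sigma*g_2)$, and both reduce to the identity on $\partial M$; hence $\tilde c(a_1,a_2)=c(a_1,a_2)$, the level in \eqref{gam}. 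Moreover the linking of Lemma \ref{linking} survives: applying that lemma to the projected path $(\sigma*g_1,\sigma*g_2)\in\Gamma$ produces $(t_1,t_2)\in M$ with this path in $\mathcal{P}(N,a_1,\mu_1+\beta,p_1)\times\mathcal{P}(N,a_2,\mu_2+\beta,p_2)$, so, by Lemmas \ref{nonzero}--\ref{abbdness} and \eqref{gam}, every $\tilde g\in\tilde\Gamma$ satisfies $\max_M\tilde J(\tilde g)>\sup_{\partial M}\tilde J(\tilde g)$. Thus $\tilde c(a_1,a_2)$ is a genuine min-max value protected by a linking boundary.

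The sign information is built in as in \cite{BJ,BJS}, using that $\tilde J(s,|u_1|,|u_2|)\le\tilde J(s,u_1,u_2)$ while the negative parts of $|u_i|$ vanish: the quantitative deformation lemma (equivalently, Ekeland's principle applied to the min-max over $\tilde\Gamma$) may then be run while staying in a neighbourhood of the cone of pairs with vanishing negative part, producing a sequence $\{(s_n,u_1^n,u_2^n)\}\subset\R\times S_{rad}(a_1,a_2)$ with $\tilde J(s_n,u_1^n,u_2^n)\to c(a_1,a_2)$, $\partial_s\tilde J(s_n,u_1^n,u_2^n)\to0$, $(\tilde J_{|\{s_n\}\times S_{rad}(a_1,a_2)})'(u_1^n,u_2^n)\to0$, and $(u_1^n)^-,(u_2^n)^-\to0$ in $H^1(\R^N)$. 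Setting $v_i^n:=s_n*u_i^n$ gives $J(v_1^n,v_2^n)\to c(a_1,a_2)$, $Q(v_1^n,v_2^n)=\partial_s\tilde J(s_n,u_1^n,u_2^n)\to0$ and $(v_i^n)^-=s_n*(u_i^n)^-$.

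The main obstacle is the passage back from the augmented functional to $J$ restricted to $S_{rad}(a_1,a_2)$, namely showing $(J_{|S_{rad}(a_1,a_2)})'(v_1^n,v_2^n)\to0$ and $(v_i^n)^-\to0$ in $H^1(\R^N)$; this requires the boundedness of $\{s_n\}$, after which the change-of-variable rule for the constrained differential under a dilation (see \cite{Jeanjean}) applies and $\|(v_i^n)^-\|_{H^1}\le C\|(u_i^n)^-\|_{H^1}\to0$. The boundedness of $\{s_n\}$ follows from the same coercivity estimate used in the boundedness step of Lemma \ref{lem122}: writing, for small $\varepsilon>0$, $J=\tfrac{\varepsilon}{2}\|\nabla\cdot\|_2^2+(\text{lower-order terms})+\tfrac{1-\varepsilon}{2}Q$ and absorbing the lower-order terms into $\varepsilon\|\nabla\cdot\|_2^2$ via \eqref{ga1} --- with the roles of the $L^2$-supercritical terms $|u_i|^{p_i}$ and the $L^2$-subcritical term $|u_1|^{r_1}|u_2|^{r_2}$ interchanged with respect to Lemma \ref{lem122}, since under $(H_1)$ one has $p_i>2+\tfrac4N$ and $r_1+r_2<2+\tfrac4N$ --- one obtains that $\{(v_1^n,v_2^n)\}$ is bounded in $H^1(\R^N)\times H^1(\R^N)$; together with $c(a_1,a_2)>0$ this controls $\{s_n\}$, exactly as in \cite{BJ,BJS}. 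Relabelling $(v_1^n,v_2^n)$ as $(u_1^n,u_2^n)$ completes the argument.
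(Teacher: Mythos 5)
Your overall strategy --- promoting the dilation to an extra variable, transporting the linking of Lemmas \ref{nonzero}--\ref{linking} to the augmented functional $\tilde J(s,u_1,u_2)=J(s*u_1,s*u_2)$, and reading off $Q(u_1^n,u_2^n)\to0$ from $\partial_s\tilde J$ --- is exactly the route the paper intends; the paper does not write this proof out but delegates it to \cite{Jeanjean,BJ,BJS}, as it already did for Lemma \ref{lem121}. The identity $\partial_s\tilde J(s,u_1,u_2)=Q(s*u_1,s*u_2)$, the identification of the min--max levels (modulo the radial versus non-radial class, a point the paper also leaves implicit), the preservation of the linking inequality through the projection $(\sigma,g_1,g_2)\mapsto(\sigma*g_1,\sigma*g_2)$ combined with Lemmas \ref{nonzero} and \ref{abbdness}, and the coercivity estimate giving boundedness of $(v_1^n,v_2^n)$ under $(H_1)$ (with the roles of the $p_i$-terms and of the interaction term exchanged with respect to Lemma \ref{lem122}) are all correct.

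The gap is in your last step, the control of the dilation parameters $s_n$. Boundedness of $(v_1^n,v_2^n)=(s_n*u_1^n,s_n*u_2^n)$ together with $c(a_1,a_2)>0$ only yields $\delta\le e^{2s_n}\bigl(\|\nabla u_1^n\|_2^2+\|\nabla u_2^n\|_2^2\bigr)\le C$; since elements of $S_{rad}(a_1,a_2)$ admit no a priori positive lower bound (nor upper bound) on their gradient norms, this bounds the product but not $s_n$ itself. Consequently neither the transfer of the constrained differential (which needs $s_n$ bounded below, because $\|(-s_n)*\psi\|_{H^1}^2=e^{-2s_n}\|\nabla\psi\|_2^2+\|\psi\|_2^2$) nor the smallness of $(v_i^n)^-=s_n*(u_i^n)^-$ in $H^1$ (which needs $s_n$ bounded above) follows from what you have established. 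This is also not how \cite{BJ,BJS} obtain the control of $s_n$: there the min--max principle (Ekeland, or the minimax theorem of \cite{Gh}) is applied along a minimizing sequence of sets of the form $\{0\}\times g_n(M)$ with $g_n$ radial and with nonnegative components, and it produces a Palais--Smale sequence $(s_n,u_1^n,u_2^n)$ for $\tilde J$ at distance $o(1)$ from these sets; it is this localization that forces $s_n\to0$ (and simultaneously yields $(u_i^n)^-\to0$). You already invoke this localization for the sign information, so the repair is simply to use the same device for the $s$-component; as written, however, the claim that coercivity plus $c(a_1,a_2)>0$ controls $\{s_n\}$ is unjustified.
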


\begin{proof}[Proof of Theorem \ref{thm2}$(ii)$]
Let $\{(u_1^n, u_2^n)\} \subset S_{rad}(a_1,a_2)$ be given by Lemma \ref{linkps}. Then there exists $u_1, u_2 \geq 0$ such that, up to a subsequence, $(u_1^n, u_2^n) \rightharpoonup (u_1, u_2)$ in $H^1(\R^N) \times  H^1(\R^N)$ and $(u_1^n, u_2^n) \rightarrow (u_1, u_2)$ in $L^p(\R^N) \times L^p(\R^N)$ for $2 < p < 2^*$, $N \geq 2$. It follows as before that $(u_1, u_2)$ is a weak solution to \eqref{sys1} for some $(\lambda_1, \lambda_2) \in \R^2$ and thus $Q(u_1, u_2)=0$. Since $Q(u_1^n, u_2^n) = o(1)$, we deduce that
$ \int_{\R^N} |\nabla u_1^n|^2 + |\nabla u_2^n|^2 \, dx \to \int_{\R^N} |\nabla u_1|^2 + |\nabla u_2|^2 \, dx$. There results that
$J(u_1, u_2)=c(a_1, a_2)>0 $ and in particular $(u_1, u_2) \neq (0, 0)$. It remains to prove that $(u_1, u_2)\in S(a_1, a_2)$. From Lemma \ref{la}, we may suppose $\lambda_1 <0,$ and thus
$u_1 \in S(a_1).$ If $\lambda_2 <0$ we also have that $u_2 \in S(a_2)$. If we assume $\lambda_2 \geq 0$, then
$$
- \Delta u_2 = \lambda_2 u_2 + \mu_2 u_2^{p_2-1} + \beta r_2 u_1^{r_1}u_2^{r_2 -1} \geq 0,
$$
and applying Lemma \ref{liouville}, it follows that $u_2 =0$. Therefore $Q(u_1, 0)=0$, namely $u_1 \in \mathcal{P}(N, a_1, p_1, \mu_1)$ and this implies that
$$
c(a_1, a_2)= J(u_1, 0) = \frac 12 \int_{\R^N}|\nabla u_1|^2 \,dx - \frac{\mu_1}{p_1} \int_{\R^N}|u_1|^{p_1}\,dx
           = l(N, a_1, \mu_1, p_1),
$$
in contradiction with \eqref{gam}. Knowing that $(u_1,u_2) \in S(a_1,a_2)$, we conclude as previously.
\end{proof}

\section{Appendix}\label{app}
\begin{proof}[Proof of Lemma \ref{lem124}]

To begin with, we set for $i=1 ,2$,
\begin{equation*}
a:= \int_{\R^N} |\nabla u_1|^2 + |\nabla u_2|^2 \, dx, \quad
b_i:= \frac{\mu_i}{p_i} \int_{\R^N} |u_i|^{p_i} \,dx \quad
c:=  \beta \int_{\R^N} |u_1|^{r_1}|u_2|^{r_2} \, dx,
\end{equation*}
Thus defining, for $t > 0$, $\theta(t):= J(u_1^t, u_2^t)$, we then have
\begin{equation}\label{theta}
\theta(t) :=  a\, \frac{t^2}{2}  - \sum_{i =1}^2  \, b_i t^{\tilde{p}_i} - c \,  t^r,
\end{equation}
where we have set, for $i=1,2$,
$$ \tilde{p}_i := \Big(\frac{p_i}{2}-1\Big) N, \quad \mbox{and} \quad  r:= \Big(\frac{r_1 + r_2}{2}-1\Big)N.$$
Note that, under $(H_0)$, $\tilde{p}_i \in (0,1)$ if $2 <p_i < 2 + \frac{2}{N}$,  $\tilde{p}_i \in (1,2)$ if $p_i > 2 + \frac{2}{N}$, for $i =1,2$ and $r >2$.

To prove the lemma, it suffices to show that $\theta'$ admits at most two zeros on $(0 ,\infty)$. This is clearly equivalent to show that $g(t):= \frac{\theta'(t)}{t^{\alpha}}$ for $t >0$, and for a $\alpha \in \R$ to be chosen later, has at most two zeros. Note that it is not restrictive to assume that $p_1 \leq p_2$.
We have
$$
g(t) = a t^{1- \alpha} - b_1 \,  \tilde{p}_1 \,  t^{\tilde{p}_1 - 1 - \alpha} - b_2 \,  \tilde{p}_2 \, t^{\tilde{p}_2 - 1 - \alpha} - c \, r \, t^{r-1 - \alpha}.
$$
Thus
\begin{align*}
g'(t)  &=  a\, \big(1 - \alpha \big) t^{- \alpha} - b_1 \,  \tilde{p}_1 \big(\tilde{p}_1 - 1 - \alpha \big)  t^{\tilde{p}_1 - 2 - \alpha}
            \\
             &- b_2  \, \tilde{p}_2 \big(\tilde{p}_2 - 1 - \alpha \big)   \, t^{\tilde{p}_2 - 2 - \alpha} - c r\, \big(r-1 - \alpha \big)  \,  t^{r-2 - \alpha},
\end{align*}
and
\begin{align*}
g''(t)  &=  a\, (1 - \alpha)(- \alpha) t^{- \alpha -1} - b_1 \,  \tilde{p}_1 (\tilde{p}_1 - 1 - \alpha) (\tilde{p}_1 - 2 - \alpha)  t^{\tilde{p}_1 - 3 - \alpha}
            \\
             &- b_2 \,  \tilde{p}_2 (\tilde{p}_2 - 1 - \alpha) (\tilde{p}_2 - 2 - \alpha)  t^{\tilde{p}_2 - 3 - \alpha} -c \, r (r -1 - \alpha) (r-2 - \alpha) t^{r-3 - \alpha}.
\end{align*}
For convenience, we write
\begin{equation}\label{useful}
 g''(t) = \alpha_0 \, t^{- \alpha -1} -   \alpha_1 \,  t^{\tilde{p}_1 - 3 - \alpha} -  \, \alpha_2 \,  t^{\tilde{p}_2 - 3 - \alpha} -  \alpha_3 \,  t^{r-3 - \alpha},
\end{equation}
where we have set $\alpha_0:=a\, (1 - \alpha)(- \alpha)$, $\alpha_i:=b_i \,  \tilde{p}_i (\tilde{p}_i - 1 - \alpha) (\tilde{p}_i - 2 - \alpha),$ for $i=1, 2,$ and $ \alpha_3:= c \, r (r -1 - \alpha) (r-2 - \alpha).
$
We now consider the following two cases.  \medskip \\
{\it Case 1:} $ 2< p_1 \leq p_2 \leq r_1 + r_2 - \frac 2N$. If we assume that $\tilde{p}_2 \leq 1$, namely that $p_2 \leq 2 + \frac{2}{N}$,  then setting $\alpha =0$, we get that $\alpha_0 =0$, $\alpha_1  \leq 0$, $\alpha_2 \leq  0$ and $\alpha_3 >0$. Thus $g''(t) <0$, for any $t>0$ and we deduce that $g'$ is strictly decreasing. It follows that $g$ cannot have more than two zeros. Now if we assume that $\tilde{p}_2 > 1$ we choose $\alpha = \tilde{p}_2 -1 \in (0,1)$. Then $g''(t)$ becomes
$$g''(t) = \alpha_0 \,  t^{- \tilde{p}_2} -  \alpha_1 \,  t^{\tilde{p}_1 - \tilde{p}_2 -2} - \alpha_3 \,  t^{r - \tilde{p}_2}$$
with $\alpha_0 <0 $ and $\alpha_1 >0$. Also under our assumption we have $r \geq \tilde{p}_2 +1$ and we obtain that $\alpha_3 \geq 0$. Thus $g''(t) <0$ for any $t >0$, and we conclude as in the first case. \medskip \\
{\it Case 2:} $|p_1-p_2| \leq \frac 2N$. In view of the first case  we can assume that $\tilde{p}_2 >1$. We now write (\ref{useful}) as
$$ g''(t) =  t^{- \alpha -1} \Big[\alpha_0  -   \alpha_1 \, t^{\tilde{p}_1 - 2} -  \, \alpha_2 \,  t^{\tilde{p}_2 - 2 } -  \alpha_3 \,  t^{r-2}\Big] : = t^{- \alpha -1} \xi(t).$$
Let us prove that, for a convenient choice of $\alpha \leq 0$ we can insure that $\xi(t)$ is a strictly decreasing function.
Recall that we assume that $p_1 \leq p_2$. Since $|p_1-p_2| \leq \frac 2N$, it implies that $\tilde{p}_2 \leq \tilde{p}_1 + 1$ and thus we can choose a $\alpha \leq 0$ satisfying $\tilde{p}_2-2 \leq \alpha \leq  \tilde{p}_1-1 $. With this choice $\alpha_1 \leq 0, \alpha_2 \leq 0$, and $\alpha_3 >0$ because of $r >2$. It follows that $\xi$ is strictly decreasing on $(0, \infty)$.

Now having proved that $\xi$ is strictly decreasing and since $\lim_{t \to 0^+}\xi(t) >0$ and $\lim_{t \to \infty}\xi(t)=- \infty$, there exists exactly  one $t_1 > 0$ satisfying $\xi(t_1) = 0$.
Thus $g'(t)$ is strictly increasing on $(0, t_1)$, and strictly decreasing on $[t_1, \infty)$. Also we can check that $\lim_{t \to 0^+}g'(t)  <0$ and
$\lim_{t \to \infty}g'(t) = -\infty$. At this point we can  assume without restriction that
\begin{equation}\label{worst}
\max_{t > 0} g'(t) > 0.
\end{equation}
Otherwise, since $\lim_{t \to 0^+}g(t) < 0$, then $g(t) < 0$ for $t >0$, and  $g$ has no zero on $(0 , \infty)$,

 From (\ref{worst}) and the limits of $g'(t)$ we deduce that there are exactly two values $t_2 < t_3$ such that
$
g'(t_2) = g'(t_3) = 0.
$
In addition $0 < t_2 < t_1 <t_3$. Clearly $g$ is decreasing on $ (0, t_2) \cup [t_3, \infty)$, and increasing on  $[t_2, t_3)$. Recording that $\lim_{t \to 0}g(t) = 0^-$ it implies that $g$ may have at most two zeros.
\end{proof}

{\sc Address of the authors:}\\[1em]
\begin{tabular}{ll}
Tianxiang Gou & Louis Jeanjean\\
Laboratoire de Math\'ematiques (UMR 6623) & Laboratoire de Math\'ematiques (UMR 6623)\\
Universit\'{e} Bourgogne Franche-Comt\'{e} & Universit\'{e} Bourgogne Franche-Comt\'{e}\\
16, Route de Gray & 16, Route de Gray\\
25030 Besan\c{c}on Cedex & 25030 Besan\c{c}on Cedex\\
France & France\\
School of Mathematics and Statistics & louis.jeanjean@univ-fcomte.fr\\
Lanzhou University, Lanzhou, Gansu 730000\\
People's Republic of China\\
gou.tianxiang@gmail.com
\end{tabular}

\end{document}